\documentclass[12pt]{article}
 \textwidth=125mm
  \textheight=195mm
\usepackage{float}
\usepackage{color}
\usepackage{graphics}
\usepackage{graphicx}
\usepackage[all]{xy}
\usepackage[utf8]{inputenc}
\usepackage[english]{babel}

\usepackage[T1]{fontenc}

\usepackage[a4paper]{geometry}
\geometry{hscale=0.70,vscale=0.85,centering}

\usepackage{amsmath}
\usepackage{amssymb}
\usepackage{amsthm}
\usepackage{amsfonts}

    \makeatletter
    \renewcommand
    {\section}{\@startsection{section}{1}{0mm}
    {2\baselineskip}{0.25\baselineskip}
    {\normalfont\large\slshape\bfseries}}
    \makeatother

\usepackage{titlesec}
\titlelabel{\thetitle.\quad}

\title{Mapping class group dynamics on $\mathrm{Aff}(\mathbb{C})$-characters.}
\author{Selim Ghazouani \\
selim.ghazouani@ens.fr}

\date{}

\newtheorem{theorem}{Theorem}
\newtheorem{lemme}[theorem]{Lemma}
\newtheorem{proposition}[theorem]{Proposition}
\newtheorem{definition}[theorem]{Definition}

\newtheorem{conjecture}[theorem]{Conjecture}
\newtheorem{corollary}[theorem]{Corollary}

\begin{document}

\maketitle

\begin{center}
DMA - Ecole Normale Supérieure 
\\ 45, rue d'Ulm 
\\ 75230 Paris Cedex 05 - France

\end{center}

\begin{abstract}

We prove that in genus greater than $2$, the mapping class group action on $\mathrm{Aff}(\mathbb{C})$-characters is ergodic. This implies that almost every representation $\pi_1 S \longrightarrow \mathrm{Aff}(\mathbb{C})$ is the holonomy of a branched affine structure on $S$, when $S$ is a closed orientable surface of genus $g \geq 2$.

\vspace{2mm}

\noindent MSC Classification.  Primary : 22D40, Secondary :  20F39, 57M05
\\ Key words : ergodic theory, mapping class group, Torelli group, character variety, complex affine group, complex branched affine structure.
\end{abstract}

\tableofcontents

\newpage

\section*{Introduction}

Let $\Gamma$ be the fundamental group of a compact orientable surface $S$ of genus $g\geq 2$. If $G$ is a finite dimensional reductive Lie group (typically  $G = \mathrm{PSL}(2,\mathbb{R})$ or $\mathrm{SU}(2)$), one can look at the character variety $\chi(\Gamma, G)$ which is defined to be the quotient $\mathrm{Hom}(\Gamma, G) // G$, in the sense of geometric invariant theory. The mapping class group of  $S$ acts on $\chi(\Gamma, G)$  by precomposition, the study of this action was popularized by Goldman in the early 80's. 
\noindent The most classical result in the field, by Goldman, is  that the action is ergodic for $G = \mathrm{SU}(2)$ (see \cite{Gold}). This result was extended  by Pickrell and Xia to the case where $G$ is compact, see \cite{PX}. In this paper we study the case $G = \mathrm{Aff}(\mathbb{C}) = \{ z \mapsto az + b \ | \ (a,b) \in \mathbb{C}^* \times \mathbb{C} \}$.  Since $\mathrm{Aff}(\mathbb{C}) $ is solvable, the tools from symplectic geometry developed in the reductive case do not apply in our setting. Moreover, the character variety is not defined, at least in the sense of geometric invariant theory. This last difficulty can be avoided by defining $\chi (\Gamma, \mathrm{Aff}(\mathbb{C}) ) $ to be the quotient of $\mathrm{Hom}(\Gamma, \mathrm{Aff}(\mathbb{C})) \setminus \{ \text{abelian} \ \text{representations}\}$ by the action of $G$ by conjugation (see Section \ref{charactervariety}).

 $\chi(\Gamma, \mathrm{Aff}(\mathbb{C}) )$ has a structure of fiber bundle. It comes from the isomorphism $\mathrm{Aff}(\mathbb{C}) \simeq \mathbb{C}^* \ltimes \mathbb{C}$, a representation $\rho : \Gamma \longrightarrow \mathrm{Aff}(\mathbb{C}) $ is the data of a linear part $\alpha : \Gamma \longrightarrow \mathbb{C}^*$ and a translation part $\lambda : \Gamma \longrightarrow \mathbb{C}$ ($\rho= (\alpha, \lambda) : \Gamma \rightarrow \mathbb{C}^* \ltimes \mathbb{C} $)  , where $\alpha$ is a group homomorphism and $\lambda$ is a cocyle relation twisted by $\alpha$. A point in the quotient space will be parametrized by an element in $\mathrm{H}^1(S, \mathbb{C}^*) \simeq (\mathbb{C}^*)^{2g}$ (the linear part) and an element in the projectivized space of $\mathrm{H}^1_{\alpha}(\Gamma, \mathbb{C}^*) \simeq \mathbb{CP}^{2g-3}$ (the translation part), and this parametrization gives the fiber bundle structure.

In the case where $G = \mathbb{C}$ (the simplest non reductive case), the character variety is $ \mathrm{H}^1(S, \mathbb{C}) \simeq \mathbb{C}^{2g}$. The action of the mapping class group on  $\mathrm{H}^1(S, \mathbb{C})$ (which happen to factor through the linear action of $\mathrm{Sp}(2g, \mathbb{Z})$ on $\mathbb{C}^{2g}$) has an invariant non constant continuous function, $\omega \longmapsto \omega \wedge \overline{\omega} \in  \mathrm{H}^2(S, \mathbb{R}) \simeq \mathbb{R}$. Hence this action is not ergodic. (A careful study of this action has been carried out by M.Kapovich in \cite{HK}). The main result of our paper is   

\begin{theorem}

The mapping class group action on $\chi(\Gamma, \mathrm{Aff}(\mathbb{C}) )$ is ergodic.

\end{theorem}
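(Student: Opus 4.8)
The plan is to exploit the fiber bundle structure of $\chi(\Gamma,\mathrm{Aff}(\mathbb{C}))$ together with the short exact sequence
$$1 \longrightarrow \mathcal{I} \longrightarrow \mathrm{Mod}(S) \longrightarrow \mathrm{Sp}(2g,\mathbb{Z}) \longrightarrow 1,$$
where $\mathcal{I}$ is the Torelli group. The projection to the linear part $\rho=(\alpha,\lambda)\mapsto \alpha$ is $\mathrm{Mod}(S)$-equivariant, and the action on the base $\mathrm{H}^1(S,\mathbb{C}^*)\simeq(\mathbb{C}^*)^{2g}$ factors through $\mathrm{Sp}(2g,\mathbb{Z})$. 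Crucially, the Torelli group acts trivially on $\mathrm{H}_1(S)$ and therefore fixes each $\alpha$, so it preserves every fiber $\mathbb{P}\big(\mathrm{H}^1_\alpha(\Gamma,\mathbb{C})\big)\simeq\mathbb{CP}^{2g-3}$ and acts there $\mathbb{C}$-linearly (projectively). I would then prove ergodicity in two independent stages and glue them: (i) for almost every $\alpha$ the Torelli action on the fiber is ergodic, and (ii) the induced $\mathrm{Sp}(2g,\mathbb{Z})$-action on the base is ergodic. Given (i) and (ii), a standard fibered-ergodicity argument closes the proof: an invariant measurable function $f$ is, by (i), almost everywhere constant along fibers, hence descends to a measurable $\mathrm{Sp}(2g,\mathbb{Z})$-invariant function on the base, which by (ii) is constant.

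For the base, write $\mathbb{C}^*\simeq \mathbb{R}_{>0}\times U(1)$, so that $(\mathbb{C}^*)^{2g}\simeq \mathrm{H}^1(S,\mathbb{R})\times\mathrm{H}^1(S,\mathbb{R}/2\pi\mathbb{Z})$ with $\mathrm{Sp}(2g,\mathbb{Z})$ acting diagonally: linearly on the Euclidean factor and by toral automorphisms on the compact factor. Ergodicity of $\mathrm{Sp}(2g,\mathbb{Z})$ on the torus $\mathbb{T}^{2g}$ is classical (there is no invariant rational subtorus), and its linear action on $\mathbb{R}^{2g}\setminus\{0\}$ is ergodic for Lebesgue measure. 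The point requiring attention is that ergodicity of each factor does not formally yield ergodicity of the diagonal action; I would obtain the latter by the same descent device, fibering over the Euclidean factor and using toral ergodicity fiberwise. This stage is also where the contrast with the $G=\mathbb{C}$ case becomes visible: the nontriviality of the multiplicative coordinate $\alpha$ removes the continuous invariant $\omega\wedge\overline{\omega}$ that obstructs ergodicity there.

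The heart of the argument is stage (i). The Torelli group acts on the $(2g-2)$-dimensional space $\mathrm{H}^1_\alpha(\Gamma,\mathbb{C})$, and I must show that this representation is rich enough that the projective action on $\mathbb{CP}^{2g-3}$ is ergodic for almost every $\alpha$. The plan is to compute the image explicitly: separating twists, bounding-pair maps, and point-pushing classes act on twisted cohomology as transvection-type transformations whose directions and eigenvalue ratios vary with $\alpha$. I would show that, for $\alpha$ outside a set of measure zero, these generate a subgroup whose closure is large enough — Zariski dense in $\mathrm{SL}(2g-2,\mathbb{C})$, or dense in the real form preserving the natural cup-product pairing $\mathrm{H}^1_\alpha\otimes\mathrm{H}^1_{\alpha^{-1}}\to\mathbb{C}$ — to force ergodicity on projective space via a Moore-type criterion. \emph{This is the step I expect to be the main obstacle}: controlling the image of the Torelli group in the twisted cohomology and converting its largeness into genuine measure-theoretic ergodicity on $\mathbb{CP}^{2g-3}$. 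The low-genus case $g=2$, where the fiber degenerates to $\mathbb{CP}^1$, the cohomological form is smallest, and $\mathcal{I}$ is only generated by separating twists, will likely demand separate treatment and is presumably why the sharp statement is stated in higher genus.
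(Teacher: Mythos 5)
Your skeleton --- fiber $\chi$ over the linear part, let the Torelli group act in the fibers $\mathrm{PH}^1_{\alpha}(\Gamma,\mathbb{C})$, and glue fiberwise ergodicity to base ergodicity by a Fubini/descent argument --- is exactly the paper's. But both of your two stages contain genuine gaps. For the base, your proposed fix for the diagonal action, ``fibering over the Euclidean factor and using toral ergodicity fiberwise,'' does not work: the descent device requires that the subgroup of $\mathrm{Sp}(2g,\mathbb{Z})$ preserving the fiber over almost every $x\in\mathbb{R}^{2g}$ act ergodically on that fiber, and this subgroup is the stabilizer of $x$, which is trivial for generic $x$ (and likewise $(\gamma-I)x\in\mathbb{Z}^{2g}$ forces $\gamma=I$ for a generic point of $\mathbb{T}^{2g}$). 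Ergodicity of the full group on the torus is of no use fiberwise. The paper's actual argument is different: lift $\mathbb{T}^{2g}\times\mathbb{R}^{2g}$ to $\mathbb{R}^{2g}\times\mathbb{R}^{2g}$, observe that $\mathrm{Sp}(2g,\mathbb{R})$ acts transitively on each nonzero level set of the symplectic pairing $\omega(x,y)$ with noncompact closed stabilizers, apply Moore's theorem to get ergodicity of $\mathrm{Sp}(2g,\mathbb{Z})$ on each level set, and then use the $\mathbb{Z}^{2g}$-periodicity of the lifted invariant set together with a Lebesgue density point argument to show that the set of levels it occupies is all of $\mathbb{R}$.

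For the fibers you correctly flag the main obstacle but supply no mechanism, and the one you gesture at (Zariski density of the Torelli image in $\mathrm{SL}(2g-2,\mathbb{C})$ plus a Moore-type criterion) is not what the paper does, nor is it clear it could be carried out. The paper's device is concrete and low-dimensional: choose $g-1$ pairwise disjoint pairs of separating curves $(\delta_i,\eta_i)$ whose twists act on $\mathrm{H}^1_{\alpha}$ as transvections preserving complementary $2$-dimensional blocks spanned by classes $[\mu_i],[\nu_i]$; compute explicitly $\mu_i(\eta_i)=(1-\alpha(a_i))(1-\alpha(b_i))$ and $\nu_i(\delta_i)=(1-\alpha(a_i)^{-1})(1-\alpha(b_i)^{-1})$; then J{\o}rgensen's inequality shows that for $\alpha$ in an explicit open set the two resulting unipotent matrices generate a non-discrete, non-elementary subgroup of $\mathrm{SL}(2,\mathbb{C})$, whose closure (by the classification of such closures, the real forms being excluded by a trace condition) acts transitively on $\mathbb{CP}^1$, hence ergodically; Fubini then gives ergodicity on the whole fiber, and the open set of good $\alpha$ is promoted to a set of full measure by transporting it around with the (already ergodic) action on the base. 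Finally, your instinct about genus $2$ is backwards: it is not a degenerate case needing separate treatment but the paper's model case (the fiber is $\mathbb{CP}^1$ and a single pair of separating twists suffices), and the theorem is proved for all $g\geq 2$.
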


 The mapping class group action preserves this fiber bundle structure, and to prove the theorem we first prove that the induced action on the base is ergodic. Then we observe that the Torelli group stabilizes globally the fibers, and we prove that its action is ergodic in almost every fiber.

\begin{itemize}

\item The action on $\mathrm{H}^1(S, \mathbb{C}^*)$ is actually the linear diagonal action of $\mathrm{Sp}(2g, \mathbb{Z})$ on $\mathbb{R}^{2g} \times (\mathbb{R}/\mathbb{Z})^{2g} $. Moore's theorem gives the ergodicity. 

\item The Torelli group $\mathcal{I}(S)$ acts preserving the fibers of the fibrations, namely the projectivized spaces of the twisted cohomology group $\mathrm{H}^1_{\alpha}(\Gamma, \mathbb{C})$. This action is in fact projective and thus one gets a nice family of representations of the Torelli group : 

$$ \tau_{\alpha} : \mathcal{I}(S) \longrightarrow \mathrm{PGL}(2g-2, \mathbb{C}) $$ 

\noindent In Section \ref{Torelli}, we provide an explicit computation of the action of a family of Dehn twists along separating curves on $\mathrm{PH}^1_{\alpha}(\Gamma, \mathbb{C})$. We deduce from this computation that for almost all $\alpha$, this action is ergodic.

\end{itemize}

\noindent These two last points together imply the main theorem. A remarkable consequence of the computation is that the mapping class group preserves no symplectic form. In fact it preserves no absolutely continuous measure relatively to the Lebesgue measure, which contrasts with the case where $G$ is reductive, in which we have such a symplectic form at hand, by Goldman's work (see \cite{Gold2}).

\noindent Our original motivation was to study the holonomy of branched affine structures. A direct corollary is that the set of representation arising as the holonomy of such a structure is an open set of full measure of the character variety.

\paragraph*{Acknowledgements.}

 I would particularly like to thank Julien Marché for carefully explaining twisted cohomology to me, Louis Funar for pointing out that the representations of the Torelli group I am considering were originally brought to light by Chueshev, and Serge Cantat for asking me the question that led to this paper. I also thank Luc Pirio for helpful discussions and useful comments on the text.
 
 \noindent I am extremely grateful to my advisor Bertrand Deroin who encouraged me to get involved in mapping class group dynamics. His constant encouragements,
 advice and careful reading of this text made his contribution to this work invaluable to me. 
 \noindent I am also very thankful to an anonymous referee, notably for suggesting a significant improvement of the proof of Proposition 8, but also for various comments on the substance and structure of this paper, which have rendered it much clearer.
 
\noindent  Finally, I would like to thank the DMA at \'Ecole Normale Supérieure which gave me wonderful working conditions.

\newpage

We introduce notations that will be used all along the paper :

\begin{itemize}

\item $S$ is a closed oriented surface of genus $g \geq 2$.

\item $\Gamma$ is the fundamental group of the surface $S$. 

\item $\mathrm{Aff}(\mathbb{C})$ is the group of complex affine transformations of the complex line. 

\item $\mathrm{Mod}(S)$ is the mapping class group of $S$. 

\end{itemize}

\section{Action of the mapping class group on the character variety.}

\label{charactervariety}

\subsection{Structure of the character variety.}

Let us recall the standard presentation for $\Gamma$ : 

$$ \Gamma = \langle a_1,b_1, \cdots, a_g, b_g \ | \ \prod_{i=1}^g{ [a_i,b_i] = 1 }  \rangle $$ 

\noindent Let $ \rho : \Gamma \longrightarrow \mathrm{Aff}(\mathbb{C}) $ be a group homomorphism. If we note $\rho(a_i) : z \mapsto A_iz + U_i$ and $\rho(b_i) : z \mapsto B_iz + V_i$, the following holds : 

$$ \sum_{i=1}^g{ (A_i - 1)V_i + (1 - B_i)U_i} = 0. $$

\noindent Conversely, every set $(A_i,U_i,B_i,V_i) \in \mathbb{C}^{*g}\times \mathbb{C}^g\times \mathbb{C}^{*g}\times \mathbb{C}^g $ verifying the equation above defines a representation of $\Gamma$ in $\mathrm{Aff}(\mathbb{C}) $. Thus $ \mathrm{Hom}(\Gamma, \mathrm{Aff}(\mathbb{C}) ) $ can be seen as an algebraic variety.

\noindent The quotient of $\mathrm{Hom}(\Gamma, \mathrm{Aff}(\mathbb{C}))$ by the action by conjugation of $\mathrm{Aff}(\mathbb{C})$ is not Haussdorf. Nevertheless, the orbits responsible for this are the orbits of representations which are abelian (\textit{i.e.} whose image is an abelian subgroup of $\mathrm{Aff}(\mathbb{C}) )$. Removing these ones, one gets a nice quotient (see Proposition 3).

\begin{definition}

The character variety $ \chi (\Gamma, \mathrm{Aff}(\mathbb{C}) ) $ is defined to be the quotient of $\mathrm{Hom}(\Gamma, \mathrm{Aff}(\mathbb{C})) \setminus \{ \text{abelian} \ \text{representations}\}$ by the action by conjugation of $\mathrm{Aff}(\mathbb{C})$. 

\end{definition}

 Let  $\rho \in \mathrm{Hom}(\Gamma, \mathrm{Aff}(\mathbb{C}))$ be a representation, one can look at its linear part (obtained from $\rho$ just by post composing by the natural group homomorphism $\mathbb{C}^* \ltimes \mathbb{C} \longrightarrow \mathbb{C}^*$). This allows us to define  : 

$$ l : \mathrm{Hom}(\Gamma, \mathrm{Aff}(\mathbb{C})) \longrightarrow \mathrm{Hom}(\Gamma, \mathbb{C}^*) = \mathrm{Hom}(H_1(S, \mathbb{Z}), \mathbb{C}^*) $$ \noindent which factors through $\chi (\Gamma, \mathrm{Aff}(\mathbb{C}) )$, because two conjugate representations have the same linear part.

\begin{proposition}

The map $ L : \chi (\Gamma, \mathrm{Aff}(\mathbb{C}) ) \longrightarrow H^1(S, \mathbb{C}^*) $  is a projective fibration with fiber  $\mathbb{CP}^{2g-3}$.

\end{proposition}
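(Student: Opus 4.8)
The plan is to identify the fiber of $L$ over a linear part $\alpha$ with the projectivization $\mathbb{P}(H^1_\alpha(\Gamma,\mathbb{C}))$ of the twisted first cohomology, and then to assemble these projective spaces into a locally trivial projective bundle over the base. The whole argument rests on the fact that, for $\alpha$ nontrivial, $\dim_{\mathbb{C}} H^1_\alpha(\Gamma,\mathbb{C})$ is constant, equal to $2g-2$.

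First I would exploit the decomposition $\rho=(\alpha,\lambda)$ from the introduction: writing $\rho(\gamma)(z)=\alpha(\gamma)z+\lambda(\gamma)$, the homomorphism condition makes $\alpha$ a character and $\lambda$ a twisted cocycle, $\lambda(\gamma_1\gamma_2)=\lambda(\gamma_1)+\alpha(\gamma_1)\lambda(\gamma_2)$, so that $\lambda\in Z^1_\alpha(\Gamma,\mathbb{C})$. A short computation then records the conjugation action: conjugating by a translation $z\mapsto z+c$ changes $\lambda$ by the coboundary $\gamma\mapsto(\alpha(\gamma)-1)c$, while conjugating by a homothety $z\mapsto az$ fixes $\alpha$ and scales $\lambda\mapsto a\lambda$. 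Consequently the $\mathrm{Aff}(\mathbb{C})$-orbit of $(\alpha,\lambda)$ is determined by $\alpha$ together with the line $\mathbb{C}\cdot[\lambda]$ inside $H^1_\alpha(\Gamma,\mathbb{C})$.

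Next I would verify that removing the abelian representations removes exactly the zero class $[\lambda]=0$. If $[\lambda]=0$, a translation conjugates $\rho$ into $\mathbb{C}^*$, which is abelian; conversely, for $\alpha\neq 1$ choose $\gamma_0$ with $\alpha(\gamma_0)\neq 1$, so that $\rho(\gamma_0)$ has a unique fixed point $p_0$, and if the image is abelian every $\rho(\gamma)$ commutes with $\rho(\gamma_0)$ and hence fixes $p_0$, forcing $[\lambda]=0$ after centering at $p_0$. (For the trivial character every representation is by translations, hence abelian, so $\chi$ has no point above it; the relevant base is therefore the connected open set $\{\alpha\neq 1\}$, on which $L$ takes its values.) Thus $L^{-1}(\alpha)=\mathbb{P}(H^1_\alpha(\Gamma,\mathbb{C}))$. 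The dimension follows from the presentation: a cocycle is a vector $(U_i,V_i)\in\mathbb{C}^{2g}$ subject to the single equation $\sum_i(1-B_i)U_i+(A_i-1)V_i=0$, whose coefficient form is nonzero exactly for $\alpha\neq 1$, giving $\dim Z^1_\alpha=2g-1$; the coboundaries form the line spanned by $((A_i-1),(B_i-1))$, so $\dim H^1_\alpha=2g-2$ and the fiber is $\mathbb{CP}^{2g-3}$ (equivalently, via $\chi(S)=2-2g$ together with $H^0_\alpha=H^2_\alpha=0$ for $\alpha\neq 1$ by Poincaré duality).

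The main point, and the step I expect to be most delicate, is local triviality. Since the defining form and the coboundary generator depend holomorphically on $\alpha$ and have constant rank on $\{\alpha\neq 1\}$, the families $Z^1_\alpha$ and $B^1_\alpha$ are holomorphic sub-bundles of the trivial bundle $\{\alpha\neq 1\}\times\mathbb{C}^{2g}$, and $\mathcal{H}:=\mathcal{Z}/\mathcal{B}$ is a holomorphic vector bundle of rank $2g-2$; explicit local trivializations are obtained near a given $\alpha_0$ by solving the linear constraint for one coordinate and choosing a complement to the coboundary line. Identifying the conjugation quotient with $\mathbb{P}(\mathcal{H})$ then shows simultaneously that $\chi(\Gamma,\mathrm{Aff}(\mathbb{C}))$ is Hausdorff (the niceness asserted just after the definition) and that $L$ is the structural projection of a projective fibration with fiber $\mathbb{CP}^{2g-3}$. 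The constancy of the rank away from the trivial character is precisely what makes the quotient well-behaved, and it is also the reason that character must be excised.
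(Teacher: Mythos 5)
Your proof is correct and follows essentially the same route as the paper: identify the fiber over $\alpha\neq 1$ with $Z^1_\alpha(\Gamma,\mathbb{C})$, observe that conjugation acts by $\lambda\mapsto a\lambda + b(1-\alpha)$ so the quotient is $\mathbb{P}\bigl(H^1_\alpha(\Gamma,\mathbb{C})\bigr)$, and count dimensions via the single cocycle relation coming from the surface group relator. You additionally spell out two points the paper leaves implicit --- that the excised abelian representations are exactly the fiber over the trivial character together with the classes $[\lambda]=0$, and the local triviality of the bundle $\mathcal{Z}/\mathcal{B}$ --- which only strengthens the argument.
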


\begin{proof}

The map $l$ restricted to $ \mathrm{Hom}(\Gamma, \mathrm{Aff}(\mathbb{C})) \setminus l^{-1}(\{ 1 \})$ is a vector bundle with fiber $\mathbb{C}^{2g-1}$. Furthermore, for all $\alpha \in H^1(S, \mathbb{C}^*) $, $ l^{-1}(\{ \alpha\}) = Z^1_{\alpha}(\Gamma, \mathbb{C}) $ where 

$$ Z^1_{\alpha}(\Gamma, \mathbb{C}) = \{ \lambda : \Gamma \longrightarrow \mathbb{C} \ | \ \forall \gamma, \gamma' \in \Gamma \ \lambda(\gamma \cdot \gamma') = \lambda(\gamma) + \alpha(\gamma) \lambda(\gamma') \}$$

\noindent The vector space $Z^1_{\alpha}(\Gamma, \mathbb{C})$ is the set of cochains of the cohomology  of $\Gamma$ twisted by $\alpha$. The action of $\mathrm{Aff}(\mathbb{C})$ by conjuguation stabilizes the fibers  $l^{-1}(\{ \alpha\}) = Z^1_{\alpha}(\Gamma, \mathbb{C}) $. Let $\rho := z \mapsto az +b$ and $\lambda \in Z^1_{\alpha}(\Gamma, \mathbb{C})$. We have $\rho \cdot \lambda = b(1 - \alpha) + a \lambda$, so the quotient of $Z^1_{\alpha}(\Gamma, \mathbb{C})$ by the action of $\mathrm{Aff}(\mathbb{C})$ is the projective space of $Z^1_{\alpha}(\Gamma, \mathbb{C}) / \mathbb{C}\cdot (1 - \alpha) = \mathrm{H}^1_{\alpha}(\Gamma,\mathbb{C)}$. 
\vspace{2mm}

\noindent Take $\lambda \in Z^1_{\alpha}(\Gamma, \mathbb{C})$. It is entirely determined by the data of $\lambda(a_1), \lambda(b_1), \cdots$ $\lambda(a_g), \lambda(b_g)$ and those $2g$ complex numbers must satisfy the linear relation 
$$ \sum_{i=1}^g{\lambda(a_i)(1 - \alpha(a_i) )+ \lambda(b_1)(\alpha(b_i) - 1 )} =  0 $$

\noindent Conversely, the data of $2g$ complex numbers satisfying the linear relation above defines an element of $ Z^1_{\alpha}(\Gamma, \mathbb{C}) $. Therefore $ Z^1_{\alpha}(\Gamma, \mathbb{C})$ has complex dimension $2g - 1$ and  $\mathrm{H}^1_{\alpha}(\Gamma, \mathbb{C})$ has complex dimension $2g - 2$. Hence the fiber is isomorphic to $\mathbb{CP}^{2g-3}$.

\end{proof}

\noindent From now on, $\chi$ will be the variety of $\mathrm{Aff}(\mathbb{C})$-characters. 

\vspace{3mm}

\noindent Let $H$ is a subgroup of $\mathbb{C}^*$. We define 

$$ \chi_H = \{ \rho \in \chi \ | \ \mathrm{Im}(L(\rho) ) \subset H \} $$ 

\noindent One will say that a representation $\rho$ is 

\begin{enumerate}

\item \textbf{unitary} (or Euclidean) if it belongs to $\chi_{\mathbb{U}}$, where $\mathbb{U}$ is the set of complex numbers of absolute value $1$.

\item \textbf{real} if it belongs to $ \chi_{\mathbb{R}^*}$.

\item \textbf{almost real} if there exists a subgroup of finite index $\Gamma'$ in $\Gamma$ such that $L(\rho)(\Gamma') \subset \mathbb{R}^*$. 

\item \textbf{abelian} is the image of $\rho$ is an abelian subgroup of $\mathrm{Aff}(\mathbb{C})$.

\item \textbf{strictly affine} in any other case.

\end{enumerate}

\subsection{The $\mathrm{Mod}(S)$ action.}

The mapping class group of a closed surface $S$ is classically defined as

$$\mathrm{Mod}(S) = \mathrm{Homeo}^+(S) / \mathrm{Homeo_0(S)}$$

\noindent Any element of $\mathrm{Mod}(S)$ defines an element of $ \mathrm{Out}(\Gamma) = \mathrm{Aut}(\Gamma) / \mathrm{Inn}(\Gamma) $. By a theorem of Dehn-Nielsen-Baer

$$ \mathrm{Mod}(S) \simeq  \mathrm{Out}^+(\Gamma) $$ 

\noindent where $\mathrm{Out}^+(\Gamma)$ is the subgroup of elements in $\mathrm{Out}(\Gamma)$ preserving the fundamental class in $\mathrm{H}^2(\Gamma, \mathbb{Z})$.

\vspace{2mm}

Notice now that any element of $\mathrm{Aut}(\Gamma)$ acts on $\mathrm{Hom}(\Gamma, \mathrm{Aff}(\mathbb{C}) )$ by precomposition. This action induces an action of $\mathrm{Out}(\Gamma)$ on the character variety. An important remark which will be detailed later is that this action preserves the fiber bundle structure described in the previous section.

\begin{proposition}

\begin{enumerate}

\item Let $H$ be a subgroup of $\mathbb{C}^*$. Then the $\mathrm{Mod}(S)$-action preserves $\chi_H$.

\item  The $\mathrm{Mod}(S)$-action preserves the set of almost-real representations.

\item The $\mathrm{Mod}(S)$-action preserves the set of strictly affine representations.

\end{enumerate}

\end{proposition}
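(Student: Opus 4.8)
The plan is to reduce all three statements to a single equivariance property of the linear part, after which each becomes essentially formal. The linear part $\mathrm{Aff}(\mathbb{C}) = \mathbb{C}^* \ltimes \mathbb{C} \to \mathbb{C}^*$ is a group homomorphism, so it commutes with precomposition: for every $\phi \in \mathrm{Aut}(\Gamma)$ and every $\rho \in \mathrm{Hom}(\Gamma, \mathrm{Aff}(\mathbb{C}))$ one has $l(\rho \circ \phi) = l(\rho) \circ \phi$. Since the target $\mathbb{C}^*$ is abelian, inner automorphisms fix linear parts, so this passes to the quotient and yields the $\mathrm{Mod}(S) \simeq \mathrm{Out}^+(\Gamma)$ action on $H^1(S,\mathbb{C}^*) = \mathrm{Hom}(\Gamma, \mathbb{C}^*)$, compatibly with the fibration $L$. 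The first thing I would record is that each of the defining conditions (1)--(3) is read off from the class $\alpha = L(\rho)$ alone, hence is a well-defined condition on $\chi$; establishing this equivariance up front is what renders everything else routine.

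For (1), membership in $\chi_H$ is precisely the condition $\mathrm{Im}(\alpha) \subset H$ on the linear part. Because $\phi$ is an automorphism, it is in particular surjective, so $\mathrm{Im}(\alpha \circ \phi) = \alpha(\phi(\Gamma)) = \alpha(\Gamma) = \mathrm{Im}(\alpha)$. Thus the image of the linear part is an invariant of the whole $\mathrm{Mod}(S)$-orbit, and $\chi_H$ is preserved for every subgroup $H \le \mathbb{C}^*$; in particular this covers both the unitary locus $\chi_{\mathbb{U}}$ and the real locus $\chi_{\mathbb{R}^*}$. For (2), suppose $\rho$ is almost real, witnessed by a finite-index subgroup $\Gamma' \le \Gamma$ with $L(\rho)(\Gamma') \subset \mathbb{R}^*$. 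For $\rho \circ \phi$ I would take $\Gamma'' := \phi^{-1}(\Gamma')$, which has the same finite index as $\Gamma'$ since $\phi$ is bijective; then by equivariance $(L(\rho) \circ \phi)(\Gamma'') = L(\rho)(\phi(\phi^{-1}(\Gamma'))) = L(\rho)(\Gamma') \subset \mathbb{R}^*$, so $\rho \circ \phi$ is again almost real.

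Statement (3) then follows formally: a class in $\chi$ is strictly affine exactly when it lies in none of the unitary, real, or almost-real loci, abelian representations having already been removed from $\chi$ by construction. Each of these loci is invariant by (1) and (2), so their complement --- the strictly affine locus --- is invariant as well. I do not anticipate a genuine obstacle here: the entire content is the equivariance of the linear part together with the surjectivity of automorphisms. The only point that needs attention is to confirm that each of the three conditions is conjugation-invariant, so that it is meaningful on the quotient $\chi$; this is immediate once one observes that all three are determined by the linear part alone.
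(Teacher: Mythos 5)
Your proof is correct. The paper states this proposition without any proof, treating it as immediate; your argument --- equivariance of the linear part $l(\rho\circ\phi)=l(\rho)\circ\phi$ under precomposition, surjectivity of $\phi$ to see that $\mathrm{Im}(L(\rho))$ is an orbit invariant, pulling back the finite-index subgroup for the almost-real case, and taking complements for the strictly affine case --- is exactly the routine verification the author leaves to the reader, and it is complete.
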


\paragraph*{Remark}

This action preserves no measure \textit{a priori}. Still $\chi$ is a differentiable manifold and even tough the Lebesgue measure is not canonically defined, it makes sense to say that a subset $A$ has measure zero (just say that its Lebesgue measure in any chart is zero). In a more general setting, an action by diffeomorphisms on a manifold will be said to be ergodic if any invariant subset has zero measure or full measure in the sense defined previously.

\subsection{The symplectic representation.}

The mapping class group acts naturally on $\mathrm{H}_1(S,\mathbb{Z)}$, preserving the symplectic intersection form. Up to the choice of a symplectic basis of $\mathrm{H}_1(S,\mathbb{Z)}$, one gets a linear representation of $\mathrm{Mod}(S)$ in $\mathrm{Sp}(2g, \mathbb{Z})$ : 
$$\Psi : \mathrm{Mod}(S) \longrightarrow \mathrm{Sp}(2g, \mathbb{Z}).$$

\noindent Let us denote by $ \mathcal{I}(S)$ the kernel of this representation. This group is usually called the Torelli group. It is the subgroup of $\mathrm{Mod}(S)$ acting trivially on the homology of $S$.

\begin{theorem}

The image of the symplectic representation is  $\mathrm{Sp}(2g, \mathbb{Z})$.

\end{theorem}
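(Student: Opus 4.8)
The plan is to recognize this as the classical surjectivity statement for the symplectic representation and to prove it by the standard generators-and-relations strategy. Since $\Psi$ was defined via the action on $\mathrm{H}_1(S,\mathbb{Z})$ preserving the integral intersection form, it automatically takes values in $\mathrm{Sp}(2g,\mathbb{Z})$; thus the entire content of the statement is \emph{surjectivity}. The idea is to produce a generating set of $\mathrm{Mod}(S)$ made of Dehn twists, to compute the image of each such twist under $\Psi$, and then to check that these images already generate the whole target group.

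First I would invoke the Dehn--Lickorish theorem: $\mathrm{Mod}(S)$ is generated by finitely many Dehn twists along nonseparating simple closed curves (for instance the Lickorish or Humphries generating set). This reduces the problem to understanding $\Psi(T_c)$ for a single Dehn twist $T_c$. The key computation is then the Picard--Lefschetz formula: if $c$ is a simple closed curve with class $[c]\in \mathrm{H}_1(S,\mathbb{Z})$, the twist $T_c$ acts on homology by the symplectic transvection
$$ \Psi(T_c) : x \longmapsto x + \langle x, [c]\rangle\, [c], $$
where $\langle \cdot,\cdot\rangle$ denotes the intersection form. This is established by a direct geometric argument tracking how the twist modifies an arc meeting $c$ transversally. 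Consequently the image of $\Psi$ contains the transvection $\tau_v$ for every primitive class $v$ carried by a simple closed curve, and by the change-of-coordinates principle every primitive class is so realized.

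The crux is the purely algebraic fact that symplectic transvections generate $\mathrm{Sp}(2g,\mathbb{Z})$. Concretely, I would fix simple closed curves realizing the symplectic basis classes $a_1,b_1,\dots,a_g,b_g$ together with the ``diagonal'' classes $a_i+b_i$ and $a_i+a_{i+1}$, and verify that the associated transvections generate all of $\mathrm{Sp}(2g,\mathbb{Z})$. I expect this to be the main obstacle: it amounts either to an explicit reduction of an arbitrary integral symplectic matrix to the identity by elementary row/column operations, each realized by one of these transvections, or to citing the classical generation result (Burkhardt; see also Farb--Margalit, \emph{A Primer on Mapping Class Groups}, Ch.~6). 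Once this algebraic step is in place, combining it with the Picard--Lefschetz computation and the Dehn--Lickorish generation shows that $\Psi$ hits a generating set of $\mathrm{Sp}(2g,\mathbb{Z})$, and surjectivity — hence the theorem — follows.
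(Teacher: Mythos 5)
Your proposal is correct and is precisely the standard argument that the paper itself defers to: the paper offers no proof of its own, only the citation to Poincar\'e and to Farb--Margalit, and the proof in that reference is exactly your combination of Dehn--Lickorish generation, the Picard--Lefschetz transvection formula, and Burkhardt-type generation of $\mathrm{Sp}(2g,\mathbb{Z})$ by transvections. You correctly identify the algebraic generation step as the real content; citing it (or carrying out the row-reduction) completes the argument.
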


\noindent This theorem was originally proved by Poincaré. A modern proof of this theorem can be found in \cite{FM}.

\noindent This way $\mathrm{Mod}(S)$ acts on $\mathrm{Hom}(\mathrm{H}_1(S, \mathbb{Z}), \mathbb{C}^*)$ by precomposition by the image of the symplectic representation. This means that for $f \in \mathrm{Mod}(S)$,  the following diagram commutes :

 \[\xymatrix{
\chi \ar[d]^L \ar[r]^{f} &  \chi \ar[d]^{L}  \\
  \mathrm{H}^1(S,\mathbb{C}^*) \ar[r]_{\Psi(f)}  &  \mathrm{H}^1(S,\mathbb{C}^*)}
\]

\subsection{The Torelli group action on the fibers.}

\begin{proposition}

The Torelli group $\mathcal{I}(S)$ preserves the fibers of $L$, and acts on them by projective transformations.

\end{proposition}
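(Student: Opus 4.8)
The plan is to read off both assertions from the commutative square relating the $\mathrm{Mod}(S)$-action on $\chi$ to the symplectic representation $\Psi$, combined with the description of the fibers of $L$ as projectivized twisted cohomology groups established in Proposition 2. The key input is simply that $\mathcal{I}(S)$ is by definition the kernel of $\Psi$, so it acts trivially on $H^1(S,\mathbb{C}^*)$.

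First I would prove that the fibers are preserved. Let $f \in \mathcal{I}(S)$; then $\Psi(f) = \mathrm{id}$, so the commutative diagram gives $L \circ f = \Psi(f) \circ L = L$, which says exactly that $f$ maps each fiber $L^{-1}(\{\alpha\})$ into itself. Concretely, lifting $f$ to an automorphism $\phi \in \mathrm{Aut}(\Gamma)$, the action on $\mathrm{Hom}(\Gamma, \mathrm{Aff}(\mathbb{C}))$ is precomposition $\rho \mapsto \rho \circ \phi$, whose linear part is $\alpha \circ \phi$. Since $\phi$ induces the identity on $\Gamma^{\mathrm{ab}} = H_1(S,\mathbb{Z})$ and $\alpha$ factors through $\Gamma^{\mathrm{ab}}$, we get $\alpha \circ \phi = \alpha$, confirming directly that the linear part is unchanged.

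Next I would check that the induced map on a fixed fiber is projective. Precomposition by $\phi$ sends a twisted cocycle $\lambda \in Z^1_{\alpha}(\Gamma, \mathbb{C})$ to $\lambda \circ \phi$, and using $\alpha \circ \phi = \alpha$ one verifies that $\lambda \circ \phi$ again satisfies the twisted cocycle relation $\lambda'(\gamma\gamma') = \lambda'(\gamma) + \alpha(\gamma)\lambda'(\gamma')$; thus $\phi^\ast : \lambda \mapsto \lambda \circ \phi$ is a linear automorphism of $Z^1_{\alpha}(\Gamma, \mathbb{C})$. Moreover $\phi^\ast$ preserves the line of coboundaries $\mathbb{C}\cdot(1-\alpha)$: indeed $(1-\alpha)\circ\phi(\gamma) = 1 - \alpha(\phi(\gamma)) = 1 - \alpha(\gamma)$, so $\phi^\ast$ fixes $1-\alpha$. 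Hence $\phi^\ast$ descends to a linear automorphism of the quotient $\mathrm{H}^1_{\alpha}(\Gamma, \mathbb{C}) = Z^1_{\alpha}(\Gamma, \mathbb{C})/\mathbb{C}\cdot(1-\alpha)$, and therefore to a projective transformation of its projectivization $\mathbb{CP}^{2g-3}$, which is the fiber.

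The one point demanding care — and the main, if mild, obstacle — is well-definedness: the mapping class $f$ determines $\phi$ only up to an inner automorphism, so the projective transformation $\phi^\ast$ must be independent of that choice. The cleanest argument is that precomposition by an inner automorphism $\mathrm{conj}_h$ of $\Gamma$ satisfies $\rho \circ \mathrm{conj}_h = \mathrm{conj}_{\rho(h)} \circ \rho$, i.e. it agrees with conjugation by $\rho(h) \in \mathrm{Aff}(\mathbb{C})$; since $\chi$ is precisely the quotient by such conjugations, inner automorphisms act trivially on $\chi$ and hence on each fiber. Equivalently, this is the standard fact that $\mathrm{Inn}(\Gamma)$ acts trivially on group cohomology, here on $\mathrm{H}^1_{\alpha}(\Gamma,\mathbb{C})$. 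This promotes $f \mapsto \phi^\ast$ to a well-defined projective action of $\mathcal{I}(S)$ (factoring through $\mathrm{Out}(\Gamma)$) on each fiber, completing the proof.
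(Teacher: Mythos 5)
Your proof is correct and follows essentially the same route as the paper's: precomposition by a lift $\phi\in\mathrm{Aut}(\Gamma)$ of $f\in\mathcal{I}(S)$ acts linearly on $Z^1_{\alpha}(\Gamma,\mathbb{C})$, fixes the line $\mathbb{C}\cdot(1-\alpha)$, and hence descends to a projective transformation of $\mathrm{PH}^1_{\alpha}(\Gamma,\mathbb{C})$. You simply supply more detail than the paper does (the verification of the twisted cocycle condition, the identification $\alpha\circ\phi=\alpha$, and the well-definedness modulo inner automorphisms), all of which is accurate.
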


\begin{proof}

Let $f$ be an automorphism whose class in $\mathrm{Mod}(S)$ belongs to $\mathcal{I}(S)$. For any $\alpha \in \mathrm{H}^1(S, \mathbb{C}^*)$, $f$ acts linearly on  $ Z^1_{\alpha}(\Gamma, \mathbb{C})$, preserving the line generated by $1 - \alpha$. Thus $f$ defines a linear automorphism $ \mathrm{H}^1_{\alpha}(S, \mathbb{C})$, and so a projective transformation of $\mathrm{P}\mathrm{H}^1_{\alpha}(S, \mathbb{C})$.

\end{proof}

\section{Ergodicity of the $\mathrm{Sp}(2g, \mathbb{Z})$-action on $(\mathbb{C}^*)^{2g}$.}
\label{ergodicitebase}

The choice of a symplectic basis $a_1, b_1, ..., a_g, b_g$ of $\mathrm{H}_1 (S, \mathbb{Z})$ identifies $ \mathrm{H}^1(S, \mathbb{C}^*)$ and $(\mathbb{C}^*)^{2g} $ via the map 

$$ \alpha \longrightarrow (\alpha(a_1), \alpha(b_1), ..., \alpha(a_g), \alpha(b_g))$$

\noindent The exponential map identifies $\mathbb{T}^{2g} \times \mathbb{R}^{2g}$ with $ \mathrm{H}^1(S, \mathbb{C}^*) \simeq (\mathbb{C}^*)^{2g} $  in such a way that the $\mathrm{Sp}(2g, \mathbb{Z})$-action on $ \mathrm{H}^1_{\alpha}(S, \mathbb{C})$ induces the diagonal action by linear transformations on $\mathbb{T}^{2g} \times \mathbb{R}^{2g}$. Recall the following theorem :
\begin{proposition}

The $\mathrm{Sp}(2g, \mathbb{Z})$-action on $\mathbb{R}^{2g}$ is ergodic.

\end{proposition}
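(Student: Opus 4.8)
The plan is to derive the ergodicity from Moore's theorem, via the classical duality between the action of a lattice on a homogeneous space and the action of the point-stabiliser on the quotient by the lattice. First I would realise $\mathbb{R}^{2g} \setminus \{0\}$ as a homogeneous space of $G := \mathrm{Sp}(2g, \mathbb{R})$. By Witt's extension theorem for the symplectic form, any nonzero vector can be completed to a symplectic basis, so $G$ acts transitively on $\mathbb{R}^{2g} \setminus \{0\}$; fixing the base point $e_1$ and setting $H := \mathrm{Stab}_G(e_1)$, one obtains a $G$-equivariant identification $\mathbb{R}^{2g} \setminus \{0\} \simeq G/H$. Since $G \subset \mathrm{SL}(2g, \mathbb{R})$ preserves the Lebesgue measure, the restriction of Lebesgue measure to $\mathbb{R}^{2g} \setminus \{0\}$ lies in the measure class of the $G$-invariant measure on $G/H$. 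As the origin is a single fixed point of measure zero, ergodicity of the $\mathrm{Sp}(2g, \mathbb{Z})$-action on $\mathbb{R}^{2g}$ is equivalent to ergodicity of the $\mathrm{Sp}(2g, \mathbb{Z})$-action on $G/H$.

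Next I would invoke the duality principle: for a lattice $\Gamma \subset G$ and a closed subgroup $H \subset G$, the $\Gamma$-action on $G/H$ is ergodic if and only if the $H$-action on $\Gamma \backslash G$ is ergodic. Here $\Gamma = \mathrm{Sp}(2g, \mathbb{Z})$ is an arithmetic lattice in $G$, so it suffices to prove that $H$ acts ergodically on $\Gamma \backslash G$.

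For the final step I would apply Moore's ergodicity theorem. The group $G = \mathrm{Sp}(2g, \mathbb{R})$ is connected, simple, noncompact, with finite centre $\{\pm I\}$, and $\Gamma$ is a lattice; Moore's theorem then asserts that every closed \emph{noncompact} subgroup of $G$ acts ergodically on $\Gamma \backslash G$. It therefore remains only to check that $H$ is noncompact. I would verify this by exhibiting an explicit unbounded one-parameter subgroup fixing $e_1$, for instance the family of symplectic transvections $T_{\lambda} : x \mapsto x + \lambda\, \omega(x, u)\, u$ directed along a vector $u$ with $\omega(e_1, u) = 0$: these satisfy $T_\lambda T_\mu = T_{\lambda + \mu}$, fix $e_1$, and form an unbounded unipotent subgroup of $H$.

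I expect the only genuinely delicate point to be the correct packaging of the duality and of Moore's theorem, namely checking that the hypotheses (lattice property, finite centre, absence of compact factors) are met and that the relevant measure classes match, rather than any computation: the transitivity of $G$ and the noncompactness of the stabiliser are both immediate. An alternative and essentially equivalent route would bypass the abstract duality statement and argue directly from the Howe--Moore vanishing of matrix coefficients for $G$, but the homogeneous-space formulation above seems the most economical.
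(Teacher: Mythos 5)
Your proposal is correct and follows essentially the same route as the paper, which deduces the statement from Moore's theorem applied to $G/H$ where $G = \mathrm{Sp}(2g,\mathbb{R})$ acts transitively on $\mathbb{R}^{2g}\setminus\{0\}$ and $H$ is the (closed, noncompact) stabiliser of a nonzero vector. Your additional details --- the duality formulation, Witt extension for transitivity, and the explicit unipotent transvections showing noncompactness of $H$ --- merely flesh out steps the paper leaves implicit.
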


It is a corollary of Moore theorem, which states that if  $\Gamma$ is a lattice in a semi-simple Lie group $G$ and $H$ is a closed non-compact subgroup of $G$, then the $\Gamma$-action on $G/H$ is ergodic. The original proof of this theorem can be found in \cite{MR0193188}.

\begin{proposition}

The $\mathrm{Sp}(2g, \mathbb{Z})$-action on $(\mathbb{C}^*)^{2g}$ is ergodic with respect to the Lebesgue measure.

\end{proposition}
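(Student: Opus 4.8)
The plan is to use the identification $(\mathbb{C}^*)^{2g}\simeq\mathbb{R}^{2g}\times\mathbb{T}^{2g}$ from the previous paragraph, under which $\mathrm{Sp}(2g,\mathbb{Z})$ acts \emph{diagonally}: a matrix $A$ acts by the linear map $x\mapsto Ax$ on the $\mathbb{R}^{2g}$-factor and by the induced toral automorphism $\theta\mapsto A\theta$ on $\mathbb{T}^{2g}=\mathbb{R}^{2g}/\mathbb{Z}^{2g}$. Since the Lebesgue measure is infinite, I would test ergodicity on bounded invariant functions, and in fact on the indicator $F=\mathbf 1_E$ of an invariant set $E$, with the aim of proving $F$ constant a.e. For almost every fixed $x$ the slice $\theta\mapsto F(\theta,x)$ lies in $L^2(\mathbb{T}^{2g})$, so I expand it as $F(\theta,x)=\sum_{n\in\mathbb{Z}^{2g}}c_n(x)\,e^{2\pi i\langle n,\theta\rangle}$. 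Writing out the invariance $F(A\theta,Ax)=F(\theta,x)$ and matching Fourier coefficients yields the transport rule
$$c_n(Ax)=c_{A^{T}n}(x),\qquad A\in\mathrm{Sp}(2g,\mathbb{Z}).$$

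First I would dispatch the zero mode. Taking $n=0$ gives $c_0(Ax)=c_0(x)$, so $c_0$ is a bounded $\mathrm{Sp}(2g,\mathbb{Z})$-invariant function on $\mathbb{R}^{2g}$; by the preceding proposition (ergodicity on $\mathbb{R}^{2g}$) it equals a constant $p_0$ a.e. Here $c_0(x)=\lambda(E_x)$ is the measure of the slice $E_x=\{\theta:(\theta,x)\in E\}$, and the whole problem reduces to showing that the \emph{mean-zero} part of the slices vanishes, i.e. that $c_n\equiv 0$ for every $n\neq 0$. Equivalently, the equivariant field $u(x):=\mathbf 1_{E_x}-p_0\in L^2_0(\mathbb{T}^{2g})$, which satisfies $u(Ax)=A\cdot u(x)$ and has constant norm $\|u(x)\|^2=p_0(1-p_0)$, must be identically zero.

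Ruling out a nonzero such field is the step I expect to be the main obstacle. The decisive structural fact is that $\mathrm{Sp}(2g,\mathbb{Z})$ acts on $\mathbb{Z}^{2g}\setminus\{0\}$ (through $A\mapsto A^{T}$) with only \emph{infinite} orbits. I would exploit this through the correlation kernel $K(x,x')=\langle u(x),u(x')\rangle=\sum_{n\neq0}c_n(x)\overline{c_n(x')}$. By the transport rule $K$ is invariant under the \emph{diagonal} action of $\mathrm{Sp}(2g,\mathbb{Z})$ on $\mathbb{R}^{2g}\times\mathbb{R}^{2g}$; since the symplectic pairing $\omega(x,x')$ is a complete invariant of that action off the null set $\{\omega=0\}$ (ergodicity on the level sets $\{\omega=c\}$, $c\neq0$, being once more a consequence of Moore's theorem, as the relevant stabilizers are noncompact), one gets $K(x,x')=\tilde\rho\bigl(\omega(x,x')\bigr)$ for some $\tilde\rho$. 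As $K$ is positive-definite and $\omega$ realizes arbitrary differences $t_i-t_j$ along a suitable plane, $\tilde\rho$ is a positive-definite, hence (after modification on a null set) continuous function on $\mathbb{R}$ with $\tilde\rho(0)=p_0(1-p_0)$.

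Evaluating on the orthogonality relation $\omega(x,x')=0$ and invoking the equality case of Cauchy--Schwarz would then force $u(x)=u(x')$ whenever $x'$ is $\omega$-orthogonal to $x$; since any two vectors admit a common $\omega$-orthogonal vector (as $2g-2\geq 2$), $u$ is a.e. equal to a single $u_0$. Finally $u_0=A\cdot u_0$ for all $A$ exhibits $u_0$ as an $\mathrm{Sp}(2g,\mathbb{Z})$-invariant vector in $L^2_0(\mathbb{T}^{2g})$; because the basis $\{e_n\}_{n\neq0}$ is permuted with infinite orbits, no nonzero invariant $\ell^2$-vector exists, so $u_0=0$, $F$ is constant, and ergodicity follows. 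The genuinely delicate points — and where I expect the real effort (and presumably the referee's simplification) to concentrate — are the passages from almost-everywhere to everywhere: the continuity of $\tilde\rho$ and the legitimacy of evaluating $K$ on the null relation $\{\omega=0\}$.
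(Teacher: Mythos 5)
Your route is genuinely different from the paper's. The paper lifts the invariant set to $\mathbb{R}^{2g}\times\mathbb{R}^{2g}$, uses Moore's theorem on the level sets of $\omega$ to write it (mod null sets) as $\omega^{-1}(I)$, and then shows $I=\mathbb{R}$ directly by translating density points by integer vectors $\vec{k}$, exploiting the $\mathbb{Z}^{2g}$-periodicity in the torus factor; no Fourier analysis, no positive-definiteness. In your scheme the zero-mode reduction, the transport rule $c_n(Ax)=c_{A^{T}n}(x)$, the use of Moore's theorem to reduce the diagonally invariant kernel $K$ to a function $\tilde\rho$ of $\omega(x,x')$, and the closing observation that $L^2_0(\mathbb{T}^{2g})$ has no nonzero $\mathrm{Sp}(2g,\mathbb{Z})$-invariant vector (infinite orbits on $\mathbb{Z}^{2g}\setminus\{0\}$) are all correct, and the middle ingredient is the same one the paper uses.

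The gap is exactly where you flag it, and as written the step does not just need polish -- it fails. The identity $K(x,x')=\tilde\rho(\omega(x,x'))$ is an almost-everywhere statement on $\{\omega\neq 0\}$, and the set $\{\omega(x,x')=0\}$ on which you propose to evaluate it is Lebesgue-null, so the identity carries no information there. Worse, the value $\tilde\rho(0)=p_0(1-p_0)$ that drives the equality case of Cauchy--Schwarz is the diagonal value $K(x,x)=\|u(x)\|^2$, and the diagonal is also null; since $x\mapsto u(x)$ is only measurable, nothing you have established relates $\lim_{t\to 0}\tilde\rho(t)$ to $\|u(x)\|^2$. The argument can be closed, but only by systematically replacing $\{\omega=0\}$ with $\{|\omega|<\delta\}$: (i) show by a change of variables along a symplectic plane that $\tilde\rho$ is positive-definite in the integrated sense, hence a.e. equal to a continuous positive-definite function; (ii) identify $\tilde\rho(0)$ with $p_0(1-p_0)$ using the $L^2$-continuity of translations, $\int\|u(x+h)-u(x)\|^2\,dx\to 0$ as $h\to 0$; (iii) deduce that $\|u(x)-u(x')\|^2=2\,(\tilde\rho(0)-\mathrm{Re}\,\tilde\rho(\omega(x,x')))$ is small for a.e. pair with $|\omega(x,x')|$ small; (iv) chain any two points through a third that is almost $\omega$-orthogonal to both (possible since $2g\geq 4$), concluding that $u$ is a.e. constant. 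Each of these is a real argument you have not supplied, and together they are the bulk of the proof; the paper's density-point argument reaches the same conclusion far more economically.
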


\begin{proof}

Let $B$ a $\mathrm{Sp}(2g, \mathbb{Z})$-invariant measurable subset of $\mathbb{T}^{2g} \times \mathbb{R}^{2g}$ of positive measure and let $A$ be $p^{-1}(B)$ where $p : \mathbb{R}^{2g} \times \mathbb{R}^{2g} \longrightarrow \mathbb{T}^{2g} \times \mathbb{R}^{2g}$ is the natural projection. $A$ is left $\mathbb{Z}^{2g}$-invariant and diagonally $\mathrm{Sp}(2g, \mathbb{Z})$-invariant.

\vspace{3mm}

\noindent $\mathrm{Sp}(2g, \mathbb{R})$ acts transitively on the non-zero level sets of the canonical symplectic form $\omega$ on $\mathbb{R}^{2g}$ and the stabilizer in $\mathrm{Sp}(2g, \mathbb{R})$ of a couple $(x,y) \in \mathbb{R}^{2g} \times \mathbb{R}^{2g}$ is a non-compact closed subgroup of $\mathrm{Sp}(2g, \mathbb{R})$ (see \cite[p.12]{HK} for more details on the structure of the stabilizer). Moore theorem ensures that $\mathrm{Sp}(2g, \mathbb{Z})$ acts ergodically on  $\mathrm{Sp}(2g, \mathbb{R}) / \mathrm{Stab}_{\mathrm{Sp}(2g, \mathbb{R})}\{ (x,y) \} = \omega^{-1}(\{ t \} )$ for all $t \in \mathbb{R}^*$. 

\vspace{3mm}

\noindent Since $A$ is $\mathrm{Sp}(2g, \mathbb{Z})$-invariant, it must be, up to a measurable subset of measure zero, a union of level sets of the symplectic form. Hence $A = \omega^{-1}(I)$ where $I \subset \mathbb{R}$ is a measurable subset. Since $A$ has positive measure, by Fubini theorem $I$ must have positive measure. Let $\alpha $ be any real number and $t \in I$ a density point(which exists according to Lebesgue regularity lemma). Then there exists a couple $(x,y)$ in $\omega^{-1}(\{ t \} )$ and a vector $\vec{k} \in \mathbb{Z}^{2g}$ such that $\omega(\vec{k},y) = \alpha$. Since $A$ is a union of level sets of the symplectic form, $(x,y)$ is a density point of $A$. Translations on the first factor preserve the Lebesgue measure, so $(x + \vec{k}, y)$ is also a density point. By Fubini theorem, $\omega(x + \vec{k}, y) = t + \alpha$ must be a density point of $I$. $\alpha$ has been chosen arbitrarily so $I$ must be equal to $\mathbb{R}$. Hence $A$ is all $\mathbb{R}^{2g} \times \mathbb{R}^{2g}$, therefore the $\mathrm{Sp}(2g, \mathbb{Z})$ action on $\mathbb{T}^{2g} \times \mathbb{R}^{2g}$ is ergodic.
\end{proof}

\section{The Torelli group action on $\mathrm{PH}_{\alpha}^1(\Gamma,\mathbb{C})$ .}
\label{Torelli}
Let us fix once and for all a point $p \in S$ in such a way that we identify $\pi_1(S,p)$ and $\Gamma$. Any diffeomorphism $f$  of $S$ fixing the point $p$ defines canonically an automorphism of $\Gamma$ whose class in $\mathrm{Out}(\Gamma)$ is the class of $f^*$ in $\mathrm{Mod}(S)$. 	In all this section, $\alpha$ is a non-trivial element of $\mathrm{H}^1(S, \mathbb{C}^*)$.

\subsection{Action of a Dehn twist on $\mathrm{H}_{\alpha}^1(\Gamma,\mathbb{C})$}

\begin{proposition}

Any Dehn twist along a separating curve belongs to $\mathcal{I}(S)$.

\end{proposition}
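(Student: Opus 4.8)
The plan is to reduce everything to the classical formula describing how a Dehn twist acts on first homology. Recall that $\mathcal{I}(S)$ is by definition the kernel of the symplectic representation $\Psi$, so a mapping class lies in the Torelli group if and only if it acts as the identity on $\mathrm{H}_1(S, \mathbb{Z})$. Thus it suffices to compute the action on homology of a Dehn twist $T_{\gamma}$ along a separating simple closed curve $\gamma$, and to check that this action is trivial.

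First I would recall the transvection (Picard--Lefschetz) formula: for any simple closed curve $\gamma$ and any class $x \in \mathrm{H}_1(S, \mathbb{Z})$, the induced map on homology satisfies
$$ (T_{\gamma})_*(x) = x + \langle x, [\gamma]\rangle \, [\gamma], $$
where $\langle \cdot, \cdot \rangle$ denotes the algebraic intersection pairing and $[\gamma]$ is the homology class carried by $\gamma$. This is a standard computation performed on a representative of $x$ meeting $\gamma$ transversally; I would either cite it from \cite{FM} or sketch it by inspecting the effect of the twist on arcs crossing $\gamma$.

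The second step, which carries the whole argument, is the observation that a separating curve is null-homologous, that is $[\gamma] = 0$ in $\mathrm{H}_1(S, \mathbb{Z})$. Indeed, by definition $\gamma$ cuts $S$ into two subsurfaces $S_1$ and $S_2$ with $S = S_1 \cup_{\gamma} S_2$, and $\gamma$ is the oriented boundary of $S_1$. A class bounding a subsurface vanishes in homology, so $[\gamma] = 0$.

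Combining the two steps, $(T_{\gamma})_*(x) = x + \langle x, 0 \rangle \cdot 0 = x$ for every $x \in \mathrm{H}_1(S,\mathbb{Z})$, so $T_{\gamma}$ acts trivially on homology and $\Psi(T_{\gamma}) = \mathrm{Id}$, i.e. $T_{\gamma} \in \mathcal{I}(S)$. I do not expect a genuine obstacle here: the only point demanding care is the justification of the transvection formula, which is entirely classical, and the separating hypothesis makes the conclusion immediate once $[\gamma] = 0$ has been established.
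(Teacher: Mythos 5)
Your argument is correct and is precisely the classical proof that the paper defers to \cite{FM}: the transvection formula $(T_{\gamma})_*(x) = x + \langle x, [\gamma]\rangle\,[\gamma]$ combined with the fact that a separating curve is null-homologous. No gap here; the paper itself offers no proof beyond the citation, so your write-up simply supplies the standard details.
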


\noindent This is a classical result, whose proof can be found in \cite{FM}.

\noindent We now explain how one can make an effective computation of the action of a Dehn twist along a separating curve.

\begin{lemme}

\label{calcul}

Let $\delta$ be a separating curve in $S$ such that $p \in \delta$, and let $[\delta] \in \pi_1(S,p)$ be a representative of the free homotopy class of $\delta$.  Let $T_{\delta}$ be the Dehn twist along $\delta$. Then there exists  $ \mu \in Z^1_{\alpha}(\pi_1(S,p), \mathbb{C})$ such that for all $[\gamma] \in \pi_1 (S,p)$ and $\lambda \in Z^1_{\alpha}(\pi_1(S,p), \mathbb{C})$

$$  \lambda(T_{\delta}([\gamma])) = \mu([\gamma])\lambda([\delta])  + \lambda([\gamma]) $$

\end{lemme}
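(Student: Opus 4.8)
The plan is to understand explicitly how the Dehn twist $T_\delta$ acts on $\pi_1(S,p)$ at the level of the group, and then to read off its effect on twisted cocycles. The key geometric fact is that a Dehn twist along $\delta$ alters a loop $\gamma$ only by inserting copies of $[\delta]$ each time $\gamma$ crosses $\delta$. Since $\delta$ is separating and passes through the basepoint $p$, I would first set up a normal form: every element $[\gamma]\in\pi_1(S,p)$ can be written as a word $w_1 w_2 \cdots w_k$ where each $w_j$ is a loop lying entirely on one side of $\delta$, and the action of $T_\delta$ replaces this word by inserting a power of $[\delta]$ (or $[\delta]^{-1}$) between consecutive syllables according to the side-crossing pattern. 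Concretely, I expect $T_\delta([\gamma])$ to be obtained from $[\gamma]$ by a substitution that is multiplicative in a controlled way, so that the map $[\gamma]\mapsto T_\delta([\gamma])$ differs from the identity by a correction governed entirely by the homological intersection data with $[\delta]$.

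Next I would exploit the twisted cocycle relation $\lambda(\gamma\gamma')=\lambda(\gamma)+\alpha(\gamma)\lambda(\gamma')$ to turn this group-level substitution into an affine formula in $\lambda$. The point is that $\lambda(T_\delta([\gamma]))$, being a cocycle evaluated on the word-in-$[\delta]$, expands into a sum of terms $\lambda([\gamma])$ plus terms of the form (coefficient)$\cdot\lambda([\delta])$, because whenever $[\delta]$ is inserted its contribution factors through $\lambda([\delta])$ weighted by $\alpha$ evaluated on the preceding portion of the word. Collecting all these weights defines a function $[\gamma]\mapsto\mu([\gamma])$ sitting in front of $\lambda([\delta])$, and the remaining terms reassemble into $\lambda([\gamma])$ itself (this uses that $\alpha([\delta])=1$, which holds because $\delta$ separates and is therefore null-homologous, so $\alpha$ takes the value $1$ on it and the $\alpha$-twisting does not disturb the $\lambda([\gamma])$ part). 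This is the computational heart: I would verify the formula
$$ \lambda(T_\delta([\gamma])) = \mu([\gamma])\,\lambda([\delta]) + \lambda([\gamma]) $$
first on the generators $a_i,b_i$, then propagate it to arbitrary words by induction on word length using the cocycle relation.

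The final step is to check that $\mu$ is itself a twisted cocycle, i.e. $\mu\in Z^1_\alpha(\pi_1(S,p),\mathbb{C})$. This should come out of the computation rather than requiring separate work: since $T_\delta$ is a group automorphism and $\lambda\mapsto\lambda\circ T_\delta$ is a linear automorphism of $Z^1_\alpha$, applying the identity above twice (to a product $\gamma\gamma'$, expanding both sides with the cocycle relation) forces $\mu$ to satisfy exactly the $\alpha$-twisted cocycle identity $\mu(\gamma\gamma')=\mu(\gamma)+\alpha(\gamma)\mu(\gamma')$. I would make this the cleanest way to produce $\mu$: define it by the formula extracted from the generators, and then deduce cocyclicity from consistency of the two expansions.

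The main obstacle I anticipate is the bookkeeping in the normal form for $T_\delta([\gamma])$: getting the insertion pattern of $[\delta]^{\pm1}$ correct for a general word, keeping track of signs and of on-which-side each syllable lives, and ensuring the $\alpha$-weights telescope correctly. The separating hypothesis is what makes this tractable, since it guarantees $\alpha([\delta])=1$ and splits $\pi_1$ as a free product with amalgamation over $\langle[\delta]\rangle$; the crux will be organizing the amalgamated-product structure so that the correction term genuinely collapses to a single multiple of $\lambda([\delta])$ rather than a more complicated expression.
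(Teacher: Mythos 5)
Your proposal is correct and follows essentially the same route as the paper: describe $T_\delta([\gamma])$ as $[\gamma]$ with conjugates of $[\delta]^{\pm 1}$ inserted at each crossing, expand with the twisted cocycle relation using $\alpha([\delta])=1$ (from the separating hypothesis) to isolate the coefficient of $\lambda([\delta])$ as $\mu([\gamma])$, and deduce that $\mu$ is an $\alpha$-cocycle from the fact that $T_\delta$ is a group automorphism. The only cosmetic difference is that the paper computes directly on an arbitrary loop via its ordered intersection points $q_1,\dots,q_k$ with $\delta$ (writing $T_\delta([\gamma])=[\gamma]\prod_i[\beta_i]^{-1}[\delta]^{(-1)^{i+1}}[\beta_i]$), rather than verifying on generators and inducting on word length.
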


\begin{proof}

Let $p \in S$ be the base point of $\pi_1 S$. We assume that all closed curves will be based at $p$, unless explicitly mentioned. Let $[\gamma]$ be a class  in $\pi_1 S$ and $\gamma \in [\gamma]$ such that $\gamma$ intersects $\delta$ transversally. Let $q_1, ..., q_k$ be the intersection points of $\gamma$ and $\delta$ in the order along $\gamma$. Let $q_0=p $.

\noindent Let $\beta_i$ be the closed curve going from $p$ to $q_i$ through $\gamma$ and going through $\delta$ (in the positive direction if $(-1)^{i+1} = 1$ or in the negative sens if $(-1)^{i+1}= -1$) until  $p$. Hence :

$$ T_{\delta}([\gamma]) = [\gamma][\beta_k]^{-1}[\delta]^{(-1)^{k+1}}[\beta_k] \cdots[\beta_2]^{-1}[\delta]^{-1}[\beta_2] [\beta_1]^{-1}[\delta][\beta_1] $$
$$ T_{\delta}([\gamma]) = [\gamma] \prod_{i=k}^1{[\beta_i]^{-1}[\delta]^{(-1)^{i+1}}[\beta_i]} $$

\noindent To see this, one just notices that the image of $\gamma$ through $ T_{\delta}$ is a closed path obtained by following $\gamma$ from $p$ to the first intersection point $q_1$, then following $\delta$ in the positive direction until coming back to $q_1$, then following $\gamma$ between $q_1$ and $q_2$, then following $\delta$ in the negative direction until coming back to $q_2$, etc. This path can be deformed into $ \gamma \beta_k^{-1} \delta^{(-1)^{k+1}}\beta_k \cdots \beta_2^{-1}\delta^{-1}\beta_1 \beta_1^{-1}\delta\beta_1$, adding a path going from $q_i$ to $p$ through $\gamma$ in the negative direction and then coming back to $q_i$ from $p$ in the positive direction, right after each time the path travels across $\delta$ .

\noindent We now compute $\lambda(T_{\delta}([\gamma]))$ 

$$  \lambda(T_{\delta}([\gamma]) = \lambda([\delta]\prod_{i=k}^1{[\beta_i]^{-1}[\delta]^{(-1)^{i+1}}[\beta_i]}) $$ 

\noindent Since $\delta$ is separating, $\alpha([\delta]) = 1$. Using the fact that $\lambda$ is a cocyle (\textit{i.e.} for all classes $[\gamma_1]$, $[\gamma_2] \in \pi_1(S,p)$, $\lambda([\gamma_1] \cdot [\gamma_2]) = \lambda([\gamma_1]) + \alpha([\gamma_1])\lambda([\gamma_2])$), one finds

$$ \lambda(T_{\delta}([\gamma]) = \lambda([\gamma]) +  \lambda([\delta]) \alpha([\gamma]) \cdot \sum_{i=1}^k{(-1)^{i+1}\alpha([\beta_i])}
$$

\noindent and $\mu(\gamma) = \alpha(\gamma)\sum_{i=1}^n{(-1)^{i+1}\alpha^{-1}([\beta_i])}$. It remains to check that $\mu$ is an element of $Z^1_{\alpha}(\pi_1(S,p), \mathbb{C})$ which can be seen by remarking that the image under $T_{\delta}$ of the product of two closed curves is the product of their images and computing using the formulas above.

\end{proof}

\subsection{Action of a subgroup generated by two Dehn twists.}

\begin{figure}[!h]

\label{2courbes}

\centering
   \includegraphics[scale=0.5]{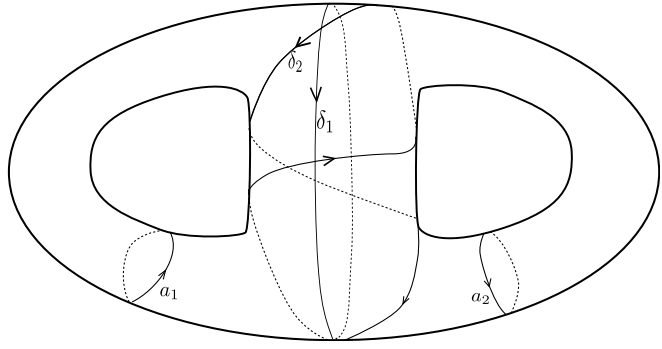}

\caption{The curves $\delta_1$,$\delta_2$, $a_1$ and $a_2$.}   
   
\end{figure}

Let us consider the curves  $\delta_1$ and $\delta_2$ from Figure 1. The Dehn twists along those curves generate a subgroup $G \subset \mathcal{I}(S)$. 

\noindent Let $T_i$ be the  automorphism of $\Gamma$ induced by the Dehn twist along $\delta_i$. $T_i$ acts on $\mathrm{Z}^1_{\alpha}(\Gamma, \mathbb{C)}$ preserving the line generated by $(1 - \alpha)$. Lemma \ref{calcul} ensures  that the left action of $T_i^{-1}$ is  $T_i^{-1} \cdot \lambda = \lambda \circ T_i= \lambda + \varphi_i \cdot \mu_i $ where $\mu_i \in \mathrm{Z}^1_{\alpha}(\Gamma, \mathbb{C)}$ and $\varphi_i $ is the linear form $ \lambda \mapsto \lambda([\delta_i] )$, verifying $\varphi_i(\mu_i) = 0$. $a_1$ and $a_2$ are the curves drawn on Figure $1$.

\begin{proposition}
\label{calculaction}

\noindent 

\begin{enumerate}
\item $ \mu_1(\delta_1) = (1 - \alpha(a_1)^{-1})\cdot(1 - \alpha(a_2)^{-1}) $
\item $ \mu_2(\delta_2) = (1 - \alpha(a_1)) \cdot(1 - \alpha(a_2))  $
\item $ \mu_1(\delta_1) = 0$
\item $ \mu_2(\delta_2) = 0$ 

\end{enumerate}

\end{proposition}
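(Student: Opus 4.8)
The plan is to separate the two ``diagonal'' evaluations $\mu_i([\delta_i])$, which are essentially free, from the two ``off-diagonal'' ones $\mu_i([\delta_j])$ with $i\neq j$, which carry all the content. The diagonal values need no computation: the relation $\varphi_i(\mu_i)=0$ recorded just before the statement is, by the definition of $\varphi_i$, exactly the assertion $\mu_i([\delta_i])=0$. Conceptually this reflects that a Dehn twist is the identity on the core of the annulus supporting it, so that $T_{\delta_i}([\delta_i])=[\delta_i]$ in $\pi_1(S,p)$; substituting $[\gamma]=[\delta_i]$ into Lemma \ref{calcul} forces $\mu_i([\delta_i])\,\lambda([\delta_i])=0$ for every cocycle $\lambda$, hence $\mu_i([\delta_i])=0$.

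For the off-diagonal terms I would apply the explicit formula $\mu_i([\gamma])=\alpha([\gamma])\sum_l(-1)^{l+1}\alpha^{-1}([\beta_l])$ of Lemma \ref{calcul} directly. To obtain $\mu_1([\delta_2])$, take $\gamma$ to be the representative of $[\delta_2]$ of Figure 1, isotoped to meet $\delta_1$ transversally, and list its intersection points $q_1,\dots,q_k$ with $\delta_1$ in the order encountered along $\delta_2$. For each $q_l$ one builds the loop $\beta_l$ as in the lemma (run along $\delta_2$ from $p$ to $q_l$, then close up along $\delta_1$), and the crux is to recognise its class. Figure 1 is arranged so that there are exactly four crossings, the loops $\beta_l$ wind around the two handles so that $\alpha([\beta_l])$ runs through $1,\ \alpha(a_1),\ \alpha(a_2),\ \alpha(a_1a_2)$, and the transverse signs realise the alternation $(-1)^{l+1}$. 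Since $\delta_2$ is separating, $\alpha([\delta_2])=1$, so that
$$ \mu_1([\delta_2]) = 1 - \alpha(a_1)^{-1} - \alpha(a_2)^{-1} + \alpha(a_1)^{-1}\alpha(a_2)^{-1} = \bigl(1-\alpha(a_1)^{-1}\bigr)\bigl(1-\alpha(a_2)^{-1}\bigr), $$
the factorisation using $\alpha(a_1a_2)=\alpha(a_1)\alpha(a_2)$. The value $\mu_2([\delta_1])$ follows from the identical argument with $\delta_1$ and $\delta_2$ interchanged: the picture is symmetric but the roles of traversed and twisting curve are swapped, so the auxiliary loops wind in the opposite sense, turning every $\alpha^{-1}$ into $\alpha$ and giving $\mu_2([\delta_1])=(1-\alpha(a_1))(1-\alpha(a_2))$.

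The one genuinely delicate step, and the one I expect to be the main obstacle, is the middle one: reading off Figure 1 without error the number of intersection points, their cyclic order along the traversed curve, the sign of each crossing (which fixes the exponent $(-1)^{l+1}$), and above all the free homotopy class of each auxiliary loop $\beta_l$ in terms of $a_1$ and $a_2$. Once this combinatorial data is pinned down, the statement reduces to a mechanical substitution into the cocycle formula of Lemma \ref{calcul} and the elementary factorisation above.
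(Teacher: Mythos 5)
Your proposal follows the paper's proof essentially verbatim: the diagonal values vanish because a simple closed curve has no self-intersection (equivalently, $T_{\delta_i}$ fixes $[\delta_i]$, so the sum in Lemma \ref{calcul} is empty), and the off-diagonal values are obtained by listing the auxiliary loops $\beta_l$ from Figure 1 --- trivial, $a_1^{\pm 1}$, $(a_1a_2)^{\pm 1}$, $a_2^{\pm 1}$ --- and substituting into the cocycle formula with alternating signs, exactly as the paper does. The combinatorial bookkeeping you single out as the delicate step is precisely what the paper's proof carries out by inspection of the figure, and your factorisation agrees with its conclusion.
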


\begin{proof}

\noindent The two last inequalities follow directly  from the fact that a simple closed curve does not self-intersect.

\noindent Write $\mu_1(\delta_2) = \sum_{i=1}^n{\epsilon(i)\alpha^{-1}([\beta_i])}$ according to Proposition \ref{calculaction}. Let us compute the $\beta_i$ using the algorithm described in the proof of Lemma \ref{calcul}.  $\beta_1$ is null-homotopic since $\delta_1$ and $\delta_2$ first intersect at $p$.

\begin{figure}[H]

\label{intersection}

\centering
   \includegraphics[scale=0.6]{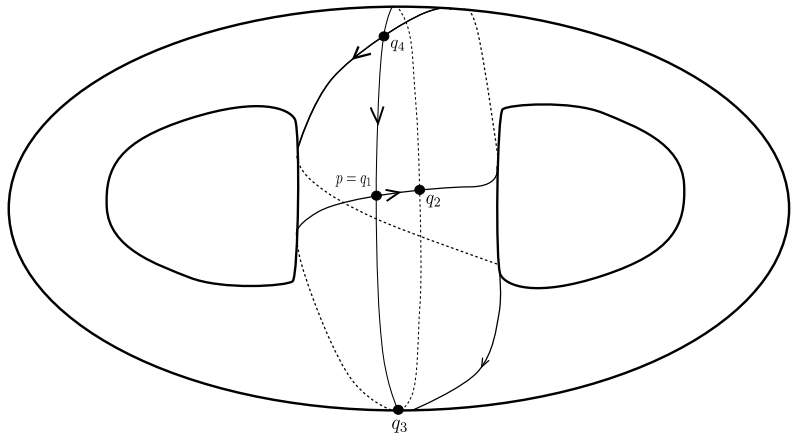}
\caption{Combinatorics of the intersections between $\delta_1$ and $\delta_2$}
   
\end{figure}

\noindent  $\beta_2$ is the curve built following $\delta_2$ from $p$ to $q_2$ then going to $p$ following $\delta_1$. This gives the following curve :

\begin{figure}[H]

\label{intersection}

\centering
   \includegraphics[scale=0.4]{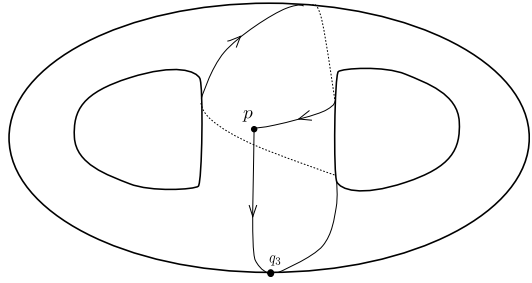}

\caption{The curve $\beta_2$}
   
\end{figure}

\noindent The curve $\beta_2$ is homologuous to $a_1^{-1}$. Proceeding with the algorithm, one finds : 

\begin{itemize}

\item $\beta_1$ is homologuous to $0$.
\item $\beta_2$ is homologuous to $a_1^{-1}$.
\item $\beta_3$ is homologuous to $a_1^{-1}a_2^{-1}$.
\item $\beta_4$ is homologuous to $a_2^{-1}$
\end{itemize}

\noindent This gives $ \mu_1(\delta_2) = 1 -\alpha(a_1) + \alpha(a_1) \alpha(a_2) - \alpha(a_2)$. A likewise calculation gives the value of $ \mu_2(\delta_1)$.

\end{proof}

\begin{proposition}

 $[\mu_1]$ and $[\mu_2]  \in \mathrm{H}_{\alpha}^1(\Gamma,\mathbb{C})$  form a basis of $\mathrm{H}_{\alpha}^1(\Gamma,\mathbb{C})$ for all $\alpha$ in a dense set open set of full measure.

\end{proposition}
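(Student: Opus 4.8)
The plan is to observe that, for a fixed $\alpha$, the property ``$[\mu_1],[\mu_2]$ form a basis of $\mathrm{H}^1_\alpha(\Gamma,\mathbb{C})$'' amounts to the non-vanishing of a single determinant $D(\alpha)$, and that $D$ is a Laurent polynomial in the coordinates $\alpha(a_i),\alpha(b_i)$ of $\mathrm{H}^1(S,\mathbb{C}^*)\simeq(\mathbb{C}^*)^{2g}$. The genericity statement then follows from a soft principle: a Laurent polynomial on the connected complex manifold $(\mathbb{C}^*)^{2g}$ that does not vanish identically has for zero locus a proper analytic hypersurface, which is closed, nowhere dense and Lebesgue-negligible, so its complement is open, dense and of full measure. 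Thus everything reduces to exhibiting $D$ and checking $D\not\equiv 0$.

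To produce $D$ I would pair $[\mu_1],[\mu_2]$ against the two linear forms $\varphi_i\colon Z^1_\alpha(\Gamma,\mathbb{C})\to\mathbb{C}$, $\varphi_i(\lambda)=\lambda([\delta_i])$, introduced just before Proposition \ref{calculaction}. Since $\delta_i$ is separating one has $\alpha([\delta_i])=1$, so $\varphi_i$ annihilates the coboundary line $\mathbb{C}\cdot(1-\alpha)$ and descends to a linear form $\bar\varphi_i$ on $\mathrm{H}^1_\alpha(\Gamma,\mathbb{C})$. I then set $D(\alpha)=\det\big(\bar\varphi_i([\mu_j])\big)_{1\le i,j\le 2}=\det\big(\mu_j([\delta_i])\big)_{1\le i,j\le 2}$. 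A one-line pairing argument shows that $D(\alpha)\neq 0$ forces $[\mu_1],[\mu_2]$ to be linearly independent: a relation $c_1[\mu_1]+c_2[\mu_2]=0$ evaluated by $\bar\varphi_1,\bar\varphi_2$ gives a homogeneous linear system with matrix $\big(\bar\varphi_i([\mu_j])\big)$, whose invertibility imposes $c_1=c_2=0$. Establishing this independence is the crux; together with the dimension count $\dim_{\mathbb{C}}\mathrm{H}^1_\alpha(\Gamma,\mathbb{C})=2g-2$ of Proposition 3 it yields the basis property.

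The whole content is therefore concentrated in Proposition \ref{calculaction}, which determines the four entries of the pairing matrix. The diagonal vanishes, $\mu_1([\delta_1])=\mu_2([\delta_2])=0$, because a simple closed curve does not meet itself, while the off-diagonal entries $\mu_2([\delta_1])$ and $\mu_1([\delta_2])$ are the explicit products of the shape $(1-\alpha(a_1))(1-\alpha(a_2))$ (and its inverse counterpart) obtained there from the combinatorial computation of the curves $\beta_i$. Hence the matrix is anti-diagonal and $D(\alpha)=-\mu_1([\delta_2])\,\mu_2([\delta_1])$ is, up to sign, a product of factors $1-\alpha(a_j)^{\pm 1}$. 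This is manifestly a nonzero Laurent polynomial, being non-zero at every $\alpha$ with $\alpha(a_1)\neq 1$ and $\alpha(a_2)\neq 1$; so $D\not\equiv 0$, and the principle recalled above finishes the proof, the good set containing at least $\{\alpha : \alpha(a_1)\neq 1,\ \alpha(a_2)\neq 1\}$.

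The only genuinely non-formal step, and hence the main obstacle, is precisely the verification that $D\not\equiv 0$: a priori the two classes could be proportional in $\mathrm{H}^1_\alpha(\Gamma,\mathbb{C})$ for every $\alpha$, and it is exactly the anti-diagonal computation of Proposition \ref{calculaction} — itself resting on the homology classes of the intersection curves $\beta_i$ identified in Lemma \ref{calcul} — that excludes this. Once the off-diagonal entries are known to be non-trivial, the independence, the basis property and the open-dense-full-measure conclusion are automatic; moreover the same pairing scheme, applied to the full family of separating curves alluded to in the introduction, is what one would use to obtain the analogous statement in higher genus.
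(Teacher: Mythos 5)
Your proposal is correct and follows essentially the same route as the paper: the paper also evaluates a putative linear relation $a\mu_1+b\mu_2+c(1-\alpha)=0$ against the forms $\lambda\mapsto\lambda([\delta_1])$, $\lambda\mapsto\lambda([\delta_2])$, uses the anti-diagonal pairing matrix from Proposition \ref{calculaction} to force $a=b=c=0$, and takes the good set to be exactly where $(1-\alpha(a_1)^{\pm1})(1-\alpha(a_2)^{\pm1})$ do not vanish. Your packaging of this as the non-vanishing of the determinant $D(\alpha)=-\mu_1([\delta_2])\,\mu_2([\delta_1])$ is just a reformulation of the same argument, and you correctly read the intended off-diagonal/diagonal entries of Proposition \ref{calculaction}.
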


\begin{proof}

Assume there exists constants $a,b,c$ such that 

$$ a \mu_1 + b \mu_2 + c(1 - \alpha) = 0 $$ 

\noindent Evaluating on $\delta_1$ and $\delta_2$ , one finds $0 = a \mu_1(\delta_2) = b \mu_2(\delta_1)$.  For $\alpha$ in a dense open set of full measure(the set of $\alpha$ such that  $(1 - \alpha(a_1)^{-1})(1 - \alpha(a_2)^{-1})$ and $ (1 - \alpha(a_1))(1 - \alpha(a_2)) $ do not vanish), $a=b=0$, and so $c=0$.

\end{proof}

\noindent Matrices of $T_1^{-1}$ and $T_2^{-1}$ in this basis are : 

  $$  \left( \begin{array}{cc}
     1 & (1 - \alpha^{-1}(a_1))(1 - \alpha(a_2)^{-1})  \\
      0 & 1   \\
   \end{array} \right) , \left(          \begin{array}{cc}
      1 &  0 \\
      (1 - \alpha(a_1))(1 - \alpha(a_2))   & 1 \\
   \end{array} \right) $$

\subsection{A criterion for ergodicity. }

\begin{lemme}[Jorgensen]

If two matrices $A$ and $B$ generate a non-elementary discrete subgroup of  $\mathrm{PSL}(2, \mathbb{C})$ then 

$$ |\mathrm{Tr}(A)^2 - 4| + |\mathrm{Tr}(ABA^{-1}B^{-1}) - 2 | \geq 1 $$  

\end{lemme}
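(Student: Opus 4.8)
The plan is to argue by contradiction and run Jørgensen's classical iteration scheme: assuming the two quantities sum to less than $1$, I aim to contradict either discreteness or non-elementarity of $\langle A, B\rangle$. First I would normalize. Since the group is non-elementary, $A \neq \pm \mathrm{Id}$, so after conjugation in $\mathrm{PSL}(2,\mathbb{C})$ I may put $A$ into standard form. The main case is $A$ loxodromic or elliptic with $\mathrm{Tr}(A)^2 \neq 4$, where I diagonalize
$$ A = \begin{pmatrix} u & 0 \\ 0 & u^{-1} \end{pmatrix}, $$
so that $\mathrm{Tr}(A)^2 - 4 = (u - u^{-1})^2$; the parabolic case $\mathrm{Tr}(A)^2 = 4$ (where $A$ is conjugate to the unipotent $\left(\begin{smallmatrix} 1 & 1 \\ 0 & 1\end{smallmatrix}\right)$) is treated by an entirely parallel computation and I would postpone it. Writing $B = \left(\begin{smallmatrix} a & b \\ c & d\end{smallmatrix}\right)$, a direct computation yields the key trace identity
$$ \mathrm{Tr}(ABA^{-1}B^{-1}) - 2 = -\,bc\,(u - u^{-1})^2 = -\,bc\,(\mathrm{Tr}(A)^2 - 4), $$
so the contradiction hypothesis reads $|\mathrm{Tr}(A)^2 - 4|\,(1 + |bc|) < 1$.

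Next I would iterate conjugation by $A$. Set $B_0 = B$ and $B_{n+1} = B_n A B_n^{-1}$; every $B_n$ lies in $\langle A, B\rangle$ and is conjugate to $A$. Writing $B_n = \left(\begin{smallmatrix} a_n & b_n \\ c_n & d_n\end{smallmatrix}\right)$, the entrywise recursion is
$$ b_{n+1} = a_n b_n (u^{-1} - u), \qquad c_{n+1} = c_n d_n (u - u^{-1}), \qquad a_n d_n - b_n c_n = 1, $$
from which, setting $z_n = b_n c_n$ and $t = (u - u^{-1})^2 = \mathrm{Tr}(A)^2 - 4$, one obtains the scalar recursion
$$ z_{n+1} = -\,t\,(1 + z_n)\,z_n. $$
Under the hypothesis $|t|(1 + |z_0|) < 1$ one checks by induction that $|z_{n+1}| \le |t|(1 + |z_0|)\,|z_n| = \mu\,|z_n|$ with $\mu < 1$; hence $z_n = b_n c_n \to 0$ geometrically, and the diagonal entries satisfy $a_n \to u$, $d_n \to u^{-1}$.

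Finally I would convert this decay into a contradiction with discreteness. The vanishing of the off-diagonal product $b_n c_n$ means $\mathrm{Tr}(A B_n A^{-1} B_n^{-1}) \to 2$, i.e. the conjugates $B_n$ are asymptotically forced into the stabilizer of the fixed-point set $\{0, \infty\}$ of $A$. The standard Jørgensen dichotomy then applies: discreteness of $\langle A, B\rangle$ forbids an infinite family of distinct elements accumulating in this way, so the iteration must degenerate—either $z_0 = 0$ and some $B_N$ already fixes a fixed point of $A$, or the sequence stabilises—and tracing the recursion backwards shows that $B$ itself must preserve $\{0, \infty\}$, making $\langle A, B\rangle$ elementary and contradicting the hypothesis. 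I expect this last step to be the genuine obstacle: the decay $b_n c_n \to 0$ does \emph{not} by itself bound the individual matrices $B_n$ (the entry $b_n$ may diverge while $c_n \to 0$), so the contradiction must be extracted from the group-theoretic and fixed-point structure rather than from naive boundedness—precisely the delicate heart of Jørgensen's argument. The parabolic case is then dispatched by the same iteration applied to the unipotent normal form.
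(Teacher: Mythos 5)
Note first that the paper does not actually prove this lemma: it is quoted as a known result with a citation to J\o{}rgensen's original 1976 article, so the only meaningful comparison is with the classical proof you are trying to reconstruct. Your setup is correct and matches that proof: the normalization of $A$, the identity $\mathrm{Tr}(ABA^{-1}B^{-1})-2=-bc\,(\mathrm{Tr}(A)^2-4)$, the iteration $B_{n+1}=B_nAB_n^{-1}$, the entrywise recursion, the scalar recursion $z_{n+1}=-t\,z_n(1+z_n)$, and the induction giving $|z_n|\le\mu^n|z_0|\to 0$ with $a_n\to u$, $d_n\to u^{-1}$ are all accurate computations.

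The genuine gap is exactly where you place it, and flagging it does not fill it. From $b_nc_n\to 0$ you cannot conclude that the $B_n$ ``accumulate'' anywhere: as you observe, $b_n$ may diverge while $c_n\to 0$, so no contradiction with discreteness is yet available, and invoking ``the standard J\o{}rgensen dichotomy'' at this point is circular --- that dichotomy \emph{is} the content of the remaining argument. The missing ingredients are: (i) a reason why some sequence of \emph{distinct} group elements converges, which in the classical proof comes from a further estimate on the off-diagonal entries separately --- one uses $|1-u^2|\,|1-u^{-2}|=|t|<1$, so after conjugating by the Weyl element to exchange the roles of $b$ and $c$ one may assume $|1-u^2|<1$ and deduce $b_n\to 0$ geometrically, and then a two-step comparison controls $c_n$ as well, forcing $B_{n+1}\to A$ along the relevant subsequence; (ii) the discreteness step, which upgrades this convergence to $B_{n}AB_n^{-1}=A$, i.e.\ $b_N=c_N=0$, for some finite $N$; and (iii) the backward induction on the recursion showing that $B_N$ permuting $\{0,\infty\}$ forces each earlier $B_n$, and finally $B$ itself, to permute the fixed-point set of $A$, contradicting non-elementarity. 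Steps (i) and (ii) are the technical heart of the lemma and are absent from your sketch; the parabolic case, which requires a parallel but distinct estimate (the recursion there is $z_{n+1}=z_n(z_n+\text{const})$ in different variables), is also deferred without being carried out. As written, the proposal is a correct outline of the first half of J\o{}rgensen's argument but not a proof.
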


\noindent This lemma is proven in \cite{MR0427627}. 

\vspace{2mm}

\noindent Let us compute the quantity of the lemma for $A= \left( \begin{array}{cc}
     1 &  a \\
      0 & 1   \\
   \end{array} \right) $ and $B= \left( \begin{array}{cc}
     1 &  0 \\
      b & 1   \\
   \end{array} \right) $.

$$ \mathrm{Tr}(ABA^{-1}B^{-1}) = 2 + (ab)^2 $$
$$ \mathrm{Tr}(A) = 2 $$ 

\noindent So if $A$ and $B$ generate a non-elementary subgroup and if $|ab| < 1$, $\langle A, B \rangle$ is not discrete. One the other hand, it is clear that when $a$ and $b$ are nonzero, the group generated by $A$ and $B$ is non-elementary. In that case, $A$ acts by translations on $\mathbb{CP}^1$, the only point of finite orbit for $A$ is the point at infinity. But since $b \neq 0$, $B$ sends the point at infinity on $0$ which has infinite orbit for the action of $A$.

\begin{proposition}

If  $H$  is a non-discrete and non-elementary subgroup of $\mathrm{SL}(2,\mathbb{C}) $, then $\overline{H}  $  is either all $\mathrm{SL}(2,\mathbb{C}) $ or conjugate to $\mathrm{SL}(2,\mathbb{R})$, a $\mathbb{Z}/2\mathbb{Z}$-extension of  $\mathrm{SL}(2,\mathbb{R})$, $\mathrm{SU}(2)$ or a finite extension of $\mathrm{SU}(2)$. 

\end{proposition}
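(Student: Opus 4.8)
The plan is to work with the closure $G=\overline{H}$, which is a closed, hence embedded Lie, subgroup of $\mathrm{SL}(2,\mathbb{C})$; since $H$ is non-discrete, $G$ has positive dimension. The entire statement is a classification of such $G$, so I would reduce everything to understanding the identity component $G_0$ and its Lie algebra $\mathfrak{g}_0\subseteq\mathfrak{sl}(2,\mathbb{C})$ (a \emph{real} subalgebra of the $6$-dimensional real Lie algebra), together with the finite group of components $G/G_0$. The guiding principle is that $G_0$ is automatically normal in $G$, so $G$ is squeezed between $G_0$ and the normalizer $N_{\mathrm{SL}(2,\mathbb{C})}(G_0)$, and the classification will follow once I pin down $\mathfrak{g}_0$ and compute these normalizers.

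First I would show that $G_0$ is non-solvable. Suppose not: a connected solvable subgroup of $\mathrm{SL}(2,\mathbb{C})$ is triangularizable by Lie--Kolchin, hence fixes a point of $\mathbb{CP}^1$, and since $\dim G_0\geq 1$ it acts nontrivially, so its common fixed set $F\subset\mathbb{CP}^1$ is nonempty and finite (at most two points). Because $G$ normalizes $G_0$, it permutes $F$, giving a finite orbit; then $H\subseteq G$ would be elementary, a contradiction. Thus $\mathfrak{g}_0$ is non-solvable, so its Levi decomposition has a nonzero semisimple part $\mathfrak{s}$. As semisimple real Lie algebras have dimension $3,6,8,\dots$ and $\dim_\mathbb{R}\mathfrak{g}_0\leq 6$, either $\mathfrak{s}=\mathfrak{sl}(2,\mathbb{C})$ (whence $\mathfrak{g}_0=\mathfrak{sl}(2,\mathbb{C})$), or $\dim_\mathbb{R}\mathfrak{s}=3$, in which case $\mathfrak{s}$ is one of the two real forms $\mathfrak{sl}(2,\mathbb{R})$ or $\mathfrak{su}(2)$ up to conjugacy.

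The technical heart is then the maximality of $\mathfrak{sl}(2,\mathbb{R})$ and $\mathfrak{su}(2)$ as real subalgebras of $\mathfrak{sl}(2,\mathbb{C})$, which rules out the intermediate dimensions $4$ and $5$. For $\mathfrak{k}$ equal to either form I would use the Cartan decomposition $\mathfrak{sl}(2,\mathbb{C})=\mathfrak{k}\oplus\mathfrak{p}$ with $\mathfrak{p}=i\,\mathfrak{k}$, where $[\mathfrak{k},\mathfrak{p}]\subseteq\mathfrak{p}$ and $\mathfrak{p}$ is, as an $\mathrm{ad}(\mathfrak{k})$-module, the $3$-dimensional adjoint representation, which is irreducible over $\mathbb{R}$. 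If $\mathfrak{k}\subseteq\mathfrak{m}$ is a subalgebra, then $\mathfrak{k}\subseteq\mathfrak{m}$ forces $\mathfrak{m}=\mathfrak{k}\oplus(\mathfrak{m}\cap\mathfrak{p})$, and $\mathfrak{m}\cap\mathfrak{p}$ is $\mathrm{ad}(\mathfrak{k})$-invariant; irreducibility gives $\mathfrak{m}\cap\mathfrak{p}\in\{0,\mathfrak{p}\}$, i.e. $\mathfrak{m}=\mathfrak{k}$ or $\mathfrak{m}=\mathfrak{sl}(2,\mathbb{C})$. Hence $\mathfrak{g}_0$ is conjugate to exactly one of $\mathfrak{sl}(2,\mathbb{R})$, $\mathfrak{su}(2)$, or $\mathfrak{sl}(2,\mathbb{C})$.

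Finally I would recover $G$ from $G_0$ by computing normalizers, which I expect to be the most bookkeeping-heavy step. When $\mathfrak{g}_0=\mathfrak{sl}(2,\mathbb{C})$ one gets $G=\mathrm{SL}(2,\mathbb{C})$. When $G_0$ is conjugate to $\mathrm{SU}(2)$ or $\mathrm{SL}(2,\mathbb{R})$, I would use that $\mathrm{Out}$ of each form together with the centralizer (which is $\{\pm I\}$ by Schur, since the defining embedding is irreducible) controls $N_{\mathrm{SL}(2,\mathbb{C})}(G_0)/G_0$: for $\mathrm{SL}(2,\mathbb{R})$ the outer automorphism is realized by $\mathrm{diag}(i,-i)$, producing the stated $\mathbb{Z}/2$-extension, while for $\mathrm{SU}(2)$ the normalizer reduces to $\mathrm{SU}(2)$ itself (a finite, possibly trivial, extension). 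Since $G\subseteq N_{\mathrm{SL}(2,\mathbb{C})}(G_0)$ is closed with identity component $G_0$, it lands in one of the listed groups, completing the classification. The main obstacle is carrying out these normalizer identifications cleanly and confirming that no further components appear; the maximality argument above is what makes the dimension count rigid enough for this to close.
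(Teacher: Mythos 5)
Your argument is correct, and it is worth noting up front that the paper does not actually prove this proposition: it only cites a reference (p.~69 of the cited book on Kleinian groups). So your proposal is not just a different route, it is the only proof on the table, and it holds together. The skeleton is exactly right: $G=\overline{H}$ is a closed, hence embedded, Lie subgroup of positive dimension; non-elementarity kills a solvable identity component (Lie's theorem gives a common fixed point of $G_0$ on $\mathbb{CP}^1$, the common fixed set has at most two points since a nontrivial M\"obius map fixes at most two, and $G$ permutes this set because $G_0\trianglelefteq G$, producing a finite $H$-orbit); the Levi factor is then a nonzero semisimple subalgebra of real dimension $3$ or $6$; and your Cartan-decomposition maximality lemma is the step that carries all the weight, since it simultaneously kills the radical and rules out dimensions $4$ and $5$. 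Two small points you should make explicit in a final write-up: first, a $3$-dimensional simple real subalgebra $\mathfrak{s}\subset\mathfrak{sl}(2,\mathbb{C})$ is automatically a real form (if $\mathfrak{s}\cap i\mathfrak{s}\neq 0$ the complex span would be a complex subalgebra of complex dimension at most $2$, hence solvable, yet containing $\mathfrak{s}$), and isomorphic real forms are conjugate here because $\mathrm{Aut}(\mathfrak{sl}(2,\mathbb{C}))$ is inner; second, in the normalizer step one needs that the connected closed subgroup with Lie algebra $\mathfrak{su}(2)$ (resp.\ $\mathfrak{sl}(2,\mathbb{R})$) is exactly $\mathrm{SU}(2)$ (resp.\ $\mathrm{SL}(2,\mathbb{R})$), after which Schur plus $\mathrm{Out}(\mathrm{SU}(2))=1$ and the realization of the outer automorphism of $\mathrm{SL}(2,\mathbb{R})$ by $\mathrm{diag}(i,-i)$ give precisely the list in the statement (your computation in fact shows the $\mathrm{SU}(2)$ case yields $\mathrm{SU}(2)$ on the nose, which is a sharper conclusion than the statement requires). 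What your route buys is a self-contained first-principles proof; what the paper's citation buys is brevity.
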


\noindent This proposition can be found in \cite{MR2553578}(p.69).

\begin{lemme}
\label{ergodique}
Let $H$ be a subgroup of $\mathrm{SL}(n+1,\mathbb{C})$ such that the action of $\overline{H}$ on $\mathbb{CP}^n$ is transitive. Then the action of $H$ on $\mathbb{CP}^n$ is ergodic. 

\end{lemme}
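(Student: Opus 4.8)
The plan is to reduce ergodicity of $H$ to the transitivity of $\overline{H}$ in two steps: first, that every $H$-invariant measurable set is in fact $\overline{H}$-invariant; second, that a transitive smooth action is ergodic. Throughout, $\mathbb{CP}^n$ is equipped with its Fubini--Study measure $\mu$ (equivalently, its Lebesgue measure class); this measure is finite because $\mathbb{CP}^n$ is compact, and every element of $\mathrm{SL}(n+1,\mathbb{C})$ acts on $\mathbb{CP}^n$ by a real-analytic diffeomorphism, hence preserves the class of $\mu$.

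First I would fix a measurable $H$-invariant set $A$, i.e. $hA = A$ modulo $\mu$-null sets for all $h \in H$, and view it as a point of the measure algebra $\mathcal{M}$ equipped with the distance $d(B,C) = \mu(B \triangle C)$, on which $\mathrm{SL}(n+1,\mathbb{C})$ acts by $g \cdot [B] = [gB]$. The crux of this step is that this action is continuous. To see it, I would approximate $\mathbf{1}_A$ in $L^1(\mu)$ by a continuous function $\phi$; for continuous $\phi$ the map $g \mapsto \phi \circ g^{-1}$ is continuous in the sup norm, hence in $L^1(\mu)$, and the remaining error terms are controlled using that the Radon--Nikodym cocycle $d(g_*\mu)/d\mu$ is uniformly bounded for $g$ ranging over a compact neighbourhood of a given group element (the manifold being compact and the action smooth). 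Granting continuity, the stabiliser $K = \{ g : gA = A \bmod \text{null} \}$ is a closed subgroup of $\mathrm{SL}(n+1,\mathbb{C})$; since it contains $H$ it contains $\overline{H}$, so $A$ is $\overline{H}$-invariant.

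Second, assuming $A$ is $\overline{H}$-invariant with $\mu(A) > 0$, I would apply the Lebesgue density theorem to choose a density point $x_0$ of $A$. By transitivity of $\overline{H}$ every $y \in \mathbb{CP}^n$ can be written $y = g x_0$ with $g \in \overline{H}$, and since $g$ is a diffeomorphism (locally bi-Lipschitz with non-vanishing Jacobian) it carries density points of $A$ to density points of $gA = A$. Hence every point of $\mathbb{CP}^n$ is a density point of $A$, which by the density theorem forces $\mu(\mathbb{CP}^n \setminus A) = 0$. Thus any $H$-invariant set is $\mu$-null or $\mu$-conull, which is exactly ergodicity.

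The main obstacle is the continuity of the $\mathrm{SL}(n+1,\mathbb{C})$-action on the measure algebra used in the first step: it is what allows one to pass from invariance under the abstract subgroup $H$ to invariance under its closure, and it requires the approximation argument together with the uniform control of the Radon--Nikodym derivatives. The transitivity-implies-ergodicity step is then a standard density-point computation, whose only subtlety is the preservation of Lebesgue density points under $C^1$-diffeomorphisms.
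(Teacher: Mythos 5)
Your proof is correct and follows the same route the paper intends: the paper's own proof is a one-line appeal to the ``Lebesgue regularity lemma,'' i.e.\ exactly the density-point argument you carry out in your second step. You have merely supplied the details the paper leaves implicit, including the (necessary and correctly handled) preliminary step that an $H$-invariant set is automatically $\overline{H}$-invariant via continuity of the action on the measure algebra.
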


\begin{proof}

This lemma is a consequence of Lebesgue regularity lemma.

\end{proof}

\section{Proof of the main theorem in genus 2.}

The set $U$ of elements $\alpha \in  \mathrm{H}^1(S,\mathbb{C}^*) $ such that  $| (1 - \alpha(a_1))(1 - \alpha(a_2))(1 - \alpha(a_1)^{-1})(1 - \alpha(a_2)^{-1})| < 1$ and $ (1 - \alpha(a_1))(1 - \alpha(a_2))(1 - \alpha(a_1)^{-1})(1 - \alpha(a_2)^{-1}) \notin \mathbb{R}$ has positive measure (it contains an open set of $(\mathbb{C}^*)^4$ with $2$ analytic submanifolds of codimension $1$ removed). According to Proposition 8, the mapping class group action on $\mathrm{H}^1(S,\mathbb{C}^*) \simeq (\mathbb{C}^*)^4$ is ergodic, hence $V = \mathrm{Mod}(S)\cdot U $ has full measure. 

\begin{proposition}

\label{ergodicitetorelli}

For all $\alpha \in V$, the Torelli group action on  $\mathrm{PH}_{\alpha}^1(\Gamma,\mathbb{C})$ is ergodic.

\end{proposition}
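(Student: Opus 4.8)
The plan is to exploit the special feature of genus $2$, namely that $2g-2=2$, so that $\mathrm{PH}^1_\alpha(\Gamma,\mathbb{C})$ is $\mathbb{CP}^1$. The subgroup $G\subset\mathcal{I}(S)$ generated by the two Dehn twists $T_1,T_2$ acts projectively, and in the basis $[\mu_1],[\mu_2]$ its image is $\langle A,B\rangle$ with the two explicit matrices computed just before this subsection, where $a=(1-\alpha(a_1)^{-1})(1-\alpha(a_2)^{-1})$ and $b=(1-\alpha(a_1))(1-\alpha(a_2))$. Since any $\mathcal{I}(S)$-invariant set is a fortiori $G$-invariant, it suffices to prove ergodicity for $G$; and by Lemma \ref{ergodique} (with $n=1$) this reduces to showing that the closure $\overline{\langle A,B\rangle}$ acts transitively on $\mathbb{CP}^1$, i.e. that $\overline{\langle A,B\rangle}=\mathrm{SL}(2,\mathbb{C})$. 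I will establish this first for $\alpha\in U$ and then transfer it to all of $V$.

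So suppose $\alpha\in U$. Then $ab\notin\mathbb{R}$, which in particular forces $a,b\neq 0$, so $\langle A,B\rangle$ is non-elementary by the remark preceding the structure proposition. Moreover $\mathrm{Tr}(A)=2$ and $\mathrm{Tr}(ABA^{-1}B^{-1})=2+(ab)^2$, so that $|\mathrm{Tr}(A)^2-4|+|\mathrm{Tr}(ABA^{-1}B^{-1})-2|=|ab|^2<1$ by the defining inequality of $U$. Jorgensen's lemma then rules out discreteness, so $\langle A,B\rangle$ is non-discrete and non-elementary. By the classification of such subgroups recalled above, $\overline{\langle A,B\rangle}$ is either all of $\mathrm{SL}(2,\mathbb{C})$, or conjugate to $\mathrm{SL}(2,\mathbb{R})$, to a $\mathbb{Z}/2\mathbb{Z}$-extension of it, to $\mathrm{SU}(2)$, or to a finite extension of $\mathrm{SU}(2)$.

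The main obstacle is excluding these last four possibilities, and I expect the honest argument to be geometric rather than trace-based, since $ab\notin\mathbb{R}$ does \emph{not} by itself give $(ab)^2\notin\mathbb{R}$. The two compact cases are immediate: $A$ is a nontrivial parabolic element, and no compact subgroup of $\mathrm{SL}(2,\mathbb{C})$ contains a parabolic, so $\mathrm{SU}(2)$ and its finite extensions are out. For the real forms, I use that a group conjugate to $\mathrm{SL}(2,\mathbb{R})$ or to its $\mathbb{Z}/2\mathbb{Z}$-extension preserves a round circle $C\subset\mathbb{CP}^1$. The parabolics $A$ and $B$ fix $\infty$ and $0$ respectively, and the fixed point of a parabolic preserving $C$ must lie on $C$ (a translation $z\mapsto z+a$ preserves only the lines in its direction, all of which pass through its fixed point $\infty$). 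Hence $0,\infty\in C$, so $C$ is a Euclidean line through the origin, and it is parallel to $a$; since $B(\infty)=1/b$ must also lie on $C$, the point $1/b$ lies on $\mathbb{R}\cdot a$, which yields $ab\in\mathbb{R}$. This contradicts $\alpha\in U$. Therefore $\overline{\langle A,B\rangle}=\mathrm{SL}(2,\mathbb{C})$, and Lemma \ref{ergodique} gives ergodicity of the $\mathcal{I}(S)$-action on $\mathrm{PH}^1_\alpha(\Gamma,\mathbb{C})$ for every $\alpha\in U$.

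It remains to pass from $U$ to $V=\mathrm{Mod}(S)\cdot U$. Given $\alpha\in V$, write $\alpha=\Psi(g)\alpha_0$ with $g\in\mathrm{Mod}(S)$ and $\alpha_0\in U$. The mapping class $g$ carries the fiber $L^{-1}(\alpha_0)$ to the fiber $L^{-1}(\alpha)$ by a projective isomorphism of $\mathbb{CP}^1$, and because $\mathcal{I}(S)$ is normal in $\mathrm{Mod}(S)$, this isomorphism conjugates the $\mathcal{I}(S)$-action over $\alpha_0$ to the $\mathcal{I}(S)$-action over $\alpha$: for $h\in\mathcal{I}(S)$ the element $ghg^{-1}$ again lies in $\mathcal{I}(S)$ and acts on $L^{-1}(\alpha)$ as $g$-conjugate to $h$. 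Since projective isomorphisms of $\mathbb{CP}^1$ preserve the Lebesgue measure class, ergodicity is a conjugacy invariant here, and ergodicity over $\alpha_0\in U$ transports to ergodicity over $\alpha$. This proves the proposition for all $\alpha\in V$.
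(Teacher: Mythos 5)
Your proof is correct and follows the same overall route as the paper: establish ergodicity for $\alpha\in U$ by combining Jorgensen's lemma with the classification of non-discrete non-elementary subgroups of $\mathrm{SL}(2,\mathbb{C})$ and Lemma \ref{ergodique}, then transport it to all of $V=\mathrm{Mod}(S)\cdot U$ using the normality of $\mathcal{I}(S)$ in $\mathrm{Mod}(S)$ and the fact that the induced projective isomorphism between fibers preserves the Lebesgue measure class.

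The one place where you genuinely diverge is the exclusion of the real forms. The paper argues via traces, asserting that since $ab\notin\mathbb{R}$ the traces of elements of $G$ are not all real ``according to the computation made above''; as you rightly point out, the displayed computation $\mathrm{Tr}(ABA^{-1}B^{-1})=2+(ab)^2$ does not by itself suffice, since $ab$ purely imaginary would make $(ab)^2$ real. The quickest repair within the paper's own framework is to use $\mathrm{Tr}(AB)=2+ab\notin\mathbb{R}$ instead (noting that $A$ and $B$, having real trace $2$, must lie in the identity component even in the $\mathbb{Z}/2\mathbb{Z}$-extension case, so all of $\langle A,B\rangle$ would have real trace). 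Your alternative — that a group conjugate into $\mathrm{PSL}(2,\mathbb{R})$ or its $\mathbb{Z}/2\mathbb{Z}$-extension preserves a circle, that the fixed points $0$ and $\infty$ of the parabolics $A$ and $B$ must lie on it, and that $B(\infty)=1/b\in\mathbb{R}\cdot a$ forces $ab\in\mathbb{R}$ — is geometric, complete, and arguably cleaner. You also rule out the compact cases outright (no compact subgroup contains a nontrivial parabolic), which the paper does not bother to do since $\mathrm{PSU}(2)$ already acts transitively on $\mathbb{CP}^1$; this costs nothing and gives the sharper conclusion $\overline{\langle A,B\rangle}=\mathrm{SL}(2,\mathbb{C})$.
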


\begin{proof}

Consider $\alpha \in V$. Then there exists $\beta \in U$ and $ \phi \in \mathrm{Mod}(S)$ such that $\phi \cdot \beta = \alpha$. Recall that $G \subset \mathcal{I}(S)$ is the group generated by the Dehn twists along $\delta_1$ and $\delta_2$. Precomposing by $\phi$ gives a projective isomorphism :

$$\phi^* :  \mathrm{PH}_{\beta}^1(\Gamma,\mathbb{C}) \longrightarrow  \mathrm{PH}_{\alpha}^1(\Gamma,\mathbb{C})$$ 

\noindent such that the action of the groups $G$ and $\phi G \phi^{-1}$ (on $ \mathrm{PH}_{\beta}^1(\Gamma,\mathbb{C})$ and $\mathrm{PH}_{\alpha}^1(\Gamma,\mathbb{C})$ respectively) are conjugated by $\phi^*$. If $\beta \in U$, the $G$-action on $ \mathrm{PH}_{\beta}^1(\Gamma,\mathbb{C}) \simeq \mathbb{CP}^1$ is the action of a group with identity component of the closure isomorphic to $\mathrm{PSU}(2)$ or $\mathrm{PSL}(2, \mathbb{C})$ (we have assumed that $ (1 - \alpha(a_1))(1 - \alpha(a_2))(1 - \alpha(a_1)^{-1})(1 - \alpha(a_2)^{-1}) \notin \mathbb{R}$, hence the traces of the element of $G$ acting on $\mathbb{CP}^1$ do not all belong to $\mathbb{R}$, according to the computation made above. Hence one can exclude that the closure is isomorphic to  $\mathrm{PSL}(2,\mathbb{R})$ or a $\mathbb{Z}/2\mathbb{Z}$-extension of  $\mathrm{PSL}(2,\mathbb{R})$)  . Lemma \ref{ergodique} ensures that this action is ergodic, so the $\phi G \phi^{-1}$ action on $\mathrm{PH}_{\alpha}^1(\Gamma,\mathbb{C})$ is ergodic since it is conjugated to $G$ through a projective isomporhism.

\end{proof}

One can take as the Lebesgue measure on  $\chi \setminus L^{-1}(\{ 1 \}) $ the measure $m = \mu \otimes \nu_{\alpha}$ where $\mu$ is the Lebesgue measure on $\mathrm{H}^1(S,\mathbb{C}^*) $ and $(\nu_\alpha)_{\alpha \in \mathrm{H}^1(S,\mathbb{C}^*) }$ is a family of measures on $\mathrm{PH}_{\alpha}^1(\Gamma,\mathbb{C})$ depending analytically on $\alpha$. 

\vspace{2mm} 
\noindent We are now ready to end the proof of the main theorem in genus $2$. Let $A$ be a $\mathrm{Mod}(S)$-invariant measurable subset of $\chi \setminus L^{-1}(\{ 1 \}) $. If $\mu(L(A)) = 0$,  then $m(A) = 0$. Thus we can assume $\mu( L(A) ) > 0 $. Since the $\mathrm{Mod}(S)$ action on $\mathrm{H}^1(S,\mathbb{C}^*) $ is ergodic, $L(A)$ has full measure. Put $A_{\alpha} = A \cap \mathrm{PH}_{\alpha}^1(\Gamma,\mathbb{C})$. Fubini theorem implies that 

$$ m(A \cap B) = \int_{L(A \cap B)}{\nu_{\alpha}(A_{\alpha}\cap B)d\mu}  $$ 

\noindent where $B$ is any measurable subset of $\chi \setminus L^{-1}(\{ 1 \}) $.

\noindent If $m(A) > 0$, there exists $\epsilon > 0$ and a set with positive measure $W \subset L(A)$ for which $\forall \alpha \in W$, $\nu_{\alpha}(A_{\alpha}) > \epsilon$. Remind that the set $V$ has full measure so $\mu(W \cap V) > 0$. Since $\mu(W \cap V) > 0$, $\mathrm{Mod}(S)\cdot (W \cap V)$ has full measure. But if $\alpha \in \mathrm{Mod}(S)\cdot ( W \cap V) \subset V$, $\nu_{\alpha}(A_{\alpha}) > 0$ because it contains the image of a $A_{\beta}$ of a map $\phi \in \mathrm{Mod}(S)$ sending $\beta$ on $\alpha$ for a certain $\beta$ in $W \cap V$. But since $\alpha $ belongs to $V$, $\nu_{\alpha}(A_{\alpha}) > 0$  and the Torelli group action on $\mathrm{PH}_{\alpha}^1(\Gamma,\mathbb{C})$ is ergodic, $A_{\alpha}$ has full measure. So for almost all $\alpha$, 
$\nu_{\alpha}(A_{\alpha}\cap B) = \nu_{\alpha}(\mathrm{PH}_{\alpha}^1(\Gamma,\mathbb{C})\cap B)$ and 

$$ m(A \cap B) = m(B) $$

\noindent So $A$ has full measure, which proves that the action is ergodic.

\section{Higher genus.}

We proved in section \ref{ergodicitebase} that the mapping class group action on $\mathrm{H}^1(S,\mathbb{C}^*)$ is ergodic. In genus bigger than $2$, the strategy is still to study the Torelli group action in the fibers  $\mathrm{PH}_{\alpha}^1(\Gamma,\mathbb{C})$. To be more precise, we prove that for almost all $\alpha$, this action is ergodic giving explicit formulas for the action of some specific Dehn twists. 
\noindent Let $p \in S$ be the base point of $\pi_1 S = \Gamma$. Any diffeomorphism $f$ fixing $p$  whose action on  $\mathrm{H}_1(S,\mathbb{Z})$ is trivial acts linearly on $\mathrm{H}_{\alpha}^1(\Gamma,\mathbb{C})$ in such a way that the action of the class of $f$ in $\mathrm{Mod}(S)$ is the projectivized action of $f$ on $\mathrm{PH}_{\alpha}^1(\Gamma,\mathbb{C})$.
\noindent In this section we prove that we can find a subgroup of diffeomorphisms fixing $p$ whose action on $\mathrm{H}_{\alpha}^1(\Gamma,\mathbb{C})$ is ergodic.

\vspace{2mm}

In a way similar to genus $2$, one builds $2g - 2$ curves $(\delta_i, \eta_i)_{1\leq i \leq g-1}$ with the following properties :

\begin{enumerate}

\item For all $i \neq j$,the curve $\delta_i$ (respectively $\eta_i$) is disjoint from the curves $\delta_j$ and $\eta_j$.

\item For a generic $\alpha \in \mathrm{H}^1(S,\mathbb{C}^*)$ (in an open dense subset of full measure), the classes  $[\mu_1], [\nu_1], \cdots , [\mu_{g-1}], [\nu_{g-1}] $ form a basis of $\mathrm{H}_{\alpha}^1(\Gamma,\mathbb{C})$. 

\item Both the action of $T_{\delta_i}$ and $T_{\eta_i}$ stabilize the projective line associated to the plane $[\mu_i], [\nu_i]$. 

\item The group generated by  $T_{\delta_i}$ and $T_{\eta_i}$ acts projectively, the action is ergodic on the stabilized projective line  for all $i$ and for $\alpha $ in an open set. 

\item The $g-1$ groups $G_i = \langle  T_{\delta_i}, T_{\eta_i} \rangle$ commute, this way the $G = G_1 \cdots G_{g-1}$ action is a diagonal action on $\mathbb{C}^{2g-2}  \simeq \mathrm{H}^1_{\alpha}(\Gamma, \mathbb{C})  $.

\end{enumerate}

\noindent Take the genus $2$ surface from Figure 1 and cut it twice along simple closed curves, in a way to get a four holed sphere with boundary :

\begin{figure}[H]

\label{sphere}

\centering
   \includegraphics[scale=0.5]{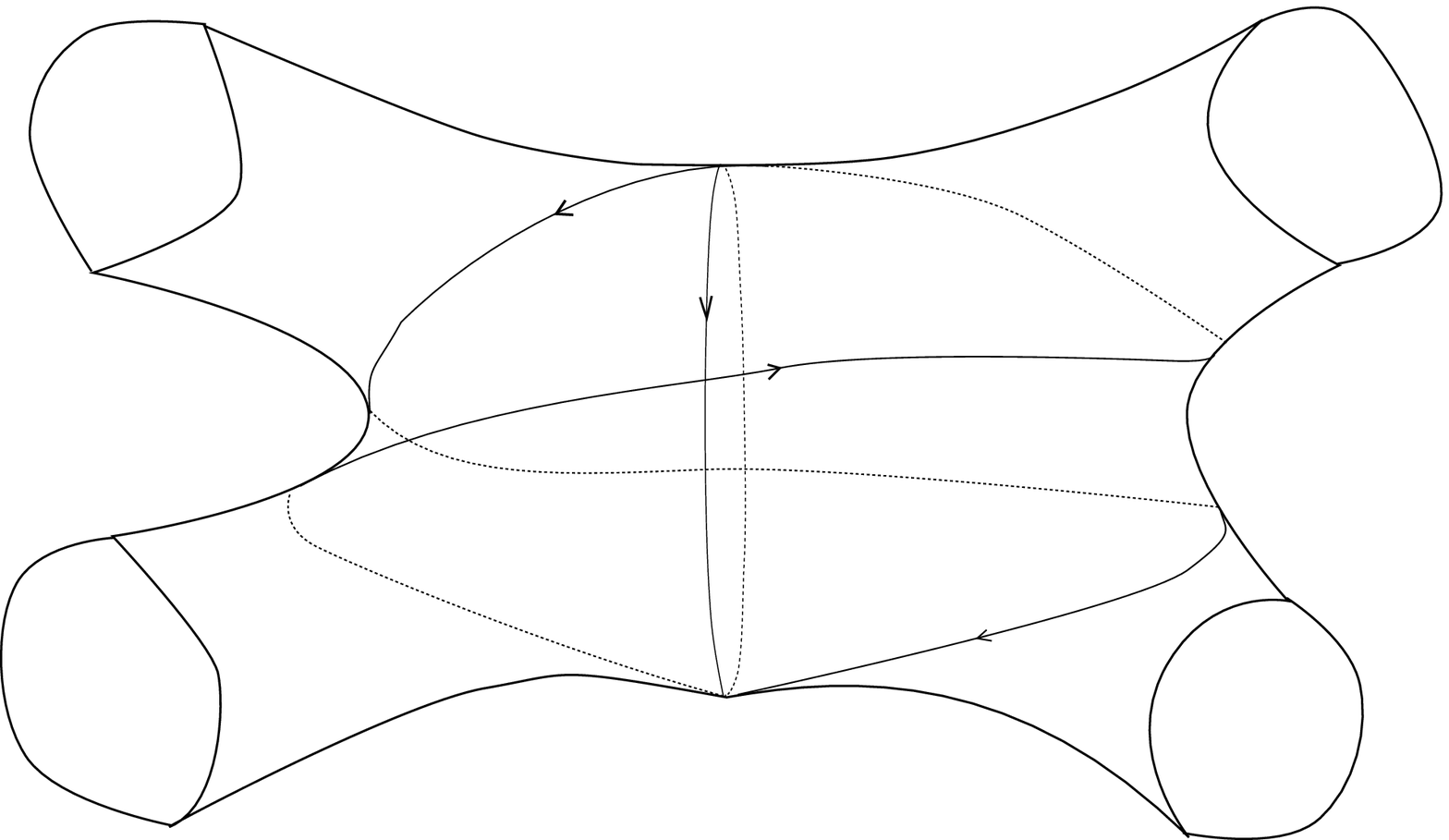}

\end{figure}

\noindent Take $g-1$ copies of this sphere, $S_1$, $S_2$, ..., $S_{g-1}$, each one carrying $2$ marked simple closed curves $\delta_i$ and $\eta_i$. Let us glue them back along the following pattern :

\begin{figure}[H]

\label{sphere}

\centering
   \includegraphics[scale=0.5]{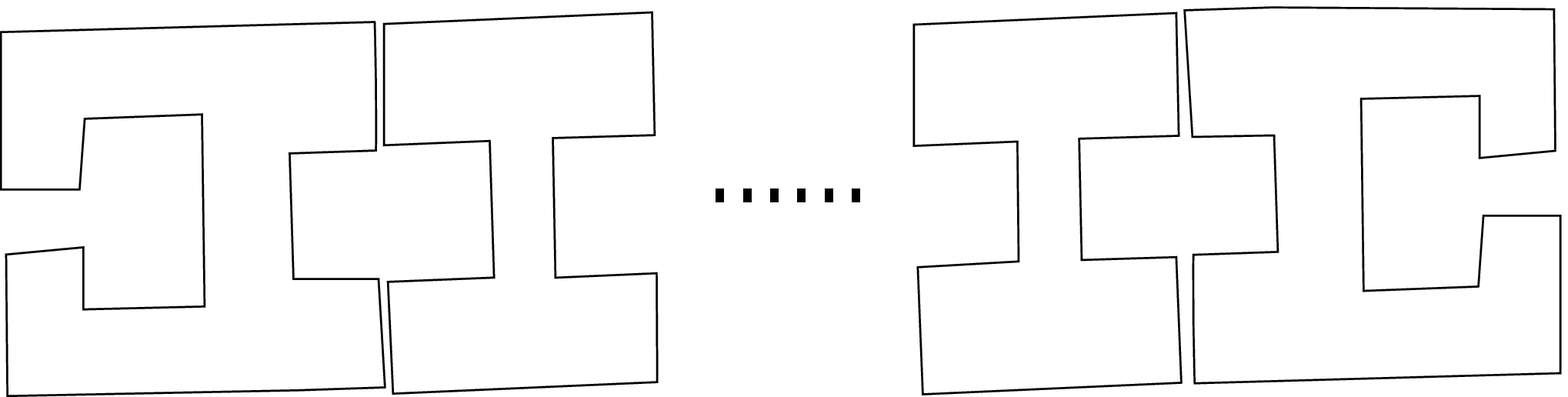}

\end{figure}

This way one gets a genus $g$ surface with the announced family of curves. For $\delta_1$, let $\tilde{\delta_1}$ be the curve built going from $p$ to $\delta_1$ through the chosen path, doing one turn of $\delta_1$ and coming back to $p$. One builds for each $\delta_i$ and $\eta_i$ a curve $\tilde{\delta_i}$ and $\tilde{\eta_i}$ in a similar way.  
\noindent Let $i \neq 1$,  $T_{\delta_i}(\tilde{\delta_1}) = \gamma \tilde{\delta_1} \gamma^{-1}$ for some $\gamma \in \Gamma$ homologuous to $\delta_i$. $\gamma \in \mathrm{D}\Gamma$ since $\delta_i$ is separating, so for all $\lambda \in  \mathrm{H}_{\alpha}^1(\Gamma,\mathbb{C})$, $\lambda(T_{\delta_i}(\tilde{\delta_1})) = \lambda(\tilde{\delta_1})$.

\noindent The same way one can define, associated to $\tilde{\delta_i}, \tilde{\eta_i}$ the cocycles $\mu_i, \nu_i$ such that :

$$ T_{\delta_i}^{-1}\cdot \lambda = \lambda + \lambda(\tilde{\delta_i}) \mu_i$$ 

$$ T_{\eta_i}^{-1}\cdot \lambda = \lambda + \lambda(\tilde{\eta_i}) \nu_i $$

\noindent for all $\lambda \in  \mathrm{H}_{\alpha}^1(\Gamma,\mathbb{C})$.

\vspace{2mm} \noindent Let us assume from now on that $\alpha$ is generic in the following sense: the field generated by the images of $\alpha$ has transcendental dimension $2g$. The set of such $\alpha$ has full Lebesgue measure.

\begin{proposition}
\noindent

\begin{enumerate}

\item

For all $i$, there exist two homology classes $a_i$ and $b_i$ such that \begin{itemize}

\item $ \mu_i(\eta_i) = (1 - \alpha(a_i)) \cdot(1 - \alpha(b_i)) $
\item $ \nu_i(\delta_i) =   (1 - \alpha(a_i)^{-1})\cdot(1 - \alpha(b_i)^{-1}) $
\item $ \mu_i(\delta_i) = 0$
\item $ \nu_i(\eta_i) = 0$ 

\end{itemize}

\item The classes $[\mu_1], [\nu_1], \cdots , [\mu_{g-1}], [\nu_{g-1}] $ span $\mathrm{H}_{\alpha}^1(\Gamma,\mathbb{C})$.

\item For all $ 1 \leq i  \leq g -1 $, the action of the group $G_i$ spanned by $T_{\delta_i}$  and $T_{\eta_i}$ stabilizes the vector space spanned by $[\mu_i]$ and $ [\nu_i]$.

\end{enumerate}
\end{proposition}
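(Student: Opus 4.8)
The plan is to reduce all three statements to the genus $2$ computations already carried out, exploiting that each pair $(\delta_i,\eta_i)$ lives inside a single four-holed sphere $S_i$ and that distinct pieces interact only trivially. The geometric content sits entirely in part (1); parts (2) and (3) are then formal consequences.

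For part (1), I would observe that $\delta_i$ and $\eta_i$ intersect one another only inside $S_i$, and that the local configuration of $(\delta_i,\eta_i)$ together with the chosen arc joining them to the base point $p$ is, by construction, a copy of the configuration $(\delta_1,\delta_2,a_1,a_2)$ of Figure~1. Running the algorithm of Lemma~\ref{calcul} for $T_{\delta_i}$ evaluated on the based representative $\tilde{\eta_i}$ therefore produces exactly the alternating sum of the $\alpha^{\pm 1}([\beta_k])$ computed in Proposition~\ref{calculaction}; the homology classes $a_i,b_i$ are read off as the classes of the connecting arcs $\beta_k$, giving $\mu_i(\eta_i)=(1-\alpha(a_i))(1-\alpha(b_i))$ and, symmetrically, $\nu_i(\delta_i)=(1-\alpha(a_i)^{-1})(1-\alpha(b_i)^{-1})$. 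The vanishing $\mu_i(\delta_i)=\nu_i(\eta_i)=0$ is the relation $\mu_i(\tilde{\delta_i})=0$ built into the definition of $\mu_i$, and reflects that a simple closed curve has no self-intersection, exactly as in the genus $2$ case.

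For part (2), since $\dim_{\mathbb{C}}\mathrm{H}^1_{\alpha}(\Gamma,\mathbb{C})=2g-2$ equals the number of classes $[\mu_i],[\nu_i]$, it suffices to prove linear independence. I would suppose a relation $\sum_i(a_i\mu_i+b_i\nu_i)=c(1-\alpha)$ in $Z^1_{\alpha}(\Gamma,\mathbb{C})$ and evaluate it on the based curves $\tilde{\delta_j}$ and $\tilde{\eta_j}$. The decisive input is property~(1): for $i\neq j$ the curve supporting $\mu_i$ (resp. $\nu_i$), namely $\delta_i$ (resp. $\eta_i$), is disjoint from $\delta_j$ and $\eta_j$, so the intersection algorithm makes all cross terms $\mu_i(\delta_j),\mu_i(\eta_j),\nu_i(\delta_j),\nu_i(\eta_j)$ vanish, and $(1-\alpha)(\delta_j)=(1-\alpha)(\eta_j)=0$ because $\delta_j,\eta_j$ are separating. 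Within the $j$-th block, relations (3)–(4) give $\mu_j(\delta_j)=\nu_j(\eta_j)=0$, so the evaluations collapse to $a_j\mu_j(\eta_j)=0$ and $b_j\nu_j(\delta_j)=0$. Under the standing genericity hypothesis (the images of $\alpha$ generate a field of transcendence degree $2g$, in particular $\alpha(a_j),\alpha(b_j)\neq 1$) the diagonal entries are nonzero, forcing all $a_j=b_j=0$ and then $c=0$. For part (3), I would use the explicit action $T_{\delta_i}^{-1}\cdot\lambda=\lambda+\lambda(\tilde{\delta_i})\mu_i$ together with $\mu_i(\tilde{\delta_i})=0$: applying it to $\mu_i$ fixes $[\mu_i]$, while applying it to $\nu_i$ gives $[\nu_i]+\nu_i(\tilde{\delta_i})[\mu_i]$, so $T_{\delta_i}^{-1}$ preserves $\langle[\mu_i],[\nu_i]\rangle$ and is unipotent on it. Symmetrically $T_{\eta_i}^{-1}$ fixes $[\nu_i]$ and shears $[\mu_i]$ using $\nu_i(\tilde{\eta_i})=0$, so $G_i=\langle T_{\delta_i},T_{\eta_i}\rangle$ stabilizes the plane $\langle[\mu_i],[\nu_i]\rangle$ with the same pair of triangular matrices as in genus $2$.

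I expect the genuine obstacle to be part (1): one must verify that the global computation on the genus $g$ surface really localizes to $S_i$, that is, that the arc connecting $p$ to $S_i$ and the twisting region meet none of the other curves. This is precisely what guarantees the vanishing of the cross terms used in part (2) and ensures the evaluated sum is literally the genus $2$ expression. Once this bookkeeping with base points and connecting paths is settled, parts (2) and (3) follow by the linear-algebra arguments above.
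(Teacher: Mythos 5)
Your proposal is correct and follows essentially the same route as the paper: part (1) by localizing to the four-holed sphere and rerunning the Lemma \ref{calcul} computation from the genus $2$ case, part (2) by evaluating a putative linear relation on the based curves $\tilde{\delta_j},\tilde{\eta_j}$ and using disjointness plus genericity, and part (3) from the explicit twist formula. You in fact supply more detail than the paper does (the vanishing of cross terms and the nonvanishing of the diagonal entries), and you correctly flag the only delicate point, namely that the connecting arcs do not spoil the localization.
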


\begin{proof}

\noindent 

\begin{enumerate}
\item The first point is exactly Proposition \ref{calculaction} extended to higher genus. The proof works the same way, applying Lemma \ref{calcul}. 

\item One writes a relation of linear dependence :

$$ \sum_i{ u_i \mu_i +  v_i  \nu_i  }= k(1- \alpha) $$ 

\noindent Evaluating in $\tilde{\delta_i}$ and $\tilde{\eta_i}$, one finds that all the coefficients $u_i$ et $v_i$ are zero, which implies $k=0$. 

\item 
Last point is a direct consequence of the remarks above the proposition. If $i \neq j$, then $\mu_i(T_{\delta_i}\tilde{\delta_j}) = \mu_i(\tilde{\delta_j})$, but since $\tilde{\delta_j}$ is homotopic to a curve disjoint from $\delta_i$, $\mu_i(\tilde{\delta_j}) = 0$. It works the same with the curves $\eta_i$, in such a way that the vector space spanned by the $[\mu_i]$ and $[\nu_i]$ is stabilized by the action of $G_i =  \langle  T_{\delta_i}, T_{\eta_i} \rangle$. 

\end{enumerate}

\end{proof}

\begin{figure}[H]

\label{courbes}

\centering
   \includegraphics[scale=0.7]{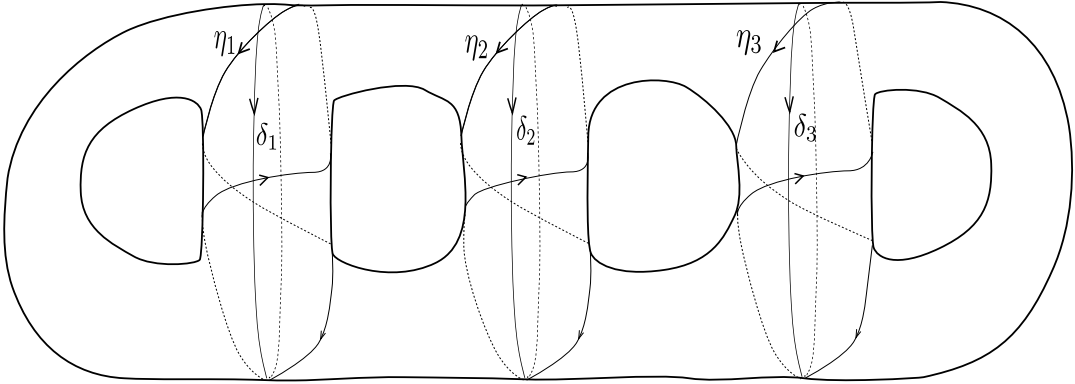}

\caption{The curves $\delta_i$, $\eta_i$ on a genus $4$ surface. }
   
\end{figure}

\noindent We now have everything we need to prove : 

\begin{theorem}
\label{principal}
The action of the mapping class group on $\chi$ is ergodic in genus $g \geq 2$. 

\end{theorem}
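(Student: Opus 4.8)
The plan is to combine the two ingredients already in place: ergodicity of the $\mathrm{Mod}(S)$-action on the base $\mathrm{H}^1(S,\mathbb{C}^*)$ (Section \ref{ergodicitebase}) and ergodicity of the Torelli action on almost every fibre $\mathrm{PH}^1_\alpha(\Gamma,\mathbb{C}) \simeq \mathbb{CP}^{2g-3}$. The only genuinely new work is the fibrewise statement for $g \geq 3$; once it is established, the passage from a positive-measure set $U'$ to a conull set $V$ via base ergodicity and the final Fubini computation are identical to the genus-$2$ case, so I will carry the argument to the point of fibrewise ergodicity and then invoke Section 4 for the conclusion.

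First I would record the precise shape of the $G$-action. By the preceding Proposition, in the basis $[\mu_1],[\nu_1],\dots,[\mu_{g-1}],[\nu_{g-1}]$ the commuting product $G = G_1\cdots G_{g-1}$ acts block-diagonally on $\mathrm{H}^1_\alpha(\Gamma,\mathbb{C}) \simeq \bigoplus_i V_i$ with $V_i = \mathbb{C}^2$: using $\mu_i(\delta_i) = \nu_i(\eta_i) = 0$ together with the disjointness relations $\mu_j(\tilde\delta_i) = \nu_j(\tilde\delta_i) = 0$ for $j \neq i$, the generator $T_{\delta_i}^{-1}$ acts on the block $V_i$ by $\begin{pmatrix} 1 & \nu_i(\delta_i) \\ 0 & 1\end{pmatrix}$ and trivially on every other block, and likewise $T_{\eta_i}^{-1}$ acts by $\begin{pmatrix} 1 & 0 \\ \mu_i(\eta_i) & 1\end{pmatrix}$ on $V_i$ alone. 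Since the $G_i$ commute and have complementary supports, $\overline{G} = \overline{G_1}\times\cdots\times\overline{G_{g-1}}$ inside the block-diagonal $\prod_i \mathrm{SL}(V_i)$.

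Next I would identify each factor. For $\alpha$ of transcendence degree $2g$ the entries $\nu_i(\delta_i) = (1-\alpha(a_i)^{-1})(1-\alpha(b_i)^{-1})$ and $\mu_i(\eta_i) = (1-\alpha(a_i))(1-\alpha(b_i))$ are nonzero (Proposition \ref{calculaction}), so each $G_i$ is non-elementary; imposing in addition the open condition $|\nu_i(\delta_i)\mu_i(\eta_i)| < 1$, Jorgensen's lemma forces $G_i$ to be non-discrete. The classification of non-discrete non-elementary subgroups of $\mathrm{SL}(2,\mathbb{C})$ then leaves only $\mathrm{SL}(2,\mathbb{C})$, $\mathrm{SU}(2)$, a form of $\mathrm{SL}(2,\mathbb{R})$, or a finite extension; the compact possibilities are ruled out because $G_i$ contains the nontrivial unipotents above, whose powers are unbounded, and the real possibilities are ruled out exactly as in Section 4 since $\bigl(\nu_i(\delta_i)\mu_i(\eta_i)\bigr)^2 \notin \mathbb{R}$ for generic $\alpha$. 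Hence there is a set $U'$ of positive measure on which $\overline{G} = \mathrm{SL}(2,\mathbb{C})^{g-1}$ acting block-diagonally.

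Finally I would deduce ergodicity on the fibre. The bad locus $Z = \bigcup_i \{v_i = 0\}$ is $\overline{G}$-invariant and Lebesgue-null, and on its conull open complement $\overline{G}$ acts transitively: given $[v_1:\cdots:v_{g-1}]$ and $[w_1:\cdots:w_{g-1}]$ with all $v_i,w_i \neq 0$, transitivity of $\mathrm{SL}(2,\mathbb{C})$ on $\mathbb{C}^2\setminus\{0\}$ produces $g_i$ with $g_i v_i = w_i$ simultaneously. Transitivity on a conull open set suffices for ergodicity by the density argument proving Lemma \ref{ergodique}: a density point of an invariant set of positive measure is carried by $\overline{G}$ onto a dense set of density points, the action being by diffeomorphisms and hence measure-class preserving, so the set is conull. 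Thus $G$, and a fortiori $\mathcal{I}(S) \supseteq G$, acts ergodically on $\mathrm{PH}^1_\alpha(\Gamma,\mathbb{C})$ for every $\alpha \in U'$; spreading this over $V = \mathrm{Mod}(S)\cdot U'$ through base ergodicity as in Proposition \ref{ergodicitetorelli} and combining with the Fubini computation of Section 4 gives the theorem. The main obstacle is the previous paragraph: one must guarantee that each block closure is the full $\mathrm{SL}(2,\mathbb{C})$ and not a compact or real form, for were some block $\mathrm{SU}(2)$ the diagonal action would preserve the ratios of the norms $|v_i|$ and be non-ergodic, and then notice that the resulting product is only transitive on a conull set, so that Lemma \ref{ergodique} must be used in its mildly strengthened form.
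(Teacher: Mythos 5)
Your proof is correct and follows essentially the same route as the paper: the block-diagonal action of $G = G_1 \times \cdots \times G_{g-1}$ on $\mathrm{H}^1_{\alpha}(\Gamma,\mathbb{C})$, Jorgensen's lemma together with the classification of non-discrete non-elementary subgroups of $\mathrm{SL}(2,\mathbb{C})$ to identify each block closure, transitivity of the product on a conull subset of the fibre, and finally base ergodicity plus the Fubini computation of the genus-$2$ section. You are in fact more explicit than the paper at the two points it leaves terse --- excluding the compact and real forms for $\overline{G_i}$ (the nontrivial unipotent generators already rule out $\mathrm{SU}(2)$) and explaining why blockwise transitivity on conull sets yields ergodicity of the product, where a bare appeal to ``Fubini'' for a product of ergodic actions would not by itself suffice.
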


\begin{proof}

Let $G$ be the group generated by the $T_{\delta_i}$, $T_{\eta_i}$. $G = G_1 \times \cdots \times G_{g-1} $ since the $G_i$ commute. The $G_i$ action on the vector subspace spanned by $[\mu_i]$ and $[\nu_i]$ is the action of the group spanned by the matrices :

  $$  \left( \begin{array}{cc}
     1 & (1 - \alpha(a_i))(1 - \alpha(b_i))  \\
      0 & 1   \\
   \end{array} \right) , \left(          \begin{array}{cc}
      1 &  0 \\
      (1 - \alpha(a_i)^{-1})(1 - \alpha(b_i)^{-1})   & 1 \\
   \end{array} \right) $$

\noindent Applying Jorgensen's lemma, there exists an open set $U$ of $\mathrm{H}_{\alpha}^1(\Gamma,\mathbb{C})$ for which for all $i$, the action of $G_i$ on the vector space spanned $[\mu_i]$ and $[\nu_i]$ est ergodic (since the action of its closure is transitive). This implies (according to Fubini's theorem) that the action of $G$ on $\mathrm{H}_{\alpha}^1(\Gamma,\mathbb{C})$ is ergodic, hence the action of the Torelli group is ergodic on $\mathrm{PH}_{\alpha}^1(\Gamma,\mathbb{C})$ for $\alpha \in U$. Proposition \ref{ergodicitetorelli} implies it is ergodic on  $\mathrm{PH}_{\alpha}^1(\Gamma,\mathbb{C})$ for $\alpha$ in a dense subset of full measure. Applying Fubini theorem and using the fact that the action of $\mathrm{Mod}(S)$ is ergodic on $\mathrm{H}^1(S,\mathbb{C}^*)$, one finds that the action of $\mathrm{Mod}(S)$ on $\chi$ is ergodic.

\end{proof}

\begin{corollary}
There is no measure in the class of Lebesgue measure on $\chi(\Gamma, \mathrm{Aff}(\mathbb{C}) )$ invariant by the action of the mapping class group. In particular, there is no invariant symplectic form. 

\end{corollary}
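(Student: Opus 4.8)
The plan is to reduce the statement to the non-existence of an invariant measure in the Lebesgue class, and then to produce a contradiction from the dynamics in a single generic fibre. First I would dispose of the symplectic form as a special case: if $\omega$ were a $\mathrm{Mod}(S)$-invariant symplectic form on $\chi$, its top exterior power $\omega \wedge \cdots \wedge \omega$ would be a $\mathrm{Mod}(S)$-invariant volume form, hence a smooth positive, in particular Lebesgue-class, invariant measure. So it suffices to show that no measure $m$ mutually absolutely continuous with Lebesgue is $\mathrm{Mod}(S)$-invariant. It is worth noting at the outset that the obstruction cannot come from the base alone: since $\mathrm{Sp}(2g,\mathbb{Z})$ acts by volume-preserving linear maps, it does preserve (the infinite) Lebesgue measure on $\mathrm{H}^1(S,\mathbb{C}^*)$. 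The mechanism must therefore be sought in the compact fibres.

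Suppose for contradiction that such an $m \sim \mathrm{Leb}$ exists. Since $L : \chi \to \mathrm{H}^1(S,\mathbb{C}^*)$ is locally a product, I would disintegrate $m$ along $L$ and observe that, because $\mathrm{Leb}_\chi$ is locally $\mathrm{Leb}_{\mathrm{base}} \otimes \mathrm{Leb}_{\mathrm{fibre}}$, for almost every $\alpha$ the conditional measure $m_\alpha$ on the fibre $\mathrm{PH}^1_\alpha(\Gamma,\mathbb{C}) \simeq \mathbb{CP}^{2g-3}$ is a genuine probability measure equivalent to Lebesgue on the (compact) fibre. Because the Torelli group $\mathcal{I}(S)$ acts trivially on the base and preserves each fibre, acting there by the projective representation $\tau_\alpha$, the $\mathrm{Mod}(S)$-invariance of $m$ forces, by essential uniqueness of the disintegration, that $m_\alpha$ is $\tau_\alpha(\mathcal{I}(S))$-invariant for almost every $\alpha$.

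The crux is then to rule out such an $m_\alpha$ on a full-measure set of $\alpha$. The unitary, real and almost-real loci all have measure zero, so for almost every $\alpha$ the representation is strictly affine and generic, and the computations of Section \ref{Torelli} show that the closure of the relevant subgroup of $\tau_\alpha(\mathcal{I}(S))$ is of $\mathrm{PSL}(2,\mathbb{C})$ type (the hypothesis that the trace quantity is non-real excludes $\mathrm{PSL}(2,\mathbb{R})$ and its $\mathbb{Z}/2\mathbb{Z}$-extension, and strict affineness excludes the compact case $\mathrm{PSU}(2)$). Since $\mathrm{PSL}(2,\mathbb{C})$ acts on $\mathbb{CP}^1$ transitively with a non-unimodular Borel stabilizer, it preserves no finite measure in the Lebesgue class; in genus $2$ this already contradicts the existence of $m_\alpha$. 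In higher genus I would run the same argument through a single factor $G_i = \langle T_{\delta_i}, T_{\eta_i}\rangle$, whose closure contains a copy of $\mathrm{SL}(2,\mathbb{C})$ acting with a genuinely loxodromic one-parameter subgroup whose contraction–expansion dynamics on $\mathbb{CP}^{2g-3}$ are incompatible with any invariant Lebesgue-class probability measure. This contradiction shows that no $\mathrm{Mod}(S)$-invariant measure in the Lebesgue class exists, and the assertion about symplectic forms follows.

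The step I expect to be the main obstacle is the last one: establishing, uniformly over a full-measure set of $\alpha$, that the closure of $\tau_\alpha(\mathcal{I}(S))$ (or of one factor $G_i$) preserves no Lebesgue-class probability measure on the fibre, together with the measure-theoretic bookkeeping guaranteeing that the conditional measures $m_\alpha$ are genuinely Lebesgue-class and genuinely $\tau_\alpha(\mathcal{I}(S))$-invariant for almost every $\alpha$. Both the passage from $G_i$-invariance to $\overline{G_i}$-invariance (via weak-$\ast$ continuity) and the classification of possible closures from Section \ref{Torelli} are the tools I would lean on here.
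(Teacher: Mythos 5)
Your argument is correct and is essentially the paper's: the paper's one-line proof rests on exactly the same mechanism, namely that in almost every fibre the Torelli group acts through projective transformations with attracting fixed points, so that no finite Lebesgue-class conditional measure can be invariant; your disintegration along $L$ and the reduction of the symplectic case to its Liouville volume simply make explicit what the paper leaves implicit. One small repair: the compact closure $\mathrm{PSU}(2)$ is ruled out not by strict affineness per se but because the generators act as non-trivial unipotent matrices whose powers are unbounded (and, for the relevant set of $\alpha$, the non-reality of the trace $2+(ab)^2$ computed in the paper already excludes every compact or real case), after which your loxodromic contraction--expansion argument goes through on $\mathbb{CP}^{2g-3}$ exactly as stated.
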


\begin{proof}

This follows directly from the fact that for almost all $\alpha \in \mathrm{H}^1(S, \mathbb{C}^*)$, the Torrelli group acts on $\mathrm{PH}_{\alpha}^1(\Gamma,\mathbb{C})$ through elements of $\mathrm{PGL}(\mathrm{H}_{\alpha}^1(\Gamma,\mathbb{C}))$ having attracting fixed points in $\mathrm{PH}_{\alpha}^1(\Gamma,\mathbb{C})$ .

\end{proof}

\section{Euclidean characters.}

Let us look at the action of the mapping class group on $\chi_{\mathbb{U}}$.

\noindent Let $\rho :\Gamma \longrightarrow \mathrm{Aff}(\mathbb{C})$ be a Euclidean representation (whose linear part ranges in the set of complex number of absolute value $1$) . One can naturally associate to $\rho$ a flat $\mathbb{C}$-bundle over $S$ the following way : let $\tilde{S}$ be a universal cover of $S$, $\Gamma$ acts on $\tilde{S} \times \mathbb{C}$  :

$$ \gamma \cdot (x,z) = (\gamma\cdot x, \rho(\gamma)(z) ) $$ 

\noindent The bundle associated to $\rho$ is the quotient $F_{\rho} = \tilde{S} \times\mathbb{C}/ \Gamma $. The foliation $\tilde{S} \times \mathbb{C}$ (whose leaves are the $\tilde{S} \times \{ \cdot \} $) factors through the quotient and defines a flat connection. Note that this construction can be made for any representation $\rho : \Gamma \longrightarrow \mathrm{Homeo}(\mathbb{C})$. 

\vspace{2mm}

\noindent Whenever $\rho$ is Euclidean, one can define a volume form $\mu_x$ , $x\in S$ on the fibers since the standard volume form on $E = \tilde{S} \times \mathbb{C}$ is preserved by the action of $\Gamma$, since $\rho$ is Euclidean. One can define for each $x \in S$  a volume form $\mu_x$ on the fiber over $x$, to get a $2$-form $\omega$ defined on the whole total space. Moreover the form $\omega$ is closed, since it is the form is $dz$ in the coordinates $(x,z)$.

\begin{proposition}

Let $s$ be a section of the bundle $F_{\rho}$.

$$ \mathrm{v}(\rho) = \int_S{s^*\omega} $$

\noindent does not depend on the choice of the section $s$.  It is the volume of the representation $\rho $ .

\end{proposition}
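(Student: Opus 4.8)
The plan is to exploit the two structural facts at our disposal: that $\omega$ is closed, and that the fibers of $F_\rho$ carry a natural affine structure. Together these force any two sections to be joined by a canonical homotopy through sections, after which independence of $\mathrm{v}(\rho)$ on $s$ becomes a one-line application of Stokes' theorem.

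First I would construct that homotopy. The key observation is that the structure group $\mathrm{Aff}(\mathbb{C})$ preserves affine combinations in the fibers: for an affine map $w = az + b$ one has $(1-t)w_0 + t w_1 = a\big((1-t)z_0 + t z_1\big) + b$, so the affine combination of two points in a fiber is intrinsically defined, independently of the trivialization. Consequently, given two sections $s_0$ and $s_1$ of $F_\rho$, the assignment $s_t = (1-t)s_0 + t s_1$ defines a section for every $t \in [0,1]$, and assembling them yields a smooth map $H : S \times [0,1] \longrightarrow F_\rho$, $H(x,t) = s_t(x)$, with $H(\cdot,0) = s_0$ and $H(\cdot,1) = s_1$.

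Next I would apply Stokes' theorem on the compact $3$-manifold-with-boundary $S \times [0,1]$. Since $\omega$ is closed, $d(H^*\omega) = H^*(d\omega) = 0$, and because $S$ is a closed surface the only boundary contributions come from $S \times \{1\}$ and $S \times \{0\}$ (there is no lateral boundary). With the outward-normal orientation this gives
$$ 0 = \int_{S \times [0,1]} d(H^*\omega) = \int_{S} s_1^*\omega - \int_{S} s_0^*\omega, $$
so $\int_S s^*\omega$ is the same for $s_0$ and $s_1$, which is exactly the claimed independence.

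I expect the only genuinely delicate point to be the homotopy step: one must check that the affine combination $s_t$ is globally well defined (this is precisely where the $\mathrm{Aff}(\mathbb{C})$-structure of the bundle is used, verifying that the gluing cocycle respects affine combinations) and that $H$ is smooth; the rest is routine once the orientations on the two boundary copies of $S$ are tracked carefully. A variant that sidesteps homotopies would be to note that $s_1 - s_0$ is a section of the associated flat vector bundle and to exhibit $s_1^*\omega - s_0^*\omega$ as an exact $2$-form on $S$ via the Cartan homotopy formula; but the Stokes argument on $S\times[0,1]$ seems the most transparent.
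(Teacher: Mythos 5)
Your argument is essentially the paper's own proof: the paper also notes that the fibers are convex (so any two sections are joined by the affine-combination homotopy, well defined precisely because the structure group is $\mathrm{Aff}(\mathbb{C})$) and then invokes Stokes' theorem on the image of the homotopy in $[0,1]\times F_\rho$. Your write-up just makes the homotopy and the boundary bookkeeping explicit; there is no substantive difference.
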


\begin{proof}

$E$ being convex, two sections $s_1$ and $s_2$ are homotopic through $s_t$. Notice that $\int_S{s^*\omega}$ is the volume of the graph of $\rho$. The proposition is a corollary of Stokes theorem applied to the image of the homotopy $s_t$ in $[0,1] \times F_{\rho}$.

\end{proof}

\noindent The volume defines a function $\mathrm{v} : \mathrm{Hom}(\Gamma, \mathrm{Iso}_+(\mathbb{C}) ) \longrightarrow \mathbb{R} $. Let us study the restriction of this function to  $Z^1_{\alpha}(\Gamma, \mathbb{C}) $ for a given $\alpha \neq \mathrm{1}$. The volume of a cocycle $\lambda \in Z^1_{\alpha}(\Gamma, \mathbb{C}) $ is the volume of the associated representation.

\vspace{2mm}

\noindent This form can also be defined in a entirely homological way. If $\alpha$ and $\beta$ are two elements of $\mathrm{H}^1(S,\mathbb{U})$, one can define an algebraic product : 

$$ \wedge : \mathrm{H}^1_{\alpha}(\Gamma, \mathbb{C}) \times \mathrm{H}^1_{\beta}(\Gamma, \mathbb{C}) \longrightarrow \mathrm{H}^2_{\alpha \beta}(\Gamma, \mathbb{C}) $$ 

\noindent where $\mathrm{H}^2_{\alpha \beta}(\Gamma, \mathbb{C})$ is the second group of the cohomology of $\Gamma$ twisted by $\alpha \beta$. $\mathrm{H}^2_{\alpha }(\Gamma, \mathbb{C})  = 0$ as soon as $\alpha \neq 1$.  The bilinear form $$ \begin{array}{ccccc}
\wedge_{\alpha} &  :  & \mathrm{H}^1_{\alpha}(\Gamma, \mathbb{C}) \times  \mathrm{H}^1_{\alpha}(\Gamma, \mathbb{C})   & \longrightarrow & \mathbb{C} \\
 & & (\lambda, \mu ) & \longmapsto & \lambda \wedge \overline{\mu}

\end{array}  $$

\noindent identifying canonically  $\mathrm{H}^2(\Gamma,\mathbb{C})$ and $\mathbb{C}$.  See \cite{DM} for more details (where everything is done is the case of holed spheres, nevertheless it still holds in our setting). 

\begin{proposition}

Take $\alpha \in \mathrm{H}^1(S,\mathbb{C}^*) $

\begin{enumerate}

\item For $\lambda \in Z^1_{\alpha}(\Gamma, \mathbb{C}) $, $\mathrm{v}(\lambda)$ only depends on the class of $\lambda$ in $H^1_{\alpha}(\Gamma, \mathbb{C}) $.

\item The induced function $\mathrm{v} :  H^1_{\alpha}(\Gamma, \mathbb{C})  \longrightarrow \mathbb{R}$ is a non-degenerate Hermitian form.

\item For all $\alpha$ the signature of the form is $(g-1, g-1)$. 
\end{enumerate}

\end{proposition}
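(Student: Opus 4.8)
The plan is to realize the volume through an equivariant developing map and then read all three assertions off the dictionary between twisted de Rham cohomology and group cohomology, together with Hodge theory on the flat unitary line bundle $L_\alpha$ attached to $\alpha$ (here $\alpha$ is unitary, so $\bar\alpha = \alpha^{-1}$ and $\lambda \wedge \bar\mu$ indeed lands in the untwisted $\mathrm{H}^2(\Gamma,\mathbb{C})\simeq\mathbb{C}$). First I would fix a $\rho$-equivariant map $f : \tilde S \to \mathbb{C}$ representing a section $s$ of $F_\rho$, so that $f(\gamma\cdot x) = \alpha(\gamma) f(x) + \lambda(\gamma)$. Pulling back the fiberwise area form $\omega = \tfrac{i}{2}\, dz\wedge d\bar z$ (which is $\Gamma$-invariant precisely because $\alpha$ is unitary) gives $s^*\omega = \tfrac{i}{2}\, df\wedge d\bar f$, hence $\mathrm{v}(\lambda) = \tfrac{i}{2}\int_S df\wedge d\bar f$. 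Differentiating the equivariance relation shows $\gamma^* df = \alpha(\gamma)\, df$, so $df$ descends to a closed $1$-form with values in $L_\alpha$ whose class under the canonical isomorphism $\mathrm{H}^1_{dR}(S, L_\alpha)\simeq \mathrm{H}^1_\alpha(\Gamma,\mathbb{C})$ is exactly $[\lambda]$ (the primitive $f$ produces the cocycle $\gamma\mapsto f(\gamma x_0)-\alpha(\gamma)f(x_0)=\lambda(\gamma)$). The first assertion is then immediate: replacing $\lambda$ by $\lambda + (1-\alpha)c$ replaces $f$ by $f + c$, which leaves $df$ unchanged, so $\mathrm{v}$ factors through $\mathrm{H}^1_\alpha(\Gamma,\mathbb{C})$ and equals $\tfrac{i}{2}\,[\lambda]\wedge\overline{[\lambda]}$.

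For the second assertion, I would polarize to obtain the sesquilinear form $H([\lambda],[\mu]) = \tfrac{i}{2}\,[\lambda]\wedge\overline{[\mu]}$, written as $\tfrac{i}{2}\int_S df\wedge d\bar g$. Conjugating the integrand shows $\overline{H([\lambda],[\mu])}=H([\mu],[\lambda])$, so $H$ is Hermitian and $\mathrm{v}=H(\cdot,\cdot)$ is real-valued. Non-degeneracy I would deduce from twisted Poincaré duality: since $\alpha\neq 1$ one has $\mathrm{H}^2_\alpha(\Gamma,\mathbb{C})=0=\mathrm{H}^0_\alpha(\Gamma,\mathbb{C})$, so the cup product $\mathrm{H}^1(S,L_\alpha)\times \mathrm{H}^1(S,L_{\alpha^{-1}})\to \mathrm{H}^2(S,\mathbb{C})\simeq\mathbb{C}$ is a perfect pairing; composing with the conjugation isomorphism $\mathrm{H}^1(S,L_{\alpha^{-1}})\simeq\overline{\mathrm{H}^1(S,L_\alpha)}$ is exactly the statement that $H$ is non-degenerate.

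The third assertion is the real work. I would equip $L_\alpha$ with its flat unitary metric and invoke the Hodge decomposition for the unitary local system, $\mathrm{H}^1(S,L_\alpha)=H^{1,0}\oplus H^{0,1}$ with $H^{1,0}=\mathrm{H}^0(S,K_S\otimes L_\alpha)$ and $H^{0,1}=\mathrm{H}^1(S,\mathcal{O}\otimes L_\alpha)$, each summand represented by harmonic forms whose $(1,0)$ and $(0,1)$ parts are separately harmonic (this is where flatness of $L_\alpha$ enters). Since $L_\alpha$ has degree $0$ and is non-trivial, Riemann--Roch together with Serre duality gives $\dim H^{1,0}=\dim H^{0,1}=g-1$, consistent with the total dimension $2g-2$ found in Proposition 3. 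A pointwise computation then shows the two factors are $H$-orthogonal and that $H$ is positive definite on $H^{1,0}$ and negative definite on $H^{0,1}$: for a holomorphic $\phi=h\,dz$ one has $\tfrac{i}{2}\phi\wedge\bar\phi=|h|^2\,dx\wedge dy\geq 0$, the sign reverses for anti-holomorphic forms, and $\phi\wedge\bar\psi=0$ when $\phi$ is of type $(1,0)$ and $\psi$ of type $(0,1)$. Hence the signature is $(g-1,g-1)$ for every non-trivial unitary $\alpha$.

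The main obstacle is this third step: one must genuinely set up harmonic/Hodge theory for the unitary flat bundle $L_\alpha$ — the splitting of harmonic $1$-forms by type and the Riemann--Roch--Serre count $g-1$ on each side — and then pin down the signs of $H$ on the two summands, everything else reducing to the de Rham--group cohomology dictionary and Poincaré duality. A useful cross-check is that, $H$ being non-degenerate and varying continuously, its signature is locally constant in $\alpha$ over the connected space $\mathrm{H}^1(S,\mathbb{U})\smallsetminus\{1\}$, so a single Hodge-theoretic computation already forces the value $(g-1,g-1)$ everywhere.
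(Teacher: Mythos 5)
Your argument is correct, and for the decisive third point it takes a genuinely different route from the paper. For points 1 and 2 the content is essentially the same: you realize the volume as $\tfrac{i}{2}\int_S df\wedge d\bar f$ for a $\rho$-equivariant developing map $f$, note that coboundaries $(1-\alpha)c$ only translate $f$ by a constant, and get non-degeneracy from Poincar\'e duality for the twisted coefficients; the paper phrases point 1 via conjugation of the flat bundle by a translation, but this is the same observation, and your explicit polarization $H([\lambda],[\mu])=\tfrac{i}{2}\int_S df\wedge d\bar g$ actually makes the Hermitian property more transparent than the paper's homogeneity identity $\Psi_*\omega_\rho=|a|^2\omega_{f\rho f^{-1}}$. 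For the signature, the paper argues softly: for $\alpha$ real (hence valued in $\{\pm1\}$) complex conjugation is an anti-linear involution of $\mathrm{H}^1_\alpha(\Gamma,\mathbb{C})$ with $\mathrm{v}(\overline{\lambda})=-\mathrm{v}(\lambda)$, which forces the signature of a non-degenerate form to be balanced, and then the value $(g-1,g-1)$ propagates to all non-trivial unitary $\alpha$ by continuity of the signature over the connected set $\mathrm{H}^1(S,\mathbb{U})\setminus\{1\}$. You instead compute the signature directly at every $\alpha$ by Hodge theory on the flat unitary line bundle $L_\alpha$: the type decomposition of harmonic forms, the Riemann--Roch/Serre count $h^0(K\otimes L_\alpha)=g-1$ (using that $L_\alpha$ is holomorphically non-trivial), and the pointwise sign of $\tfrac{i}{2}\phi\wedge\bar\phi$ on each summand. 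Your approach is heavier machinery but yields more: it exhibits explicit maximal positive and negative subspaces (the Deligne--Mostow/Veech picture the paper alludes to), whereas the paper's involution-plus-connectivity argument is more elementary and avoids setting up Hodge theory for local systems, at the cost of needing the continuity of the signature, which the paper itself only sketches. Your closing remark that continuity plus a single computation suffices shows you could also run the paper's shortcut; both proofs are sound. One caveat worth making explicit in your write-up: the statement as printed allows $\alpha\in\mathrm{H}^1(S,\mathbb{C}^*)$, but the form $\lambda\wedge\overline{\mu}$ only lands in untwisted $\mathrm{H}^2$ when $\overline{\alpha}=\alpha^{-1}$, so the restriction to unitary $\alpha$ that you impose at the outset is indeed the intended reading.
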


\begin{proof}

\begin{enumerate}
\item 
Remark that if $f := az + b \in \mathrm{Aff}(\mathbb{C}) $ , the map

$$ \begin{array}{ccccc}

\Psi & :  & \tilde{S} \times E & \longrightarrow &\tilde{S} \times E \\

 & & (x,z) & \longmapsto & (x, f(z) ) 
\end{array} $$

\noindent  induces an affine isomorphism between the bundles $F_{\rho} $ and $F_{f \rho f^{-1}} $for any representation $\rho$.  From the definition of the forms $\omega$ one gets $$\Psi_* \omega_{\rho} = |a|^2\omega_{f \rho f^{-1}}  $$ Any two representations define the same element in $ H^1_{\alpha}(\Gamma, \mathbb{C})$ if and only if they are conjugated by a translation. In this case, they have the same volume. The formula above ensures that $\mathrm{v}$ is a Hermitian form.

\item The fact that the form is non degenerate is just Poincaré duality in twisted cohomology.

\item Assume $\alpha$ is real. Then conjugation is an order $2$ endomorphism of $\mathrm{H}^1_{\alpha}(\Gamma, \mathbb{C})$ such that $\mathrm{v}(\overline{\lambda}) = - \mathrm{v}(\lambda)$ for every $\lambda \in  \mathrm{H}^1_{\alpha}(\Gamma, \mathbb{C})$. Since $\mathrm{v}$ is non-degenerate, its signature is $(g-1,g-1)$. An argument of connectivity extends the property to arbitrary $\alpha$. To make this work one needs to see that the signature of the form is continuous in $\alpha$. Notice that this form can be seen as the volume form of Euclidean surfaces with branched points. On the open set of those $\alpha$ who can be realized as the linear holonomy of a flat structure with branched points, the signature is continuous since the volume form is continuous. But this set can easily be shown to be all $\mathrm{H}^1(S, \mathbb{U}) \setminus \{ 1 \}$.

\end{enumerate}

\end{proof}

Let $\chi_{\mathbb{U}}^+$(reps. $\chi_{\mathbb{U}}^-$ and $\chi_{\mathbb{U}}^0$) be the subset of $\chi_{\mathbb{U}}$ defined as the set of representations whose volume is positive (reps. negative and null). $\chi_{\mathbb{U}}^+$ and $\chi_{\mathbb{U}}^-$ are invariant subsets of $\chi_{\mathbb{U}}$ under the action of the mapping class group, both of positive measure for the Lebesgue measure on $\chi_{\mathbb{U}}$.

\begin{proposition}

\begin{enumerate}

\item The action of the mapping class group  preserves $\chi_{\mathbb{U}}^+$, $\chi_{\mathbb{U}}^-$ and $\chi_{\mathbb{U}}^0$. 

\item For all $\alpha \in \mathrm{H}^1(S,\mathbb{U})$ different from $\{1\}$, the Torelli group acts on  $\mathrm{PH}^1_{\alpha}(\Gamma, \mathbb{C})$ by transformations belonging to $\mathrm{PU}(\wedge_{\alpha})$.

\end{enumerate}

\end{proposition}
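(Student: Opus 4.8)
The plan is to notice that both assertions are consequences of a single fact: the \emph{volume} $\mathrm{v}$, equivalently the diagonal $\lambda \mapsto \lambda \wedge \overline{\lambda} = \wedge_{\alpha}(\lambda,\lambda)$ of the Hermitian pairing established in the previous proposition, is invariant under the whole mapping class group. Granting this, the first point is immediate: an element of $\mathrm{Mod}(S)$ sends the fiber over $\alpha \in \mathrm{H}^1(S,\mathbb{U})$ to the fiber over $\Psi(f)\alpha$ (again unitary), and since it preserves the value of the volume of each character, it preserves the three level sets $\chi_{\mathbb{U}}^{+}$, $\chi_{\mathbb{U}}^{-}$ and $\chi_{\mathbb{U}}^{0}$.

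To establish the invariance I would argue cohomologically rather than through the flat bundle, so as to sidestep the bookkeeping of lifts and orientations. Let $f \in \mathrm{Mod}(S)$ be represented by an orientation-preserving diffeomorphism fixing $p$, acting on twisted cohomology by $f^{*}$, that is by precomposition with the induced automorphism of $\Gamma$. Three naturality properties are needed. First, $f^{*}$ acts only on the group variable and not on the coefficients, so it commutes with complex conjugation, $\overline{f^{*}\lambda} = f^{*}\overline{\lambda}$. Second, $f^{*}$ is natural for the cup product $\wedge$, sending $\mathrm{H}^1_{\alpha} \times \mathrm{H}^1_{\alpha}$ to $\mathrm{H}^1_{f^{*}\alpha} \times \mathrm{H}^1_{f^{*}\alpha}$ compatibly (the target lands in the untwisted $\mathrm{H}^2(\Gamma,\mathbb{C})$ because $\alpha\cdot\overline{\alpha}=1$ when $\alpha$ is unitary). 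Third, by Dehn--Nielsen--Baer $\mathrm{Mod}(S) = \mathrm{Out}^{+}(\Gamma)$ fixes the fundamental class in $\mathrm{H}^2(\Gamma,\mathbb{Z})$, so $f^{*}$ is the identity on $\mathrm{H}^2(\Gamma,\mathbb{C}) \simeq \mathbb{C}$ under the canonical identification. Combining these gives $\mathrm{v}(f^{*}\lambda) = (f^{*}\lambda)\wedge \overline{(f^{*}\lambda)} = f^{*}(\lambda \wedge \overline{\lambda}) = \lambda \wedge \overline{\lambda} = \mathrm{v}(\lambda)$, the desired invariance.

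For the second point I would use that the Torelli group $\mathcal{I}(S)$ acts trivially on $\mathrm{H}^1(S,\mathbb{C}^{*})$, hence fixes every $\alpha$, and (as shown earlier) acts $\mathbb{C}$-linearly on $\mathrm{H}^1_{\alpha}(\Gamma,\mathbb{C})$. The invariance just proved says each $g \in \mathcal{I}(S)$ preserves the quadratic form $\lambda \mapsto \wedge_{\alpha}(\lambda,\lambda)$. Since $g$ is $\mathbb{C}$-linear, the polarization identity
\[
\begin{aligned}
4\,\wedge_{\alpha}(\lambda,\mu) = {}&\, \wedge_{\alpha}(\lambda+\mu,\lambda+\mu) - \wedge_{\alpha}(\lambda-\mu,\lambda-\mu)\\
&\,+ i\,\wedge_{\alpha}(\lambda+i\mu,\lambda+i\mu) - i\,\wedge_{\alpha}(\lambda-i\mu,\lambda-i\mu)
\end{aligned}
\]
recovers the whole Hermitian form from its diagonal, so $g$ preserves $\wedge_{\alpha}$ itself; that is, $g \in \mathrm{U}(\wedge_{\alpha})$, and its projectivization lies in $\mathrm{PU}(\wedge_{\alpha})$.

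The main obstacle I anticipate is the cohomological invariance step: one must check carefully that $f^{*}$ genuinely commutes with conjugation on coefficients and is natural for the twisted cup product landing in the untwisted $\mathrm{H}^2$, and that the identification $\mathrm{H}^2(\Gamma,\mathbb{C}) \simeq \mathbb{C}$ used throughout is the one fixed by $\mathrm{Out}^{+}(\Gamma)$. The only other delicate point is that the Torelli action be $\mathbb{C}$-linear and not merely $\mathbb{R}$-linear — this is what lets me pass $i$ through $g$ in the polarization argument, and it is exactly what the earlier proposition on the Torelli action on the fibers provides.
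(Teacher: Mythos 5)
Your proposal is correct, but it proves the key invariance by a different mechanism than the paper. The paper's (very terse) argument is geometric: a diffeomorphism $f$ fixing the base point lifts to $\tilde{S}$ and acts on $\tilde{S}\times E$, inducing an isomorphism of flat bundles $F_{\rho\circ f_*}\simeq F_{\rho}$ that matches up the fiberwise volume forms $\omega$; since $f$ is orientation-preserving, integrating a pulled-back section over $S$ gives the same number, so $\mathrm{v}(\rho\circ f_*)=\mathrm{v}(\rho)$. You instead work entirely on the cohomological side, deducing the invariance of $\lambda\wedge\overline{\lambda}$ from three naturality facts: $f^*$ commutes with conjugation of coefficients, it is natural for the twisted cup product (which lands in untwisted $\mathrm{H}^2$ because $\alpha\overline{\alpha}=1$), and elements of $\mathrm{Out}^+(\Gamma)$ fix the fundamental class. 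Both routes are sound; yours trades the paper's reliance on the geometric definition of $\mathrm{v}$ for reliance on the identification $\mathrm{v}(\lambda)=\lambda\wedge\overline{\lambda}$, which the paper asserts (citing \cite{DM}) in the preceding proposition, so nothing is lost. A genuine merit of your write-up is that you make explicit the polarization step needed to pass from ``the Torelli group preserves the diagonal $\mathrm{v}$'' to ``it preserves the full Hermitian form $\wedge_{\alpha}$,'' together with the observation that $\mathbb{C}$-linearity of the Torelli action is what makes polarization applicable; the paper leaves this entirely implicit. Your polarization identity is stated with the correct convention (linear in the first slot, antilinear in the second, matching $\wedge_{\alpha}(\lambda,\mu)=\lambda\wedge\overline{\mu}$), so the argument goes through.
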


\begin{proof}

Just let a lift of a diffeomorphism to $\tilde{S}$ fixing a base point act on $\tilde{S} \times E$ to see that two representations differing from $f^*$ define the same volume form.

\end{proof}

\paragraph*{The representation of the Torelli group in the case of punctured spheres.}

We have defined a family of representation indexed by $\mathrm{H}^1(S,\mathbb{U})$ of the Torelli group in $\mathrm{PU}(\wedge_{\alpha}) \simeq \mathrm{PU}(g-1,g-1)$. Very little is known about this representation except for the fact that for almost all parameters, its image is not discrete. This family was originally discovered by Chueshev in the early 90's, see \cite{Chu}.
\noindent Now assume that $S$ has a finite number of punctures. One can still build a Hermitian form on $\mathrm{H}^1_{\alpha}(\Gamma, \mathbb{C})$  : Veech shows in \cite{Veech} that the signature of the $\wedge_{\alpha}$ depends on $\alpha$. Moreover, one can pick $\alpha$ in order that $\wedge_{\alpha}$ has signature $(1,n)$. The Torelli group still defines a representation in $\mathrm{PU}(1,n)$. 

\noindent It is an important question in complex hyperbolic geometry to build lattices in the isometry group of complex hyperbolic space. It is natural here to ask if these representations might lead to new constructions of lattices in $\mathrm{PU}(1,n)$.

\section{Link with branched affine structures and open problems.}

The original framework of this work was the study of affine branched  structures, especially their holonomy representations. A complex projective structure on a surface $S$ is an atlas of charts in $\mathbb{CP}^1$ where the transition maps are the restriction of elements in $\mathrm{PSL}(2,\mathbb{C}) = \mathrm{Aut}(\mathbb{CP}^1)$. One can also think of a projective structure as a $(\mathbb{CP}^1, \mathrm{PSL}(2, \mathbb{C}))$-structure in the sense of $(X,G)$-structures defined by Thurston. If $S$ is a surface endowed with a projective structure, one can pull this structure back to its universal cover $\tilde{S}$, in such a way this structure factors through the quotient $S = \tilde{S} / \Gamma$ (meaning that $\Gamma$ acts on $\tilde{S}$ by automorphisms of the projective structure). Since $\tilde{S}$ is simply connected, any projective chart can be fully extended to $\tilde{S}$. This defines a local diffeomorphsim

$$ \mathrm{dev} : \tilde{S} \longrightarrow \mathbb{CP}^1 $$ 

\noindent which is unique up to postcomposition by an element of $\mathrm{PSL}(2, \mathbb{C})$. Since the structure factors trough, there exists a morphism $\mathrm{hol} : \Gamma \longrightarrow \mathrm{PSL}(2, \mathbb{C})$ called the \textit{holonomy} such that for every $\gamma \in \Gamma$ and $x \in \tilde{S}$ we have 

$$ \mathrm{dev}(\gamma \cdot x ) = \mathrm{hol}(\gamma)( \mathrm{dev}(x))$$

\noindent Given $(X,G)$, one might ask what are the group homomorphisms which can arise as the holonomy map of a $(X,G)$-structure.

\paragraph{Translations surfaces and periods of abelian differentials.}

A translation surface is an atlas of charts in $\mathbb{C}$ with transition maps being translations. Since such structures can only arise when $S$ is a torus, one has to allow singularities : a finite set of points can carry a conical structure with angle being a integer multiple of $2\pi$. See \cite{Zorich} for a survey on the subject. The holonomy map of such a structure is a morphism $ \omega : \Gamma \longrightarrow \mathbb{C}$ which factors through $ \omega : \mathrm{H}_1(S,\mathbb{Z}) \longrightarrow \mathbb{C}$ since $\mathbb{C}$ is abelian. In this case, the holonomy problem is totally solved since the 20's (see \cite{Haupt}) by the following theorem : 

\begin{theorem}[Haupt, 1920]

An element $\omega \in \mathrm{H}^1(S,\mathbb{C}) = \mathrm{Hom}(\mathrm{H}_1(S,\mathbb{Z}), \mathbb{C})$ is the holonomy map of a translation surface (or equivalently is the periods of an abelian differential over a Riemann surface) if and only if the two following conditions hold :

\begin{enumerate}

\item $ \mathcal{I}(\omega)\cdot \mathcal{R}(\omega) > 0$, where $\mathcal{I}(\omega)$ and $\mathcal{R}(\omega)$ are respectively the imaginary and real part of $\omega$.

\item If the image of $\omega$ in $\mathbb{C}$ is a lattice $\Lambda$, then 
$$ \mathcal{I}(\omega)\cdot \mathcal{R}(\omega) > \mathrm{vol}(\mathbb{C}/\Lambda) $$ 

\end{enumerate}

\end{theorem}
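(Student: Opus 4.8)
The plan is to prove the two implications separately, the necessity being elementary and the sufficiency being the substantial part; throughout I identify the pairing $\mathcal{I}(\omega)\cdot\mathcal{R}(\omega)$ with the cup product of $\mathcal{R}(\omega),\mathcal{I}(\omega)\in\mathrm{H}^1(S,\mathbb{R})$ evaluated on the fundamental class.

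\textbf{Necessity.} Suppose $\omega$ is the period of a holomorphic abelian differential $\phi$ on a Riemann surface $X$ homeomorphic to $S$. The flat area of the associated translation surface equals $\frac{i}{2}\int_X \phi\wedge\overline{\phi}=\mathcal{I}(\omega)\cdot\mathcal{R}(\omega)$, which is strictly positive, giving the first condition. If moreover the image $\omega(\mathrm{H}_1(S,\mathbb{Z}))$ is a lattice $\Lambda\subset\mathbb{C}$, the developing map descends to a holomorphic branched covering $\pi:X\longrightarrow\mathbb{C}/\Lambda$ of some degree $d$, and multiplicativity of area under covers gives $\mathcal{I}(\omega)\cdot\mathcal{R}(\omega)=d\cdot\mathrm{vol}(\mathbb{C}/\Lambda)$. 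Since $X$ has genus $g\geq 2>1$, Riemann--Hurwitz forbids $d=1$, so $d\geq 2$ and the strict inequality of the second condition follows.

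\textbf{Sufficiency via the period map.} I would organize the converse around the map $P:\mathcal{H}\longrightarrow\mathrm{H}^1(S,\mathbb{C})$, $(X,\phi)\mapsto[\phi]$, defined on the space $\mathcal{H}$ of pairs consisting of a complex structure on $S$ and a nonzero holomorphic differential. Period coordinates make $P$ a local biholomorphism, so its image $\Omega:=P(\mathcal{H})$ is open; by necessity $\Omega$ is contained in the region $\mathcal{C}$ cut out by the two conditions, and the aim is the reverse inclusion. Both $\Omega$ and $\mathcal{C}$ are invariant under the scaling $\omega\mapsto c\omega$, so it suffices to work on the area-one slice $\mathcal{C}_1$. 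The real quadratic form $Q(\omega)=\mathcal{I}(\omega)\cdot\mathcal{R}(\omega)$ has signature $(2g,2g)$, so $\{Q>0\}$ is connected; since the lattice locus has positive codimension and any of its points can be pushed off it while keeping $Q>0$, the set $\mathcal{C}$, and hence $\mathcal{C}_1$, is connected. It therefore suffices to show that $\Omega\cap\mathcal{C}_1$ is closed in $\mathcal{C}_1$, which combined with openness forces $\Omega=\mathcal{C}$.

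\textbf{The main obstacle.} The step I expect to be hardest is exactly this closedness. Given $\omega_n=P(X_n,\phi_n)\to\omega\in\mathcal{C}_1$, one must extract a limiting realization, and the danger is that the conformal structures $X_n$ degenerate in moduli space by pinching curves, so that $\phi_n$ concentrates and the flat limit drops genus. The area being pinned to $1$ controls the total mass but not the geometry of thin parts; the key is that such a complete collapse can occur only when the periods accumulate onto a lattice $\Lambda$, and the strict inequality $\mathcal{I}(\omega)\cdot\mathcal{R}(\omega)>\mathrm{vol}(\mathbb{C}/\Lambda)$ is precisely the reserve of area that prevents the limit from being a single copy of $\mathbb{C}/\Lambda$. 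Turning this into a proof requires a quantitative lower bound on the flat systole of $(X_n,\phi_n)$ in terms of the distance from $\omega$ to the lattice locus, together with a compactness statement for area-one differentials with bounded periods and systole bounded below. As an independent and more hands-on route to the same conclusion, one may instead build the surface directly: after a change of symplectic basis realized by a homeomorphism of $S$, normalize $\omega$ so that each pair $(\omega(a_i),\omega(b_i))$ spans a positively oriented nondegenerate parallelogram in $\mathbb{C}\cong\mathbb{R}^2$ (here condition 1 enables the normalization and condition 2 handles the lattice case, where naive handles would be forced to degenerate), and glue the resulting $g$ flat tori along parallel slits to obtain a genus-$g$ translation surface with holonomy exactly $\omega$. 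The delicate points of this construction, the normalization lemma and the lattice locus, are the same obstacle seen from the constructive side.
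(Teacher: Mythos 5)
The paper itself gives no proof of this statement: Haupt's theorem is quoted as a classical result, with references to Haupt's original article and to Kapovich's dynamical proof in \cite{HK}, so there is no in-paper argument to measure you against. Judged on its own terms, your necessity direction is complete and correct: the identification of $\mathcal{I}(\omega)\cdot\mathcal{R}(\omega)$ with the flat area $\tfrac{i}{2}\int_X\phi\wedge\overline{\phi}$ gives condition 1, and in the lattice case the descent of the developing map to a degree-$d$ branched cover $X\to\mathbb{C}/\Lambda$ with $d\geq 2$ (by Riemann--Hurwitz, since $g\geq 2$) gives condition 2. The architecture you propose for sufficiency (openness of the period map, connectedness of the region $\mathcal{C}$ cut out by the two conditions, and a closedness statement) is a legitimate strategy, and your connectedness argument via the signature $(2g,2g)$ of $Q$ and the high codimension of the lattice locus is sound.

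The genuine gap is that the closedness step, which is where the entire content of the theorem lives, is named but not proved. Your heuristic that the strict inequality in condition 2 is ``the reserve of area'' preventing collapse does not address the actual failure mode: a sequence $(X_n,\phi_n)$ can pinch a \emph{separating} curve, which has zero period and is therefore invisible to $\omega$, splitting off a handle whose area tends to $0$ while its periods $(\omega(a_g),\omega(b_g))$ stay bounded with $\mathrm{Im}(\overline{\omega(a_g)}\,\omega(b_g))\to 0$; the limiting class $\omega$ can then satisfy both Haupt conditions and still fail to be realized by the degenerate limit. Ruling this out requires either the quantitative systole/compactness input you gesture at (which is not supplied), or the normalization lemma in your constructive variant (change of symplectic basis putting every handle's period pair in ``nondegenerate positively oriented'' position), which is precisely the hard combinatorial--dynamical content of Haupt's theorem and is again only asserted. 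Note also that the route the paper points to in \cite{HK} avoids closedness altogether: there one uses that the image of the period map is open and $\mathrm{Mod}(S)$-invariant and then invokes the classification of $\mathrm{Sp}(2g,\mathbb{Z})$-orbit closures on $\mathbb{C}^{2g}$ (Ratner-type rigidity) to conclude that an invariant open set contained in $\mathcal{C}$ must be all of $\mathcal{C}$. As it stands your text is a correct road map with the theorem-bearing step missing, not a proof.
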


\noindent A proof of this theorem using mapping class group dynamics has been given in \cite{HK}.

\paragraph*{Holonomy of complex projective structures.}

The holonomy problem is also solved in the case of complex projective structures. Let us recall the theorem due to Gallo, Kapovich and Marden (see \cite{GKM}) :

\begin{theorem}

A group homomorphism $\rho : \Gamma \longrightarrow \mathrm{PSL}(2,\mathbb{C})$ is the holonomy of a complex projective structure if and only if the two following conditions hold : 

\begin{enumerate}

\item $\rho$ lifts to $\mathrm{SL}(2,\mathbb{C})$.

\item The image of $\rho$ is a non-elementary subgroup of $\mathrm{PSL}(2,\mathbb{C})$. 

\end{enumerate}

\end{theorem}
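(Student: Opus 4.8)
The statement is an equivalence, and the plan is to prove the two implications separately: the necessity of the two conditions is soft, whereas their sufficiency is the real content. For necessity, suppose $\rho = \mathrm{hol}$ arises from a projective structure with developing map $\mathrm{dev} : \tilde{S} \longrightarrow \mathbb{CP}^1$. The associated flat $\mathbb{CP}^1$-bundle $E_\rho = \tilde{S} \times_\rho \mathbb{CP}^1$ carries a canonical section $x \mapsto [x, \mathrm{dev}(x)]$, transverse to the flat connection since $\mathrm{dev}$ is a local diffeomorphism. The Euler number of $E_\rho$ read off from this section equals the Euler characteristic $2 - 2g$, which is even; its reduction modulo $2$ is exactly the obstruction in $\mathrm{H}^2(S, \mathbb{Z}/2\mathbb{Z})$ to lifting the structure group to $\mathrm{SL}(2,\mathbb{C})$, so the obstruction vanishes and condition (1) holds. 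For condition (2), an elementary image would preserve a point, a pair of points, or a round circle in $\mathbb{CP}^1$; in each case the induced geometry on $S$ degenerates to an affine or similarity structure, forcing the Euler characteristic to be non-negative, which contradicts $g \geq 2$.

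For sufficiency I would adopt a two-stage strategy: first realize $\rho$ as the holonomy of a branched projective structure, then remove the branch points. For the first stage, fix a pants decomposition of $S$ and exploit the non-elementary hypothesis to choose, on each piece, generators of the free fundamental group whose images sit in generic position; the algebraic input here is that a non-elementary subgroup of $\mathrm{PSL}(2,\mathbb{C})$ contains two loxodromic elements generating a free non-elementary subgroup (a ping-pong argument). The key lemma to prove is that a pair of pants with prescribed non-elementary boundary holonomy carries a branched projective structure. Gluing these building blocks across the decomposing curves, matching holonomies, yields a branched projective structure on $S$ whose holonomy is $\rho$.

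The second stage is the delicate one. The total number of branch points, counted with multiplicity and taken modulo $2$, coincides with the obstruction class in $\mathrm{H}^2(S,\mathbb{Z}/2\mathbb{Z})$ that measures the failure of the $\mathrm{SL}(2,\mathbb{C})$-lift; since condition (1) guarantees that $\rho$ lifts, one can arrange the branching to be even. I would then cancel branch points two at a time by local surgeries --- a ``bubbling'' move that inserts a complementary pair of order-one branch points along an embedded arc, together with its inverse ``debubbling'' --- performed so as to leave the holonomy unchanged, until no branch points remain. The resulting unbranched projective structure then has holonomy $\rho$.

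I expect the main obstacle to be precisely this branch-point cancellation: it rests on a careful combinatorial theory of surgeries on (branched) projective structures, and on pinning down the exact dictionary between branch-point parity and the lifting obstruction. The most fragile points are the boundary curves whose holonomy is parabolic or elliptic, where both the pants construction and the bubbling moves require separate treatment; controlling these uniformly is where the bulk of the technical work lies.
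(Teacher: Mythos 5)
First, a point of order: the paper does not prove this statement at all --- it is quoted as a known theorem of Gallo, Kapovich and Marden (the reference \cite{GKM}), and is used only as background for the holonomy problem. So there is no ``paper's own proof'' to compare against; I can only assess your proposal against the actual argument in the literature. Your necessity direction is essentially sound: the parity of the Euler number of the flat $\mathbb{CP}^1$-bundle is indeed the lifting obstruction, and the transversality of the developing section pins that Euler number to $\chi(S)$, which is even; the exclusion of elementary holonomy is a Gunning-type argument as you indicate. Your claim that the total branching order is congruent mod $2$ to the lifting obstruction is also correct and is genuinely the pivot of the sufficiency proof.

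The gaps are both in the sufficiency direction, and they are serious. (a) The existence of a suitable pants decomposition is not a ``generic position'' matter: the heart of Gallo--Kapovich--Marden's Part A is the theorem that any non-elementary representation admits a pants decomposition in which \emph{every} curve has loxodromic holonomy. This is a hard result, and it is precisely what lets them avoid the parabolic and elliptic boundary cases that you propose to ``treat separately'' --- a route that would multiply the case analysis rather than contain it. (b) Your second stage, cancelling branch points two at a time by debubbling, is not available as stated: bubbling inserts a cancellable pair of branch points, but an arbitrary pair of branch points on a branched structure need not bound a bubble, so the inverse move need not exist, and proving that one can always reach an unbranched structure by such surgeries is a movability problem comparable in difficulty to the theorem itself (it is the subject of much later work on moduli of branched projective structures). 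The actual proof sidesteps this entirely: each pair of pants with loxodromic boundary holonomy is shown to carry a projective structure with \emph{at most one} branch point of order two, the gluing is arranged so that the whole surface carries at most one branch point, and the mod-$2$ identity between branching order and the lifting obstruction then forces zero branch points whenever $\rho$ lifts to $\mathrm{SL}(2,\mathbb{C})$. Without replacing your cancellation scheme by this ``at most one branch point'' construction (or by a full movability theorem), the proposal does not close.
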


\noindent We also can also allow the projective structure to carry singular points which are locally branched projective coverings. Translation surfaces are particular cases of branched projective structures, whose holonomy lives in the subgroup of translations. In this case the holonomy problem is answered by Haupt's theorem. Now one can look at complex affine structures, which are $(\mathbb{C}, \mathrm{Aff}(\mathbb{C}))$-structures with branched points. 

\paragraph*{Complex (branched) affine structures, holonomy and open problems.}

A complex affine structure is defined to be a Riemann surface $S$ with an non constant holomorphic function 

$$ \mathrm{dev} : \tilde{S} \simeq \mathbb{H} \longrightarrow \mathbb{C} $$ \noindent equivariant with respect to a representation $\rho : \Gamma \longrightarrow \mathrm{Aff}(\mathbb{C})$. One can check that this definition is equivalent to the usual definition with charts and transition maps living in $\mathrm{Aff}(\mathbb{C})$. We ask the following question : which representation $\rho : \Gamma \longrightarrow \mathrm{Aff}(\mathbb{C})$ can be realized as the holonomy map of a branched complex affine structure ? A nice argument of Ehresmann popularized by Thurston ensures that the set of geometric holonomies (which are realized by a branched affine structure) is an open subset of the character variety. Another remark is that whenever a representation can be realized as a holonomy map, its entire orbit under the mapping class group action can also be realized as holonomy maps. Hence we have a nice corollary of Theorem \ref{principal} :

\begin{corollary}

The subset of $\chi(\Gamma, \mathrm{Aff}(\mathbb{C}) )$ consisting of representations which can be realized by a branched complex affine structure is an open set of full measure.

\end{corollary}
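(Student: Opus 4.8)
The plan is to combine the two facts recalled just above the statement — that the set $\mathcal{H} \subset \chi(\Gamma, \mathrm{Aff}(\mathbb{C}))$ of holonomies of branched complex affine structures is \emph{open} (by the Ehresmann--Thurston argument) and \emph{$\mathrm{Mod}(S)$-invariant} (by the orbit remark) — with the ergodicity of the mapping class group action established in Theorem \ref{principal}. The openness is exactly the first half of the assertion, so the entire remaining content is to show that $\mathcal{H}$ has full Lebesgue measure in the sense fixed in the Remark of Section~\ref{charactervariety}.

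First I would record the general principle that an ergodic action admits only two kinds of invariant measurable sets: null sets and conull sets. Since $\mathcal{H}$ is invariant under $\mathrm{Mod}(S)$, Theorem \ref{principal} forces $\mathcal{H}$ to be either of measure zero or of full measure. It therefore suffices to rule out the first possibility, i.e. to prove that $\mathcal{H}$ has positive measure.

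Second, because $\mathcal{H}$ is open, positivity of its measure follows as soon as $\mathcal{H}$ is nonempty: any nonempty open subset of the manifold $\chi$ has positive Lebesgue measure in every chart. Thus the proof reduces to producing a single branched complex affine structure on $S$ whose holonomy is non-abelian, so that it defines a genuine point of $\chi$ (which excludes abelian representations by definition). Here one can invoke the discussion of Euclidean characters in Section~6: the set of $\alpha \in \mathrm{H}^1(S,\mathbb{U}) \setminus \{1\}$ arising as the linear holonomy of a branched flat structure is all of $\mathrm{H}^1(S,\mathbb{U}) \setminus \{1\}$, so such structures exist in abundance, and one arranges the translation part of the developing map so that the resulting representation is non-abelian, producing a point of $\chi$ lying in $\mathcal{H}$.

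The hard part is precisely this existence step — exhibiting even one branched affine structure with non-abelian holonomy — since everything else is a purely formal consequence of ergodicity. This is however classical: starting from a Riemann surface structure on $S$ one builds a branched affine structure with prescribed nontrivial linear part by a standard grafting/surgery construction, and the realizability fact already used in Section~6 guarantees that every nontrivial $\alpha$ occurs geometrically. Once one such point is secured, openness yields positive measure and Theorem \ref{principal} upgrades this to full measure; together with the openness coming from Ehresmann--Thurston, this shows that $\mathcal{H}$ is an open set of full measure, completing the proof.
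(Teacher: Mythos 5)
Your proposal is correct and follows essentially the same route as the paper, which derives the corollary from exactly these three ingredients: openness of the set of geometric holonomies via the Ehresmann--Thurston argument, its $\mathrm{Mod}(S)$-invariance, and the ergodicity of Theorem \ref{principal}. The only difference is that you make explicit the nonemptiness step (producing one branched affine structure with non-abelian holonomy, using the realizability of every nontrivial unitary linear part from Section 6 and a perturbation), which the paper leaves tacit.
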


\noindent We give here a list of questions arising from the study of these affine structures which seem interesting to the author :

\begin{enumerate}

\item Characterize the representations which are the holonomy of a branched affine structure. 

\item Build explicit models realizing a given holonomy.  

\item Describe more precisely the action of the mapping class group on $\chi$ and $\chi_{\mathbb{U}}^+$. Does there exists an analogous theorem to Ratner's, or is it possible to find an orbit whose closure is not homogeneous  ?

\item Study the dynamics of the directional foliation in the case where the holonomy lies in $\mathbb{R}^* \ltimes \mathbb{C}$. Can phenomena different from those known in the case of translation surfaces happen ?

\item Study the family of representations of the Torelli group  $ \tau_{\alpha} : \mathcal{I}(S) \longrightarrow   \mathrm{PGL}(2g-2,\mathbb{C}) $. For which parameter  $\alpha$ is the image of the representation discrete ? When $\alpha$ is unitary, can one build this way lattices in $\mathrm{PU}(g-1,g-1)$ ? 

\item Explore the case where the singularities are arbitrary. 

\item Study the dynamics of the isoholonomic foliation of the moduli space of branched affine complex structures. Is it ergodic ?

\end{enumerate}

Recall that a strictly affine representation is a nonabelian representation which is not unitary and whose angles of linear parts generate an infinite subgroup of $\mathbb{R}/\mathbb{Z}$. About the holonomy problem, the following conjecture seems reasonable : 

\begin{conjecture}

 Every strictly affine representation is the holonomy of a branched affine structure.

\end{conjecture}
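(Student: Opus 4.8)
The plan is to recast the problem analytically and reduce it to a twisted version of Haupt's theorem. Fix a strictly affine representation $\rho$ with linear part $\alpha : \Gamma \longrightarrow \mathbb{C}^*$ and translation cocycle $\lambda \in Z^1_{\alpha}(\Gamma, \mathbb{C})$. A branched affine structure with holonomy $\rho$ is the same datum as a complex structure $X$ on $S$ together with a nonzero holomorphic $1$-form twisted by $\alpha$, that is a holomorphic section $\omega$ of $K_X \otimes L_\alpha$, where $L_\alpha$ is the flat line bundle with monodromy $\alpha$: indeed the developing map is $\mathrm{dev} = \int \omega$, its equivariance forces $\mathrm{dev}(\gamma \cdot x) = \alpha(\gamma)\mathrm{dev}(x) + \lambda(\gamma)$, and the zeros of $\omega$ are exactly the branch points. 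Thus $\rho$ is geometric if and only if there exists an $X$ for which the class $[\lambda] \in \mathrm{PH}^1_{\alpha}(\Gamma, \mathbb{C})$ lies in the image $\mathbb{P}H^0(X, K_X \otimes L_\alpha)$ of the holomorphic twisted forms. By Riemann-Roch (using that $L_\alpha^{-1}$ has no sections for $\alpha$ nontrivial) this image is a projective subspace of dimension $g-2$ inside $\mathrm{PH}^1_{\alpha}(\Gamma, \mathbb{C}) \simeq \mathbb{CP}^{2g-3}$, and as $X$ ranges over Teichmüller space these subspaces sweep out the geometric locus in the fibre over $\alpha$.

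The core of the argument is then a surjectivity statement: for $\alpha$ non-unitary, the union over all complex structures $X$ of the subspaces $\mathbb{P}H^0(X, K_X \otimes L_\alpha)$ is all of $\mathrm{PH}^1_{\alpha}(\Gamma, \mathbb{C})$. This is a twisted analogue of \emph{Haupt's theorem}, to which the untwisted case $\alpha = 1$ reduces. A dimension count is encouraging: the space of pairs $(X, [\omega])$ has real dimension $(6g-6) + 2(g-2) = 8g-10$, comfortably exceeding $\dim_{\mathbb{R}} \mathbb{CP}^{2g-3} = 4g-6$ for $g \geq 2$, so nothing dimensional obstructs surjectivity. I would attack it in two complementary ways. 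The soft route is dynamical: the geometric locus is open, $\mathrm{Mod}(S)$-invariant and, by Theorem \ref{principal} together with Ehresmann-Thurston openness, of full measure; if one could upgrade the ergodicity of the Torelli action on the fibres to genuine minimality on the strictly affine part, a nonempty open invariant set would be forced to fill the fibre. The hands-on route, which I expect to be decisive, is to build the required structures explicitly: decompose $S$ along separating curves, realise prescribed linear holonomy on the pieces by explicit affine models, control the branch points through Gauss-Bonnet, and glue so that the twisted periods realise the target class; the non-unitarity of $\alpha$ is exactly what supplies the gluing freedom that is unavailable in the Euclidean case.

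The main obstacle, and the very reason the conjecture restricts to strictly affine representations, is that the twisted period map fails to be surjective when $\alpha$ is unitary. In that case the volume defines a nondegenerate Hermitian form $\wedge_{\alpha}$ of signature $(g-1,g-1)$ on $\mathrm{H}^1_{\alpha}(\Gamma, \mathbb{C})$, branched Euclidean structures have strictly positive volume, and so the geometric locus is confined to the positive cone of $\wedge_{\alpha}$, a proper invariant open subset. For strictly affine $\rho$ no such invariant Hermitian obstruction can survive, in keeping with the Corollary that no absolutely continuous invariant measure exists; the real task is to prove rigorously that this obstruction genuinely disappears off the unitary locus rather than merely degenerating. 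That forces one to control the twisted period map uniformly as $\alpha$ tends to the unitary and almost-real loci, where the Hodge structure on $\mathrm{H}^1_{\alpha}(\Gamma, \mathbb{C})$ becomes singular. Establishing this uniform non-degeneracy of the twisted Hodge structure for non-unitary multipliers is where I expect the essential difficulty to concentrate.
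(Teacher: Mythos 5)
This statement is stated in the paper as a \emph{conjecture}, listed among the open problems; the paper contains no proof of it, and what you have written is not a proof either but a research programme. Your reduction is sensible and consistent with the paper's framework: a branched affine structure with holonomy $(\alpha,\lambda)$ is equivalent to a complex structure $X$ together with a section of $K_X\otimes L_\alpha$ whose twisted periods represent $[\lambda]$, so the conjecture is exactly the surjectivity of the union over Teichm\"uller space of the projective subspaces $\mathbb{P}H^0(X,K_X\otimes L_\alpha)$ onto the strictly affine locus. But you then label this surjectivity ``the core of the argument'' and do not establish it. A dimension count showing that $8g-10\geq 4g-6$ rules out only the crudest obstruction; it says nothing about whether the period map actually covers every class, which is precisely where Haupt's theorem (in the untwisted case) required genuine work and produced an exceptional locus.

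Both of your proposed routes have concrete gaps. The ``soft'' route needs minimality, or at least that every $\mathrm{Mod}(S)$-orbit in the strictly affine locus meets the open geometric set; the paper proves only ergodicity, which yields the already-established corollary that the geometric locus has full measure, and it explicitly lists the finer topological dynamics of this action (orbit closures, Ratner-type statements) as an open question. Upgrading ergodicity to minimality is not a routine step and is nowhere justified in your text. The ``hands-on'' route --- decomposing $S$, building affine models on the pieces, and gluing to hit a prescribed twisted period class --- is stated as a plan with no construction carried out; the claim that non-unitarity of $\alpha$ ``supplies the gluing freedom'' is an expectation, not an argument. Finally, note that the paper's notion of strictly affine excludes not only unitary but also almost-real and abelian representations, so your obstruction analysis would also have to rule out residual invariants along the almost-real locus, which you acknowledge only as a difficulty ``where the essential work concentrates.'' As it stands, the proposal identifies the right target and the right known obstructions, but proves nothing beyond what the paper already establishes (openness and full measure of the geometric locus).
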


\bibliographystyle{alpha}
\bibliography{biblio}

\end{document}